\def\1{1\!{\rm l}}
\newcommand{\geqa}{\gtrsim}
\newcommand{\EM}{\ensuremath}
\newcommand{\al}{\alpha}
\newcommand{\ga}{\gamma}
\newcommand{\veps}{\varepsilon}
\newcommand{\cB}{\EM{\mathcal{B}}}
\newcommand{\cC}{\EM{\mathcal{C}}}
\newcommand{\cD}{\EM{\mathcal{D}}}
\newcommand{\cE}{\EM{\mathcal{E}}}
\newcommand{\cF}{\EM{\mathcal{F}}}
\newcommand{\cL}{\EM{\mathcal{L}}}
\newcommand{\cM}{\EM{\mathcal{M}}}
\newcommand{\cN}{\EM{\mathcal{N}}}
\newcommand{\cS}{\EM{\mathcal{S}}}
\newcommand{\cT}{\EM{\mathcal{T}}}
\newcommand{\psg}{{\langle}}
\newcommand{\psd}{{\rangle}}
\definecolor{blendedblue}{rgb}{0.2,0.2,0.7}
\DeclareMathAlphabet{\mathpzc}{OT1}{pzc}{m}{it}
\newcommand{\noi}{\noindent}
\newcommand{\given}{\,|\,}
\newcommand{\rn}{\sqrt{n}}
\newcommand{\bi}{\begin{enumerate}[label=\roman*)]}
\newcommand{\ei}{\end{enumerate}}
\newcommand{\ba}{\begin{array}{rcl}}
\newcommand{\ea}{\end{array}}
\newcommand{\di}{\displaystyle}
\def\bT{\mathbb{T}}
\newcommand{\norm}[1]{\left\lVert#1\right\rVert}
\DeclareMathOperator*{\argmin}{arg\,min}
\tikzstyle{vertex}=[draw,fill=black!15,circle,minimum size=20pt,inner sep=0pt]
\setlist[enumerate]{leftmargin=.5in}
\setlist[itemize]{leftmargin=.5in}
\crefname{hypothesis}{Hypothesis}{Hypotheses}
\title{Optional P\'olya trees: posterior rates and uncertainty quantification\thanks{Submitted to the editors October 2021.
\funding{This work was funded by the ANR, project ANR-17-CE40-0001 (BASICS).}}}
\author{Isma\"el Castillo\thanks{LPSM Sorbonne \& IUF 
  (\email{ismael.castillo@upmc.fr}, \url{https://www.lpsm.paris/pageperso/castillo//}).}
\and Thibault Randrianarisoa \thanks{LPSM Sorbonne 
  (\email{thibault.randrianarisoa@sorbonne-universite.fr}, \url{https://thibaultrandrianarisoa.netlify.app}).}}
\DeclareMathOperator{\diag}{diag}
\begin{document}

\maketitle

\begin{abstract}
  We consider statistical inference in the density estimation model using a tree--based Bayesian approach, with Optional P\'olya trees as prior distribution.  We derive near-optimal convergence rates for corresponding posterior distributions with respect to the supremum norm. For broad classes of H\"older--smooth densities, we show that the method automatically adapts to the unknown H\"older regularity parameter. We consider the question of uncertainty quantification by providing mathematical guarantees for credible sets from the obtained posterior distributions, leading to near--optimal uncertainty quantification for the density function, as well as related functionals such as the cumulative distribution function. The results are illustrated through a brief simulation study.
\end{abstract}

\begin{keywords}
 Bayesian nonparametrics, P\'olya trees, posterior convergence rates, supremum norm, uncertainty quantification, frequentist coverage of credible sets
\end{keywords}

\begin{AMS}
  62G20, 62G07, 62G05
\end{AMS}

\section{Introduction}
Tree--based methods are among the most broadly used algorithms in statistics and machine learning. This goes from single tree algorithms such as CART \cite{breiman_book} or Bayesian CART \cite{bcart1, bcart2}, to the use of random forests \cite{reviewbiau, bart}, that is ensembles of trees. Due in particular to their ability to quantify uncertainty, there has been much interest in Bayesian tree--based methods.  While for frequentist methods there is a by now well--established theory in quadratic loss for CART and related algorithms, advances on the mathematical understanding of Bayesian counterparts are very recent. In \cite{rvdp20, lineroyang18}, $L^2$--posterior contraction rates are obtained for both trees and forests in a regression setting. Still in regression, the work \cite{cr21} addresses the case of the stronger supremum norm loss for Bayesian CART--type priors. The present paper can be seen as a continuation of \cite{cr21}, investigating the density estimation setting. In Bayesian density estimation, a classical tree--method is that of P\'olya trees (henceforth PTs, see e.g. \cite{gv17}, Chapter 3).  For well--chosen parameters, PTs' samples are random densities, and 
contraction rates for the corresponding posterior densities have been obtained in \cite{c17}. The idea behind P\'olya tree is to grow a fixed, infinite, tree; this is typically not flexible enough to address refined statistical goals such as adaptation. 
Notably, Wong and Ma introduced in \cite{wm10} a flexible alternative to standard PTs that they call Optional P\'olya Trees (OPTs in the sequel), which have been successfully extended and applied to a number of settings in e.g. \cite{mw11, wong16, ma17, wongnips17, cma20}. Yet, from the theoretical point of view, only posterior consistency was established in \cite{wm10} and follow-up works. Not based on (flexible) trees, we also note the different construction of spike--and--slab P\'olya trees introduced in  \cite{cm21}. 
  
There are two main goals in the present paper. The first is to continue the investigations of \cite{cr21} for tree--methods in order to obtain inference in the practically very desirable supremum norm loss, but in the model of density estimation, and the second to elaborate a theory for rates and uncertainty quantification (henceforth, UQ) for Optional P\'olya Trees. In fact, our methods enable to cover also more general priors, although for simplicity we will mostly stick to OPTs in this work. We now briefly review a number of related results. While the use of a general theory based on prior mass and testing \cite{ggv00, gv17} made a relatively broad $L^2$--theory possible \cite{lineroyang18, rvdp20}, results for the supremum norm are typically more delicate, as uniform testing rates required in \cite{ggv00} appear to be slower \cite{gn11}. Recent advances on this front include \cite{c14, hoffmannetal15, nicklray20, nauletpreprint, yoo2016}. The first supremum norm posterior rates for tree methods, optimal up to a logarithmic factor, were obtained in \cite{cr21} in regression models; we refer to \cite{cr21} for more context and references on rates for tree--based methods. 


The main results of the paper are as follows
\begin{enumerate}
\item we prove that Optional P\'olya Trees (OPTs) achieve optimal supremum--norm posterior contraction rates (up to a logarithmic factor) in density estimation: this provides an optimal rate--theory for the consistency results of \cite{wm10}, who introduced the OPT prior, for the computationally efficient case of dyadic splits.
\item we show that tree--based inference with OPTs leads to (near--) optimal uncertainty quantification in terms of confidence bands, both for the density $f$ and the distribution function $F=\int_0^\cdot f$, in an adaptive way. 
\end{enumerate}
Those constitute the first results, to the best of our knowledge, showing that tree--based methods in density estimation lead to near--optimal uncertainty quantification in terms of the supremum norm. Apart from making the consistency results of \cite{wm10} precise, this work shows that the programme for inference with tree--priors outlined in \cite{cr21}, who considered regression settings only, carries over to density estimation; the techniques presented could also be used for other tree priors beyond OPTs.

The paper is organized as follows. Section \ref{sec:prior} introduces a class of tree--based priors on density functions, of which OPTs are a special case.   Section \ref{sec:rates} states our main result on tree--based supremum norm contraction, while Section \ref{sec:uq} focuses on Uncertainty Quantification, both for the density function and smooth functionals thereof. Section \ref{sec:sims} illustrates our findings numerically through a simulation study. Section \ref{sec:disc} briefly summarises and discusses the results and future research directions. Proofs are gathered in  Section \ref{sec:proofs} and the Appendix.


\section{Dyadic tree--based random densities and Optional P\'olya trees (OPTs)}
 \label{sec:prior}

\subsection{Bayesian framework}

Adopting a Bayesian point of view, the density estimation model on $[0,1)$ consists in observing 
\begin{align}\label{def:bayesmod}
\begin{split}
 X=(X_1,\ldots,X_n)  \given f & \sim P_f^{\otimes n}\\
 f & \sim \Pi,
\end{split}
\end{align} 
where $P_f$ is the distribution on $[0,1)$ with density $f$ with respect to Lebesgue measure: $dP_f = f d\mu$, and  where  $\Pi$ is a prior distribution on densities $f$ to be defined below. The posterior distribution is then the conditional distribution of $f$ given $X$ and is denoted $\Pi[\cdot\given X]$. 

{\em Frequentist analysis of Bayesian posteriors.} To analyse mathematically the behaviour of the posterior distribution $\Pi[\cdot\given X]$, once the posterior is formed using the Bayesian model, we make the frequentist assumption that the data $X$ has actually been generated from a `true' parameter value $f_0$, that is, in the density estimation setting, $X\sim P_{f_0}^{\otimes n}$. In the sequel, we thus study the behaviour of $\Pi[\cdot\given X]$ in probability under $P_{f_0}=P_{f_0}^{\otimes n}$. For more details and context, we refer the reader to the book \cite{gv17}. 

Motivated by recent work \cite{cr21}  on Bayesian CART in regression settings (see e.g. the discussion in Section 5 of \cite{cr21}), we introduce a family of tree-based prior distributions on density functions. For simplicity, we mostly consider the case of densities on the unit interval, but our results could be extended to higher dimensions up to using slightly more complex notation, which we refrain to do here -- see, though, the discussion in Section \ref{sec:disc} for more on this --.

{\em Informal prior description.} 
 The prior on densities is defined in three steps, which will be more formally introduced below
\begin{enumerate}
\item[{\em Step 1}] a random tree $\cT$ is sampled from a prior $\Pi_\bT$ on trees;
\item[{\em Step 2}] given $\cT$, a partition $I_{\cT}$ of the unit interval is produced, built recursively in a tree fashion `along' $\cT$ with breakpoints placed at  midpoints of the successive intervals; 
\item[{\em Step 3}] given $I_{\cT}$, the output density $f$ is a histogram with random heights whose distribution follows a P\'olya tree--type law.
\end{enumerate}

\subsection{Priors $\Pi_\bT$ on full binary trees}
\begin{definition} \label{def:tree}
A {\em full binary tree} is a set of nodes $\mathcal{T}=\left\{(l,k),\ l \geq0,\ 0\leq k \leq 2^{l}-1 \right\}$ verifying the condition 
\[
(l,k)\in \mathcal{T} \implies \text{ if } l>0,\ \Big(l-1,\left\lfloor k/2 \right\rfloor\Big)\in \mathcal{T} \text{ and } \left(l,k+(-1)^k\right)\in \mathcal{T}.
\]
One then says that $\Big(l-1,\left\lfloor k/2 \right\rfloor\Big)$ is the {\em parent} node of its {\em children} $(l,k)$ and $ \left(l,k+(-1)^k\right)$, and a node with no children is called an external node or leaf; $(0,0)$ belongs to every non-empty tree and is called the tree {\em root}. We denote by $\mathcal{T}_{\text{int}}$ the set of non-terminal -- or `internal' -- nodes in $\mathcal{T}$ (i.e. those with children), and $\mathcal{T}_{\text{ext}}=\mathcal{T}\setminus \mathcal{T}_{\text{int}}$ the set of `leaves' -- also called `external' nodes --.
\end{definition}
The parent-child relationship of the pairs in a tree gives rise to the tree representation depicted on Figure \ref{fig: tree_plot_pairs}. This justifies the following terminology as we define the {\em depth} of $\cT$ as the integer \[d(\mathcal{T})\coloneqq\underset{(l,k)\in\mathcal{T}}{\max}l.\]
One further denotes by $\mathbb{T}$ the set of all binary trees and, putting a slight restriction on the maximum depth, 
\begin{equation} \label{ellmax}
\mathbb{T}_n\coloneqq\left\{\mathcal{T}\in\mathbb{T}:\ d(\mathcal{T}) \leq L_{\text{max}}\right\},\qquad
\text{with } 
L_{\text{max}}\coloneqq \Big \lfloor \log_2\left( n/\log^2(n) \right) \Big \rfloor.
\end{equation}
The prior distributions considered below put mass $1$ to the subset $\mathbb{T}_n$ of $\bT$.


\begin{figure}[!h]
\centering
     \begin{subfigure}[b]{0.45\textwidth}
        \centering
        \begin{tikzpicture}[
        very thick,
        level 1/.style={sibling distance=3cm},
        level 2/.style={sibling distance=2.5cm},
        level 3/.style={sibling distance=1cm},
        every node/.style={circle,solid, draw=black,thin, minimum size = 0.5cm},
        emph/.style={edge from parent/.style={dashed,black,thin,draw}},
        norm/.style={edge from parent/.style={solid,black,thin,draw}}
        ]
            \node [dotted] (r){$(0,0)$}
            	child {
            		node [dotted] (a) {$(1,0)$}
            		edge from parent node[left, draw=none]{}
            	}
            	child {
            		node [dotted] {$(1,1)$}
            		child {
            			node [dotted] {$(2,2)$}
            			edge from parent node[left, draw=none]{}
            		}
            		child {
            			node [dotted] {$(2,3)$}
            			edge from parent node[right, draw=none]{}
            		}
            		edge from parent node[right, draw=none]{}
            	};
        \end{tikzpicture}
        \caption{Tree pairs.}
        \label{fig: tree_plot_pairs}
     \end{subfigure}
     \hfill
     \begin{subfigure}[b]{0.45\textwidth}
         \centering
	\begin{tikzpicture}[
	very thick,
	level 1/.style={sibling distance=3cm},
	level 2/.style={sibling distance=3cm},
	level 3/.style={sibling distance=0.6cm},
	every node/.style={circle,solid, draw=black,thin, minimum size = 0.5cm},
	emph/.style={edge from parent/.style={dashed,black,thin,draw}},
	norm/.style={edge from parent/.style={solid,black,thin,draw}}
	]
		\node [rectangle] (r){$I_{00}=[0;1)$}
			child {
				node [rectangle] (a) {$I_{10}=[0;1/2)$}
				edge from parent node[left, draw=none]{}
			}
			child {
				node [rectangle] {$I_{11}=[1/2;1)$}
				child {
					node [rectangle] {$I_{22}=[1/2;1/4)$}
					edge from parent node[left, draw=none]{}
				}
    				child {
    					node [rectangle] {$I_{23}=[1/4;1)$}
    					edge from parent node[right, draw=none]{}
    				}
    				edge from parent node[right, draw=none]{}
			};
\end{tikzpicture}

         \caption{Tree partitioning $I_{\cT}$.}
         \label{fig: tree_plot_partitioning}
     \end{subfigure}
\caption{Tree $\mathcal{T}=\left\{(0,0), (1,0), (1,1), (2,2),(2,3)\right\}.$}
\label{fig: tree_plot_1}
\end{figure}
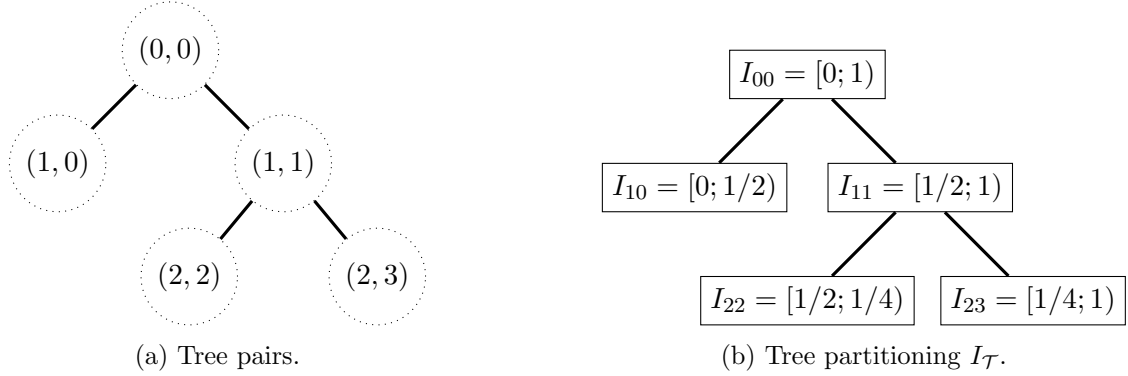

Next we give two examples of priors $\Pi_\bT$ on full binary trees. Both are actually considered in actual Bayesian CART implementations \cite{bcart1, bcart2}. 
\begin{example}[$\operatorname{GW}(p)$ Markov process on tree] \label{ex:gw}
A random tree is recursively defined by the following process. First, let us attribute to each possible pair $(l,k)$ a deterministic parameter $p_{lk}\in [0,1]$. Starting at the root node $(0,0)$, either the tree with only $(0,0)$ as node is returned with probability $1-p_{00}$, or there is a split and the tree contains not only $(0,0)$ but at least also $(1,0)$ and $(0,1)$. The construction process then continues recursively until either there are no further nodes to split, or a maximum depth $L_{max}$ is reached, after which (i.e. for $l\geq L_{max}$) we do not further grow the tree. More precisely, the recursion is from up to down ($l$ grows) and left to right ($k$ grows), as follows: given the tree contains $(l,k)$,  with probability $1-p_{lk}$ the node $(l,k)$ is a leaf; and with probability $p_{lk}$, the tree further has a split at $(l,k)$, i.e. the node $(l,k)$ has $(l+1,2k)$ and $(l+1,2k+1)$ as children in the tree.

The process producing such a random tree $\cT$ is Markov (along the complete dyadic tree) in the sense that the probability that a node $(l,k)$ further splits only depends on the fact that the node is present or not and on the parameter $p_{lk}$, but not on the rest of the tree built so far (above and to the left of $(l,k)$). By analogy to Galton--Watson processes, with here nodes having either two or zero  children with probabilities $p_{lk}$ and $1-p_{lk}$ respectively, we call  $\Pi_\bT$ as above a $\operatorname{GW}(p)$ prior, with parameters $p=(p_{lk})=(p_\epsilon)$ (we define the link between $\epsilon$  and $(l,k)$ below, in Section \ref{subsec: Partitioning}), $p_{L_{\text{max}}k}=0$.
\end{example}

\begin{example}[Conditioning on the number of leaves] \label{ex:cond}
In this construction, one samples first a number $K$ of leaves according to a prior on integers and given $K$ one then samples uniformly from the set of all full binary trees with $K$ leaves and depth at most $L_{max}$.
\end{example}

\subsection{Partitioning $I_{\cT}$}
\label{subsec: Partitioning}

Let us first introduce notation on dyadic numbers and intervals. For any binary sequence $\epsilon\in \{0,1\}^l$, its length is $|\epsilon|=l>0$. For any  dyadic number $r=k/2^l$ in $[0,1)$ with $0\leq k<2^l,\ l>0$, one writes $\epsilon(k,l)=\epsilon_1(r)\cdots\epsilon_l(r)\in \{0,1\}^l$, such that $r=\sum_{k=1}^l\epsilon_k(r)2^{-k}$, its unique decomposition in base $2^{-1}$ with $|\epsilon|=l$.  Accordingly, one introduces the dyadic intervals, for $\epsilon=\epsilon(k,l)$,
 \[ I_\epsilon\coloneqq I_{lk}\coloneqq \left[\frac{k}{2^l}, \frac{k+1}{2^l}\right), \]  
 and one sets  $I_\varnothing=I_{0,0}=[0,1)$. In addition, for any $\epsilon$ and $0< i\leq |\epsilon|$, one writes $\epsilon^{[i]}=\epsilon_1\dots\epsilon_i$. 
Also, we introduce $\mathcal{E}^*=\cup_{l=0}^\infty \left\{0;1\right\}^l$ where $\left\{0;1\right\}^0=\{\varnothing\}$.\\

To each full binary tree encoded as above as the collection of its nodes $(l,k)$, we associate a partition $I_\cT$ of the unit interval given by, with $\cT_{ext}$ the external  nodes of $\cT$ as in Definition \ref{def:tree},
\[ [0,1) = \bigcup_{(l,k)\in \cT_{ext} } I_{lk}. \]
Such a tree-based recursive partitioning of $[0,1)$ is illustrated on Figure \ref{fig: tree_plot_partitioning}. The deeper the tree locally, the more refined the corresponding partition becomes. By definition of $I_{lk}$, note that the partition has split-points  at dyadic numbers. The final partition $I_\cT$ can also be seen as being obtained from recursively splitting $[0,1)$ in halves, continuing to split locally only if the tree continues further down at that location. For this reason we talk about {\em splitting at midpoints}. Note that, still using full binary trees $\cT$, one could make splits at a different, possibly random, location. Although this makes the construction even more flexible, we shall not consider this here for simplicity (we note in passing that computationally the split--at--midpoint construction appears often to be among the easiest to simulate from, as it does not require to draw split locations; we refer to \cite{cr21}, Section 4, for more on `unbalanced' splits).

\subsection{Prior values given tree and partitioning} Once a tree $\cT$ and partitioning $I_{\cT}$ are given, we draw a random histogram over the partition given by $I_{\cT}$ by sampling heights over each sub-interval in such a way that the overall histogram is a positive density $f$ (i.e. $f>0$ and $\int_0^1 f=1$). To do so, we use a mass--splitting process along the tree $\cT$, which actually coincides with that of P\'olya trees -- we refer to the Appendix \ref{section: polya tree variants} for more on those --. This choice is for simplicity but we could consider other choices too (in this vein, the $\operatorname{Beta}(a,a)$ law at the end of Definition \ref{def-prior}  could be taken to depend on $(l,k)$ or be a different distribution).

\begin{definition}[Prior $\Pi$] \label{def-prior}
Let $\Pi_{\bT}$ be a prior on full binary trees. Let $(Y_\veps)$ be a sequence of independent variables of distribution $\operatorname{Beta}(a_{\veps0},a_{\veps1})$, for some $a_{\veps0}, a_{\veps1} \in [0,1]$, indexed by $\veps\in\cE^*$ .  The prior $\Pi$ draws a random tree--based histogram $f$ as follows
\begin{align}
\cT & \sim \Pi_{\bT} \\
f \given \cT & \sim 
 \sum_{\veps\equiv (l,k)\in \cT_{ext}} h_\veps \1_{I_{lk}}, \qquad \text{with }\
h_\veps = 2^l \prod_{i=1}^{l} Y_{\veps^{[i]}}.
\end{align}
The distribution $f\given \cT=T$ for a given $T\in\bT$ is called a $T$--P\'olya tree with parameters $(a_{\veps})$. In the sequel we set $a_{\veps}=a$  for some fixed $a>0$, in which case the distribution is denoted as $\operatorname{T--PT}(a)$. 
\end{definition}

It results from the definition that the overall prior $\Pi$ is a mixture of $T$--P\'olya trees. When the mixing distribution $\Pi_\bT$ is a $\operatorname{GW}(p)$ prior, it turns out that $\Pi$ coincides with Optional P\'olya trees introduced in \cite{wm10}, in the case of splits at midpoints. 
\begin{prop}\label{opt}
Let $\Pi$ be the mixture distribution induced on densities $f$ constructed as
\begin{align*}
\cT & \sim \operatorname{GW}(p) \\
f\given \cT & \sim \operatorname{\cT--PT}(a).
\end{align*}
Then $\Pi$ coincides with the Optional P\'olya tree of \cite{wm10} corresponding to the recursive partitioning $\left\{I_\epsilon,\ \epsilon\in\mathcal{E}^*\right\}$ with splits at midpoints and parameters $M(I_\epsilon)=\lambda(I_\epsilon)=1, K_1(I_\epsilon)=2$, stopping probabilities $\rho(I_\epsilon)=1-p_\epsilon$  for any $\epsilon\in\mathcal{E}^*$ and parameters for mass allocation $\alpha_1^1=\alpha_1^2=a$.
\end{prop}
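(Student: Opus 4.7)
The plan is to verify directly that the two-stage mixture of Definition \ref{def-prior} with $\Pi_\bT = \operatorname{GW}(p)$ reproduces, step for step, the recursive sampling scheme defining the OPT of \cite{wm10} with the stated parameters. Since both constructions can be presented as recursive algorithms that produce a random probability measure on $[0,1)$, the proof reduces to a ``dictionary-style'' matching of the two descriptions, which I would split into three clean checks.

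First, I would unroll the OPT sampling with the parameters $M(I_\epsilon)=\lambda(I_\epsilon)=1$, $K_1(I_\epsilon)=2$, $\rho(I_\epsilon)=1-p_\epsilon$, and $\alpha_1^1=\alpha_1^2=a$. At each $I_\epsilon$, and mutually independently across $\epsilon \in \cE^*$, one draws $S_\epsilon \sim \bel(p_\epsilon)$ and $W_\epsilon \sim \bet(a,a)$: if $S_\epsilon = 0$ the current mass on $I_\epsilon$ is laid down as a uniform density on $I_\epsilon$ (since the base measure $\lambda$ is Lebesgue and $M(I_\epsilon)=1$); if $S_\epsilon = 1$, the interval is cut at its midpoint into $I_{\epsilon 0}$ and $I_{\epsilon 1}$, fraction $W_\epsilon$ of the mass is sent to $I_{\epsilon 0}$ and $1-W_\epsilon$ to $I_{\epsilon 1}$, and the recursion is applied to each half.

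Second, I would identify the tree skeleton. Let $\cT$ be the set of nodes visited by this recursion, i.e.\ $\varnothing \in \cT$ and $\epsilon \in \cT$ iff $S_\eta = 1$ for every strict ancestor $\eta$ of $\epsilon$. By the independence and Bernoulli$(p_\eta)$ law of the $S_\eta$'s, $\cT$ is precisely a $\operatorname{GW}(p)$ random tree in the sense of Example \ref{ex:gw}; and since $\cT$ is a measurable function of the $S$-variables only, it is independent of $(W_\eta)_\eta$. Third, I would read off $f\given\cT$. By the OPT recursion, $f$ is constant on each $I_\epsilon$ with $\epsilon \in \cT_{\text{ext}}$, with value equal to the total mass allocated to $I_\epsilon$ divided by $|I_\epsilon| = 2^{-|\epsilon|}$; that mass is the product of the $\bet(a,a)$ factors collected along the path $\varnothing \to \epsilon^{[1]} \to \cdots \to \epsilon$ (each factor being $W_{\epsilon^{[i-1]}}$ or $1-W_{\epsilon^{[i-1]}}$ depending on whether $\epsilon_i = 0$ or $1$). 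Using that $W \sim \bet(a,a)$ is symmetric under $w \mapsto 1-w$, these path factors can be relabelled into the independent family $(Y_\eta)_{\eta \in \cE^*}$ of Definition \ref{def-prior}, yielding exactly $h_\epsilon = 2^{|\epsilon|} \prod_{i=1}^{|\epsilon|} Y_{\epsilon^{[i]}}$ and hence $f\given\cT \sim \operatorname{\cT--PT}(a)$.

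There is no serious mathematical obstacle here; the main care goes into the bookkeeping of the path labelling, making sure that the ``left/right'' Beta factors at each internal split correspond correctly to the independent $\bet(a,a)$ variables of Definition \ref{def-prior}, and in checking that the depth cap at $L_{\max}$ is innocuous on both sides (it matches $p_{L_{\max}k}=0$ with $\rho(I_\epsilon)=1$ at $|\epsilon|=L_{\max}$). Combining the second and third checks produces equal joint laws for $(\cT, f)$ under the two constructions, and therefore equal marginal laws for $f$, proving the proposition.
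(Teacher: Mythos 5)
Your proposal is correct and follows essentially the same dictionary-style matching that the paper itself uses in Appendix B: unroll the OPT recursion with the stated parameters, identify the visited-node set as a $\operatorname{GW}(p)$ tree (independent of the Dirichlet/Beta draws), and read off the conditional law of $f$ given $\cT$ as the $\cT$--P\'olya-tree mass allocation, with the depth cap matching $p_{L_{\max}k}=0 \Leftrightarrow \rho(I_\epsilon)=1$ at $|\epsilon|=L_{\max}$. Your explicit remark that $\cT$ is independent of the $(W_\eta)$ family, and your care with the left/right path-factor bookkeeping via the $\operatorname{Beta}(a,a)$ symmetry, are small additions the paper leaves implicit but do not change the argument.
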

The proof of Proposition \ref{opt} is presented in Appendix \ref{sec: link_OPT_GW}. Our notation differs slightly from \cite{wm10} (which does not make the tree connection) for two reasons: first, the tree--setting enables one to use the framework of \cite{cr21} and second, although in what follows we stick to OPTs for simplicity, the same proofs  work nearly unmodified for other tree--priors, such as the one in Example \ref{ex:cond}.

\subsection{Posterior distribution}
Let us recall that the prior $\Pi$ in Definition \ref{def-prior} is the mixture   
\begin{align}\label{def:priorgen}
\begin{split}
\cT & \sim \Pi_\bT \\                                                    
f\given \cT & \sim \Pi(\cdot\given \cT),                      
\end{split}
\end{align} 
where $\Pi(\cdot\given \cT)$ is, given $\cT$, a $\cT$--P\'olya tree. For a given dyadic interval $I$, let $N_X(I)$ denote the number of points $X_i$ that fall in $I$. The next result is proved in Appendix \ref{sec: tree_GW_posterior}.
\begin{prop}[Posterior given $\cT$] \label{prop1}
Suppose the prior is given by \eqref{def:priorgen}, where the prior given $\cT$ is a $\cT$--P\'olya tree with parameters $(a_\veps)$.  Then, in the density estimation model \eqref{def:bayesmod}, the posterior $\Pi[\cdot\given X, \cT]$ is a $\cT$--P\'olya tree with parameters $(a_\veps^X)$ given by, for any $\veps \in \cE^*$,
\[ a_\veps^X = a_\veps + N_X(I_\veps).\]
\end{prop}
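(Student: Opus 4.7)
The plan is to apply Bayes' formula directly on the fixed--tree prior, exploiting the standard Beta--Binomial conjugacy of the mass--splitting process underlying P\'olya trees. Since we condition on $\cT$, the partition $I_\cT$ is deterministic, and the tree prior $\Pi_\bT$ plays no role at this step: only the independent Beta variables attached to the internal nodes of $\cT$ enter the distribution of $f \given \cT$.

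First I would write the likelihood as a function of these Beta variables. Each observation $X_i$ lies in exactly one leaf cell $I_\veps$ with $\veps \in \cT_{\text{ext}}$, on which the piecewise--constant density $f$ equals $h_\veps$, so that
\[
L(X \given f, \cT) \;=\; \prod_{i=1}^n f(X_i) \;=\; \prod_{\veps \in \cT_{\text{ext}}} h_\veps^{N_X(I_\veps)}.
\]
Substituting the tree--multiplicative expression of $h_\veps$ and swapping the product over leaves with the product along their root--to--leaf paths, the Beta factor attached to an internal node $\eta \in \cT_{\text{int}}$ collects an exponent equal to the total count $N_X(I_{\eta 0})$ (resp.\ $N_X(I_{\eta 1})$) in its left (resp.\ right) sub--cell: this is a purely combinatorial bookkeeping step, relying on the additivity of $N_X$ over the leaves located below $I_{\eta 0}$ and $I_{\eta 1}$. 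Up to a data--dependent but $f$--independent constant, the likelihood factorises thereby as a product over $\eta \in \cT_{\text{int}}$ of Beta--kernel factors.

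The conclusion then follows by multiplying this factorised likelihood with the prior density of the Beta variables, namely a product of independent $\bet(a_{\eta 0}, a_{\eta 1})$ densities over $\eta \in \cT_{\text{int}}$, and invoking the standard Beta--Binomial conjugacy node by node and independently across nodes. The posterior is a product of independent $\bet(a_{\eta 0} + N_X(I_{\eta 0}),\, a_{\eta 1} + N_X(I_{\eta 1}))$ laws, which in the indexing of Definition \ref{def-prior} exactly rewrites as the announced update $a_\veps^X = a_\veps + N_X(I_\veps)$ for every $\veps \in \cE^*$. There is no genuine obstacle: the result is the usual P\'olya tree conjugacy frozen along the given tree $\cT$, and the only care needed is the path--product rearrangement identifying the exponent of each Beta with the counts on the corresponding children cells.
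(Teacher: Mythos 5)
Your argument is correct and follows essentially the same route as the paper's proof in Appendix C: write the likelihood as $\prod_{\veps\in\cT_\text{ext}} h_\veps^{N_X(I_\veps)}$, use the telescoping path--product form of $h_\veps$ together with additivity of the counts over descendant leaves to regroup into a product over internal nodes of Beta kernels with exponents $N_X(I_{\eta 0})$, $N_X(I_{\eta 1})$, and then apply Beta--Binomial conjugacy node by node. No gaps; this matches the paper's derivation.
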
 
Let us now move on to describe the posterior induced on trees. We denote                                          
\begin{equation}\label{ntx}                                    
N_T(X)= \int \prod_{i=1}^n f(X_i) d\Pi(f\given \cT=T)        
\end{equation}
the marginal distribution of $X$ given $\cT=T$. It follows from Bayes' formula that $\Pi[\cdot\given X]$ induces a posterior distribution on trees given as:            
for any $T\in\bT$, and $N_T(X)$ as in \eqref{ntx},
\begin{equation} \label{post:trees}
 \Pi[\cT=T \given X] = \frac{\Pi_\bT[\cT=T] N_T(X)}{\displaystyle \sum_{T\in\bT }
\Pi_\bT[\cT=T] N_T(X)}.
\end{equation}
This is in general a fairly complicated distribution with no closed--form expression. In case the prior $\Pi_\bT$ on trees is GW$(p)$, it turns out that the posterior on trees is  GW$(p^X)$ for updated parameters $p^X$. Let, for $a>0$,
\begin{equation}\label{def:nux}
\nu_\veps^X=2^{N_X(I_{\veps})} \frac{B(a+N_X(I_{\veps0}),a+N_X(I_{\veps1}))}{
B(a,a)}.
\end{equation}
Let us now consider parameters $(p_\veps^X)$ given by the equations
\begin{equation}\label{def:rox}
\frac{p^X_{\veps}}{1-p^X_{\veps}}(1-p^X_{\veps0})(1-p^X_{\veps1})
=\frac{p_{\veps}}{1-p_{\veps}}(1-p_{\veps0})(1-p_{\veps1})\nu_\veps^X,
\end{equation}
Equations \eqref{def:rox} together admit a unique solution $(p_\veps^X)$ obtained by a bottom--up recursion noting that for $|\veps|=L_\text{max}$, $p^X_{\veps}=p_{\veps}=0$. This is verified along the proof of Proposition \ref{prop2} below.

\begin{prop}[Special case of OPTs]  \label{prop2}
In the setting of Proposition \ref{prop1}, suppose further that the distribution $\Pi_\bT$ on trees is $\operatorname{GW}(p)$  with split probabilities $(p_\veps)$. Then the posterior distribution can be described as
\begin{align*}
\Pi[\cT=\cdot\given X] & \sim \operatorname{GW}(p_\veps^X) \\
\Pi[\cdot\given X, \cT] & \sim \operatorname{\cT--PT}(a_\veps^X)
\end{align*}
with splits probabilities $(p_\veps^X)$ verifying the recursion \eqref{def:rox} and $a_\veps^X$ as in Proposition \ref{prop1}. In other words the posterior follows an OPT distribution with corresponding hyperparameters as specified in Proposition \ref{opt}.
\end{prop}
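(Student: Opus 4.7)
The conditional-on-$\cT$ part of the statement is already given by Proposition~\ref{prop1}, so the task reduces to identifying the marginal law of $\cT$ under the posterior. From \eqref{post:trees}, this is
\[
\Pi[\cT=T\given X]\;\propto\;\Pi_\bT[\cT=T]\, N_T(X),
\]
so the plan is: (a) compute $N_T(X)$ explicitly for a $\cT$--PT$(a)$ and exhibit a product structure indexed by internal/external nodes of $T$; (b) combine this with the product form of $\operatorname{GW}(p)$ and read off the posterior split probabilities.

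For step (a), I would use the standard Pólya tree marginal likelihood computation. Conditionally on $\cT=T$, one has $\prod_{i=1}^n f(X_i)=\prod_{\veps\in T_{\text{ext}}} h_\veps^{N_X(I_\veps)}$ with $h_\veps=2^{|\veps|}\prod_{i=1}^{|\veps|} Y_{\veps^{[i]}}$, and the $Y$'s are independent Beta variables. Integrating out the $Y$'s using the $\bet(a,a)$ normalising constant yields a product over internal nodes of $B(a+N_X(I_{\eta 0}),a+N_X(I_{\eta 1}))/B(a,a)$. The $2^{|\veps|}$ factors reassemble using the identity $\sum_{\veps\in T_{\text{ext}}}|\veps| N_X(I_\veps)=\sum_{\eta\in T_{\text{int}}}N_X(I_\eta)$ (each data point is counted once per proper ancestor in $T$). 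Thus
\[
N_T(X)\;=\;\prod_{\eta\in T_{\text{int}}} \nu_\eta^X,
\]
with $\nu_\eta^X$ as in \eqref{def:nux}.

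For step (b), combining with $\Pi_\bT[\cT=T]=\prod_{\eta\in T_{\text{int}}} p_\eta \prod_{\veps\in T_{\text{ext}}}(1-p_\veps)$ (boundary handled by $p_\veps=0$ on level $L_{\max}$), one gets
\[
\Pi_\bT[\cT=T]\,N_T(X)\;=\;\prod_{\eta\in T_{\text{int}}} p_\eta \nu_\eta^X \prod_{\veps\in T_{\text{ext}}}(1-p_\veps).
\]
I would then argue that this is proportional to a $\operatorname{GW}(p^X)$ law on $\bT_n$ by the local-move criterion: comparing two trees that differ only by turning a leaf $\veps$ into an internal node with two leaf children yields the ratio $p_\veps \nu_\veps^X(1-p_{\veps 0})(1-p_{\veps 1})/(1-p_\veps)$, and matching this with the corresponding GW$(p^X)$ ratio $p^X_\veps(1-p^X_{\veps 0})(1-p^X_{\veps 1})/(1-p^X_\veps)$ produces exactly equation~\eqref{def:rox}. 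Since any two trees in $\bT_n$ are connected by such elementary moves (grow/prune), matching this single ratio everywhere is sufficient. Existence and uniqueness of $(p_\veps^X)$ follow by a bottom-up recursion from the boundary $|\veps|=L_{\max}$, where $p_\veps^X=p_\veps=0$ forces the ratio $(1-p^X_{\veps 0})(1-p^X_{\veps 1})$ on the right of \eqref{def:rox} to be a known quantity at each step, so that $p^X_\veps$ is determined.

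The main obstacle, in my view, is purely bookkeeping: tracking the $2^{|\veps|}$ powers and verifying the internal/external-node reindexing carefully enough to obtain the clean product form $N_T(X)=\prod_{\eta\in T_{\text{int}}}\nu_\eta^X$, and checking that the prior's truncation at depth $L_{\max}$ is compatible with the recursion \eqref{def:rox} so that the posterior also concentrates on $\bT_n$. Once this product form is in hand, the rest of the argument is algebraic and essentially reduces to matching the per-node factors of the two GW measures.
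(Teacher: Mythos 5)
Your proposal is correct and takes essentially the same route as the paper's proof in Appendix C: compute the marginal likelihood $N_T(X)$ as a product of $\nu_\eta^X$ over internal nodes by reindexing from leaves, multiply by the $\operatorname{GW}(p)$ product form, and identify the posterior split probabilities by matching the grow/prune ratio $\Pi[T^+\given X]/\Pi[T\given X]$, with existence and uniqueness via a bottom-up recursion from the boundary $p_\veps=0$ at depth $L_{\max}$. The only minor imprecision is calling the $Y$'s independent Beta variables without noting that $Y_{\veps 1}=1-Y_{\veps 0}$ (independence holds across internal nodes, one Beta per split), but this does not affect the structure of the argument.
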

The proof of this proposition is presented in Appendix \ref{sec: tree_GW_posterior}.

\subsection{Notation and function spaces}

Below we shall consider the Hölder class of functions with support in $[0,1)$ and smoothness parameter $0<\alpha\le 1$, defined as

$$\mathcal{C}^\alpha[0,1)\coloneqq\left\{f:[0,1)\mapsto\mathbb{R},\quad \underset{x\neq y}{\text{sup}} \frac{|f(x)-f(y)|}{|x-y|^{\alpha}} <+\infty\right\}$$
and we similarly define Hölder balls with parameters $\alpha>0$ and $K\geq0$ as
$$\Sigma(\alpha, K)\coloneqq\left\{f:[0,1)\mapsto\mathbb{R},\quad \underset{x\neq y}{\text{sup}} \frac{|f(x)-f(y)|}{|x-y|^{\alpha}} \leq K\right\}.$$

{\em Bounded Lipschitz metric.} Let $(\mathcal{S},d)$ be a metric space. The bounded Lipschitz metric $\beta_{\mathcal{S}}$ on probability measures of $\mathcal{S}$ is defined as, for any $\mu,\nu$ probability measures of $\mathcal{S}$,

\begin{equation}
\beta_{\mathcal{S}}(\mu,\nu)=\sup_{F;\|F\|_{BL}\leq1}\left|\int_{\mathcal{S}}F(x)(d\mu(x)-d\nu(x))\right|,
\end{equation}

\noi where $F:\mathcal{S}\to \mathbb{R}$ and

\begin{equation}
\|F\|_{BL}=\sup_{x\in \mathcal{S}}|F(x)|+\sup_{x\neq y}\frac{|F(x)-F(y)|}{d(x,y)}.
\end{equation}

\noi This metric metrises the convergence in distribution, 
see e.g. \cite{D02}, Theorem 11.3.3.

As shown in \cite{c17}, it is also useful to introduce the Haar wavelet basis to carry out an analysis of Pólya tree-like posterior distributions. Indeed, one can relate the inclusion of a node $(l,k)$ in a tree $\mathcal{T}$ to the fact that the coefficient corresponding to the Haar wavelet function $\psi_{lk}$ in the decomposition of $f \sim \Pi[\cdot | \mathcal{T}]$  is non-zero almost surely. More precisely, the Haar basis of $L^2[0;1)$ is the family composed of the mother wavelet $\phi=\mathds{1}_{[0;1)}$ and the functions

\[\psi_{lk}({}\cdot)=2^{l/2}\psi(2^l\cdot-k)\] 
for $l\geq0$ and $0\leq k<2^l$, where $\psi=\mathds{1}_{[1/2;1)}-\mathds{1}_{[0;1/2)}$. However, as we consider the problem of density estimation, maps $f$ under scrutiny all verify $\langle f, \phi\rangle=\int_0^1 f(t)dt=1$, so that we only focus on the wavelets $\psi_{lk}$ and the corresponding coefficients $f_{lk}\coloneqq\langle f, \psi_{lk}\rangle$ in the following. As for the true density, we define $f_{0,lk}\coloneqq \langle f_0, \psi_{l,k}\rangle$.


\section{Posterior contraction rates for OPTs}
\label{sec:rates}

For any $\alpha>0$, $\mu>0$, $K\geq 0$, we define the regularity class of densities \[\mathcal{F}(\alpha,K,\mu)\coloneqq \left\{f\geq\mu,\ \ \int_0^1 f=1,\ \ f\in \Sigma(\alpha, K)\right\},\] as well as the sequence \begin{equation}\label{rate_contraction}\veps_n(\alpha) \coloneqq \left(n^{-1} \log^2 n\right)^{\frac{\alpha}{1+2\alpha}}.\end{equation}
Up to a logarithmic factor, this corresponds to the minimax supremum norm rate of estimation over the class $\mathcal{F}(\alpha,K,\mu)$, which equals $(n/\log{n})^{-\al/(1+2\al)}$ up to constants \cite{ih80}. 

\subsection{Supremum norm convergence for  the whole posterior distribution} 

We now show that the posterior distribution $\Pi[\cdot\given X]$ asymptotically concentrates most of its mass on a $\|\cdot\|_\infty$--ball of optimal radius.
 
\begin{theorem} \label{contraction_rate}
Suppose that $f_0\in\mathcal{F}(\alpha,K,\mu)$ for some $\mu>0$, $0<\alpha\leq1$ and $K\geq0$. Let $\Pi$ be an OPT prior with split probabilities $p_{lk}=\Gamma^{-l}$, $l\geq0$, $0\leq k<2^l$, $\Gamma>0$, and parameter $a>0$. Then, for $\Gamma$ large enough, any sequence $M_n\to \infty$, as $n\to\infty$, and $\veps_n=\veps_n(\alpha)$ as in \eqref{rate_contraction}, 
\[E_{f_0} \Pi\Big[ \norm{f-f_0}_\infty>M_n\varepsilon_n \given X \Big] \to 0.\]
\end{theorem}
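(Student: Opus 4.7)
The strategy has two stages: first localise the posterior on a set of ``good'' trees of near-optimal depth, then exploit the conjugate $\cT$--P\'olya tree structure from Proposition~\ref{prop1} to control the conditional posterior in sup-norm via a bias--variance decomposition.

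Choose the critical depth $L^*$ by $2^{L^*}\asymp (n/\log^2 n)^{1/(1+2\al)}$, so that $2^{-L^*\al}\asymp \veps_n$. This balances the $\|\cdot\|_\infty$-bias of an order-$L^*$ dyadic histogram for $f_0\in\Sigma(\al,K)$ against the stochastic fluctuation of $n$ samples spread over $2^{L^*}$ cells. Let $\cT_n^*\subset\bT_n$ consist of trees that (i) contain every internal node $(l,k)$ with $l\le L^*$ (``deep enough''), and (ii) satisfy $d(\cT)\le L^*+C$ for a suitable constant $C$ (``not too deep'').

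\emph{Step 1: tree posterior concentrates on $\cT_n^*$.} By \eqref{post:trees}--\eqref{def:rox}, the posterior on trees is of $\operatorname{GW}$ type, with splitting odds at an internal node $\veps$ proportional to $\frac{p_\veps}{1-p_\veps}\nu_\veps^X=(\Ga^l-1)^{-1}\nu_\veps^X$, where $\nu_\veps^X$ is given by \eqref{def:nux}. For $|\veps|\le L^*$, on the typical event $\{N_X(I_\veps)\ge c\mu n 2^{-l}\}$, which has $P_{f_0}$--probability $1-o(1)$ uniformly over such nodes by Bernstein's inequality for multinomial cell counts and the bound $f_0\ge\mu$, a Stirling expansion of the Beta functions in $\nu_\veps^X$ yields $\log \nu_\veps^X\gtrsim N_X(I_\veps)\gtrsim n2^{-l}\gg l\log\Ga$, provided $\Ga$ is a fixed (large enough) constant. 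Hence the posterior a.s.\ forces a split there, and a union bound over the $O(2^{L^*})$ candidate nodes gives (i). For (ii), at deep levels $l>L^*+C$ a reverse upper bound $\nu_\veps^X\le (\mathrm{const})^{N_X(I_\veps)+1}$ combined with the cumulative prior penalty $\prod_{i\le l}p_{\veps^{[i]}}=\Ga^{-l(l-1)/2}$ and a union bound over the at most $n$ cells actually containing data yields the required decay.

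\emph{Step 2: bias--variance inside good trees.} Fix $\cT\in\cT_n^*$ and let $\hat f_\cT=\sum_{\veps\in\cT_{ext}}\bigl(|I_\veps|^{-1}N_X(I_\veps)/n\bigr)\,\1_{I_\veps}$ be the histogram estimator on $I_\cT$. Writing $P_\cT$ for the $L^2$--projection onto histograms over $I_\cT$, decompose
\[ \|f-f_0\|_\infty \le \|f-\bE[f\given X,\cT]\|_\infty + \|\bE[f\given X,\cT]-\hat f_\cT\|_\infty + \|\hat f_\cT - P_\cT f_0\|_\infty + \|P_\cT f_0-f_0\|_\infty. \]
The last term is the histogram approximation error for $f_0\in\Sigma(\al,K)$ on a partition refining the uniform one at depth $L^*$, so is bounded by $K\,2^{-L^*\al}\lesssim\veps_n$. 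The third is a maximal binomial deviation over $\le 2^{L^*+C}$ cells, of order $\sqrt{L^*\cdot 2^{L^*}/n}\ll\veps_n$ by Bernstein and a union bound. The second is $O(a\cdot 2^{L^*}/n)\ll\veps_n$, since the conjugate update $a_\veps^X=a+N_X(I_\veps)$ of Proposition~\ref{prop1} makes $\bE[h_\veps\given X,\cT]$ a telescoping product that differs from $\hat f_\cT(I_\veps)$ by a multiplicative factor $1+O(a\,2^l/n)$ on the event $\{\min_{i\le l}N_X(I_{\veps^{[i]}})\gtrsim n2^{-i}\}$ (controlled again by Bernstein, uniformly in $\veps$). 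For the first (purely posterior) term, $h_\veps=2^l\prod_i Y_{\veps^{[i]}}$ is a product of independent $\operatorname{Beta}$ factors with both parameters of order $n2^{-i}$ on that event; a Bernstein-type concentration on the log-scale, summed over $i\le l$, yields $|h_\veps-\bE[h_\veps\given X,\cT]|\lesssim \sqrt{L^*\cdot 2^l/n}\ll\veps_n$ on a posterior event of probability $\ge 1-o(1)$, uniformly in $\veps\in\cT_{ext}$ by a union bound over leaves. Combining the four bounds and integrating over the posterior on $\cT$, restricted to $\cT_n^*$ by Step~1, yields $\|f-f_0\|_\infty\lesssim\veps_n$ with $P_{f_0}$--probability $\to 1$, hence the statement for any $M_n\to\infty$.

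\emph{Main obstacle.} The hardest point is Step~1(i): forcing the posterior to fully populate every node below the data-adaptive level $L^*$, even at nodes where the local variation of $f_0$ is small. The analysis must trade off the exponentially decaying prior factor $\Ga^{-l}$ against a Bayes factor $\nu_\veps^X$ whose typical size is merely exponential in the number of data points in the cell, and therefore requires a \emph{uniform} lower bound on $\nu_\veps^X$ over the $O(2^{L^*})$ candidate internal nodes, crucially leveraging $f_0\ge\mu$ and tight constants in the Bernstein tail bound for multinomial counts. A secondary difficulty, resolved by the tight computations sketched above, is to absorb all logarithmic losses into the single $\log^2 n$ factor of $\veps_n$ rather than a higher power, which explains the small logarithmic gap to the exact minimax sup-norm rate.
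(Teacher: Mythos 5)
Your Step~1(i) contains a genuine error that the rest of the argument inherits. You claim that for every node $\veps$ with $|\veps|\le L^*$, on the event $\{N_X(I_\veps)\gtrsim n2^{-l}\}$, a Stirling expansion gives $\log\nu_\veps^X\gtrsim N_X(I_\veps)$, so the posterior ``a.s.\ forces a split there.'' This is false. Writing $N=N_X(I_\veps)$, $n_0=N_X(I_{\veps0})$, $n_1=N_X(I_{\veps1})$, the Stirling expansion of \eqref{def:nux} actually gives
\[
\log\nu_\veps^X \;\approx\; N\bigl(\log 2 - H(n_0/N)\bigr) \;+\; O(\log N),
\]
with $H$ the binary entropy. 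Since $H(p)\le\log 2$ with equality at $p=1/2$, and $\log 2 - H(p)\asymp (p-1/2)^2$ near $p=1/2$, the leading term is of order $N\,\delta^2$ with $\delta=(n_0-n_1)/(2N)$. For $f_0\in\Sigma(\alpha,K)$ one has $\delta\lesssim 2^{-l\alpha}$ (plus sampling noise), so at a node near $L^*$ one gets $N\delta^2\lesssim (n2^{-l})\,2^{-2l\alpha}\to 0$; and the $O(\log N)$ correction is in fact negative of order $-\tfrac12\log N$ (this is exactly the quantity $Q_2$ in the paper's Lemma~\ref{not too deep}). So at nodes where the local wavelet coefficient of $f_0$ is small --- which is the generic case at deep levels for a H\"older function --- the Bayes factor $\nu_\veps^X$ is \emph{polynomially small}, not exponentially large, and the posterior correctly \emph{declines} to split. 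Your claim would only hold when $|f_{0,lk}|$ is of order $2^{-l/2}$, i.e.\ at a ``rough'' node, and cannot hold uniformly over the $O(2^{L^*})$ candidate nodes.

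This gap matters because your Step~2 hinges on the tree being complete up to $L^*$: only then can you write $\|P_\cT f_0-f_0\|_\infty\le K\,2^{-L^*\alpha}$. With the correct behaviour of the posterior, the tree is a sparse, data-adaptive subset, and that bound fails. The paper's approach is different and avoids the problem: it shows (Lemma~\ref{huge signals taken}) that the posterior tree asymptotically contains the \emph{signal set} $S(f_0,\tau)=\{(l,k):|f_{0,lk}|\ge\tau\log n/\sqrt n\}$, and then controls the bias from missing nodes inside depth $L_n$ via a thresholded sum
\[
\|f_0^{\cT^c,L_n}\|_\infty\;\lesssim\; \sum_{l\le L_n}2^{l/2}\Bigl(\max_k|f_{0,lk}|\ \wedge\ \tfrac{\log n}{\sqrt n}\Bigr)\;\lesssim\;2^{-L^*\alpha},
\]
together with the standard tail bias beyond $L_n$. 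That is the decomposition you need, and it is also what makes the procedure genuinely adaptive. Your Step~1(ii) on excluding trees deeper than $L_n$ and your Step~2 variance/conjugacy estimates are in the right spirit and broadly match Lemmas~\ref{not too deep} and~\ref{no distortion}, but the ``complete tree to depth $L^*$'' premise must be replaced by inclusion of $S(f_0,\tau)$ and the corresponding thresholded bias control.
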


Theorem \ref{contraction_rate} shows that an OPT posterior with split probabilities decreasing exponentially fast with nodes depth concentrates most of its mass in a supremum norm ball of (near--) minimax optimal radius, whenever the signal has regularity $\alpha\leq 1$. Some comments are in order. First, the regularity requirement $\alpha\le 1$ is typical and expected for `hard trees', which produce histogram-type estimators. An alternative would be to use `soft trees', where individual learner are smooth \cite{lineroyang18, cr21}, see also the discussion in Section \ref{sec:disc}. Second,  the slight loss of a logarithmic term in the convergence rate can be shown to be intrinsic to trees and is not due to a possible suboptimality of our rate upper--bounds: this has been formally shown in \cite{cr21}, Theorem 2, in a regression context; an analogous result could be shown in density estimation in a similar way. 

A consequence of Theorem \ref{contraction_rate} is that a posterior draw is close with high probability to the true unknown density function of interest. This settles the {\em estimation} problem, but it does not yet say much about the quantification of uncertainty, i.e. the construction of {\em confidence sets}, a question addressed in Section \ref{sec:uq}.

\subsection{Convergence rate for  the median tree} 
While Theorem \ref{contraction_rate} entails convergence in probability of a draw from $\Pi[\cdot\given X]$, one may ask what happens for aspects of such distribution, e.g. point estimators derived from it. A natural such estimator from the point of view of tree priors is the median tree estimator defined below, since there is a natural tree associated to it. Such an estimator will also turn helpful for uncertainty quantification as considered below.

The {\em median tree} is defined as the tree $\mathcal{T}^*$ whose interior nodes are 
\begin{equation}\label{mediantree}
\mathcal{T}^*_{\text{int}}=\left\{(l,k):\ \Pi[(l,k)\in \mathcal{T}_\text{int}| X]>1/2\right\},
\end{equation} 
and which is actually a tree as  defined previously (see \cite{cr21}, Lemma 13). One associates to it the {\em median tree density estimator}
\begin{equation} \label{medest} 
\hat f_{\mathcal{T}^*} =1+ \sum_{(l,k)\in \mathcal{T}^*_{\text{int}}} 2^{l/2}\frac{N_X\left(I_{(l+1)(2k+1)}\right)-N_X\left(I_{(l+1)(2k)}\right)}{n} \psi_{lk}.
\end{equation}
Lemma \ref{lemma: supnorm convergence median tree} in the appendix shows that this estimator converges in probability to the actual density $f_0$ at the same almost-minimax rate $\varepsilon_n$ in supnorm as in Theorem \ref{contraction_rate}. In Section \ref{sec:sims}, examples of $\mathcal{T}^*$ and  $\hat f_{\mathcal{T}^*}$ are presented in Figures \ref{fig: interior nodes medtree} and \ref{fig: med tree and sigma_n}.


\section{Uncertainty quantification for OPTs}
\label{sec:uq}

In nonparametrics the problem of uncertainty quantification is well--known to be more delicate than the one of estimation: first negative results to the ambitious goal of constructing confidence sets that both cover the unknown truth and have a diameter that adapts in an optimal way to the smoothness of the unknown function or density were due to \cite{li89} and \cite{low97}. The general picture that emerged in recent years following these early works is that the difficulty of the problem depends on the considered loss function and on certain testing rates of separation, see \cite{ginenickl_book}, Chapter 8.  Notably, for the supremum norm, contrary to $L^2$--losses for which some `window' of adaptation is possible, constructing adaptive confidence sets in full generality is impossible unless one restricts the set of possible functions by assuming e.g.  self--similarity conditions. Such conditions can be shown to be essentially necessary; they are also fairly natural from the practical perspective given that self--similarity is itself quite wide--spread in natural phenomena. 

Let us briefly describe the uncertainty quantification results we derive.  A first confidence band based on the posterior median and using self--similarity is built in Section \ref{cs-simple}. Next, we prove in Section \ref{cs-func} that the quantile posterior credible set for the cumulative distribution function leads to optimal UQ; this is a consequence of a more general result, an (adaptive) nonparametric Bernstein--von Mises theorem, proved in Appendix \ref{sec:npbvm}. Finally in Section \ref{cs-multi} we construct a confidence band integrating further information from some functionals that is less conservative than the simple band constructed in Section \ref{cs-simple} and achieves a target confidence level. Our results can be seen as counterparts in density estimation and for tree priors of the results in \cite{ray17}. Another approach in density estimation would be to use spike--and--slab P\'olya priors as recently considered by the second author in \cite{cm21}. Nevertheless, the latter are expected to be less efficient to compute in high--dimensions (as they, e.g., require to explore all wavelet coefficients in the different dimensions), a setting that, while not investigated in the present paper, is particularly promising for OPTs, see also the discussion in Section \ref{sec:disc}. 

\subsection{A self-similarity condition}

Here we take the same condition as in \cite{ray17} (see also \cite{ginenickl_book}). It is fairly simple to state, and can be only slightly improved (see \cite{bull}). 
\begin{definition}[Set $\cS$ of self--similar functions] \label{def-self}
Given an integer $j_0>0$ and $\al\in(0,1]$, we say that $f\in\Sigma(\alpha, K)$ is {\em self-similar} if, for some constant $\eta>0$,
\[ \norm{K_j(f)-f}_{\infty}\geq \eta 2^{-j\alpha}\ \text{for all $j\geq j_0$,}\]
where $K_j(f)=\sum_{l<j}\sum_k \langle f, \psi_{lk}\rangle \psi_{lk}$. The set of such $f$'s is denoted 
$\cS=\cS(\alpha, K, \eta)$.
\end{definition}
The condition assumes that at each resolution depth $j\ge j_0$, the overall `energy' (measured in terms of supremum norm) of the wavelet coefficients at levels larger than $j$ is lower bounded by a typical amount for $\alpha$--H\"older functions. Indeed, for any $j\ge j_0$, the quantity $\norm{K_j(f)-f}_{\infty}$ is itself also upper--bounded up to a constant by the same quantity (this follows from standard bounds on the supremum norm and the definition of the H\"older class).

\subsection{Simple confidence band} \label{cs-simple}

A first construction consists in defining a band from a centering function and a radius. A first and simple possibility consists in defining those using the median tree \eqref{mediantree}: the resulting median tree estimator \eqref{medest} can serve as center, while a radius can be defined as
\begin{equation} \label{def:sig}
 \sigma_n =v_n\sqrt{\frac{\log n}{n}} 2^{d(\mathcal{T}^*)/2},
\end{equation} 
where $d(\mathcal{T}^*)$ is the depth of the median tree $\mathcal{T}^*$, for some slowly diverging sequence  $(v_n)$ as specified below. This allows us to define the confidence band, for $\hat f_{\mathcal{T}^*}$ as in \eqref{medest},
\begin{equation}\label{confreg_def} 
\mathcal{C}_n = \left\{f:\ \norm{f-\hat f_{\mathcal{T}^*}}_\infty\leq \sigma_n\right\}.\end{equation}
Under self--similarity as in Definition \ref{def-self}, the median tree can in particular be shown to have a depth of the order of the oracle cut--off $2^{L_n^*}\approx n^{1/(2\al+1)}$ (up to a logarithmic factor, see the Appendix for a precise statement in Lemma \ref{lemma: med tree depth}) which in turn implies desirable properties for the band $\mathcal{C}_n$ as is made explicit in the next theorem.

\begin{theorem}\label{confidence_set_level}
Let $0<\alpha_1<\alpha_2\leq 1$, $K>0$, $\mu>0$ and $\eta>0$. Let $\Pi$ be the same prior as in Theorem \ref{contraction_rate}, $\cC_n$ as in \eqref{confreg_def} with $v_n/\log^{1/2} n\to\infty$, then uniformly on $f_0\in\cS(\alpha, K, \eta)\cap \mathcal{F}(\alpha,K,\mu),\ \alpha\in[\alpha_1,\alpha_2]$,
\[ \left|\mathcal{C}_n\right|_\infty =O_{P_{0}}\left(v_n\Bigg(\frac{\log n}{n}\Bigg)^{\alpha/(2\alpha+1)}\right)\]
and
\[P_0\left[f_0\in \mathcal{C}_n\right]=1+o(1),\qquad \Pi[\cC_n\given X]=1+o_{P_0}(1).\]
\end{theorem}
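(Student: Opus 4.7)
The plan is to split the three assertions (diameter, frequentist coverage, posterior coverage) and reduce each one to the three ingredients already at hand: (i) Lemma \ref{lemma: med tree depth} which, under self--similarity, pins down the depth of the median tree as $d(\mathcal{T}^*)\asymp L_n^*$ with $2^{L_n^*}\asymp (n/\log n)^{1/(2\al+1)}$ up to logarithmic slack; (ii) Lemma \ref{lemma: supnorm convergence median tree} which yields the almost--minimax rate $\|\hat f_{\mathcal{T}^*}-f_0\|_\infty=O_{P_{f_0}}(\veps_n)$, in the sharper variance--style form $\|\hat f_{\mathcal{T}^*}-f_0\|_\infty\lesssim \sqrt{\log n /n}\,2^{d(\mathcal{T}^*)/2}$ that comes out of its proof; and (iii) Theorem \ref{contraction_rate} giving $\|f-f_0\|_\infty=O(\veps_n)$ with posterior probability tending to $1$. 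A key consequence of $\al\le 1$ and $v_n/\log^{1/2}n\to\infty$ is that $v_n\gg \log^{\al/(2\al+1)}n$, hence $\sigma_n/\veps_n\to\infty$; this gap is what makes the band simultaneously cover and stay near--optimal.

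For the diameter, on the event $\{d(\mathcal{T}^*)\le L_n^*+C\}$ provided by (i),
\[
|\mathcal{C}_n|_\infty=2\sigma_n=2v_n\sqrt{\log n/n}\,2^{d(\mathcal{T}^*)/2}\lesssim v_n\left(\log n/n\right)^{\al/(2\al+1)},
\]
uniformly in $\al\in[\al_1,\al_2]$. For frequentist coverage, by the triangle inequality $\{f_0\notin\mathcal{C}_n\}\subseteq\{\|\hat f_{\mathcal{T}^*}-f_0\|_\infty>\sigma_n\}$; the sharper form of Lemma \ref{lemma: supnorm convergence median tree} gives $\|\hat f_{\mathcal{T}^*}-f_0\|_\infty\le C'\sqrt{\log n/n}\,2^{d(\mathcal{T}^*)/2}=C'v_n^{-1}\sigma_n=o(\sigma_n)$ with $P_{f_0}$--probability tending to $1$, hence $P_{f_0}(f_0\in\mathcal{C}_n)\to 1$. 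For posterior coverage, condition on $X$ and write, for $f\sim\Pi[\cdot\given X]$,
\[
\|f-\hat f_{\mathcal{T}^*}\|_\infty\le \|f-f_0\|_\infty+\|f_0-\hat f_{\mathcal{T}^*}\|_\infty,
\]
apply Theorem \ref{contraction_rate} to the first term (bounded by $M_n\veps_n$ with posterior probability tending to $1$ in $P_{f_0}$--probability) and the frequentist bound above to the second term. Since both are $o(\sigma_n)$ by the $v_n$ assumption, $\Pi[\mathcal{C}_n\given X]=1+o_{P_{f_0}}(1)$.

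The main obstacle is ensuring that Lemma \ref{lemma: med tree depth} is truly two--sided and uniform over the self--similar class: the upper bound is what yields the diameter and posterior coverage, while the lower bound is what guarantees that the bias component in the variance--style estimate for $\|\hat f_{\mathcal{T}^*}-f_0\|_\infty$ is genuinely dominated by $\sqrt{\log n/n}\,2^{d(\mathcal{T}^*)/2}$ rather than by an unresolved low--frequency remainder (this is where self--similarity is essential). Uniformity over $\al\in[\al_1,\al_2]$ further requires that the constants in Theorem \ref{contraction_rate}, Lemma \ref{lemma: supnorm convergence median tree} and Lemma \ref{lemma: med tree depth} depend only on $(K,\mu,\eta,\al_1,\al_2)$, which a careful inspection of those proofs should confirm. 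Once the depth is pinned down, the rest is a clean triangle--inequality argument driven by the $v_n\gg\log^{1/2}n$ buffer.
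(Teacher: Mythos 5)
The overall architecture of your proposal matches the paper's proof — split into diameter / frequentist coverage / posterior coverage, use the two--sided depth control from Lemma \ref{lemma: med tree depth} for the diameter, bound $\|\hat f_{\cT^*}-f_0\|_\infty$ and apply the triangle inequality. The diameter and credibility steps are fine. However, there is a genuine gap in your frequentist coverage argument.

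You claim that a ``sharper, variance--style form'' of Lemma \ref{lemma: supnorm convergence median tree} gives $\|\hat f_{\cT^*}-f_0\|_\infty \le C'\sqrt{\log n/n}\,2^{d(\cT^*)/2} = C'v_n^{-1}\sigma_n$, and then conclude with $v_n\to\infty$ alone. This bound is not available. The estimator error in that lemma decomposes into a \emph{variance} part, which is indeed $\lesssim 2^{d(\cT^*)/2}\sqrt{\log n/n}$, and two \emph{bias} parts: the tail levels $2^l\gtrsim 2^{L_n}$, and the within--depth nodes with $(l,k)\notin\cT^*_{\text{int}}$ whose coefficients are of size up to $Bn^{-1/2}\log n$. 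These bias terms sum to order $(\log^2 n/n)^{\al/(2\al+1)}$. Comparing this to $\sqrt{\log n/n}\,2^{d(\cT^*)/2}$ at the self--similarity--implied \emph{lower} bound $2^{d(\cT^*)}\gtrsim(n/\log^2 n)^{1/(2\al+1)}$, one finds
\[
\frac{(\log^2 n/n)^{\al/(2\al+1)}}{\sqrt{\log n/n}\,2^{d(\cT^*)/2}}\asymp(\log n)^{1/2},
\]
so the bias can be a factor $\log^{1/2}n$ larger than the variance scale. Your claimed inequality is therefore too strong by exactly that factor, and the condition $v_n\gg\log^{1/2}n$ (not merely $v_n\to\infty$) is there to cover this very discrepancy.

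The correct route, and the one the paper takes, keeps the two ingredients separate: (a) $\|\hat f_{\cT^*}-f_0\|_\infty=O_{P_0}\big((\log^2 n/n)^{\al/(2\al+1)}\big)$ from Lemma \ref{lemma: supnorm convergence median tree}, and (b) a \emph{lower} bound on $\sigma_n$ obtained from self--similarity: there is some $l\ge\Delta_n$ with $|\langle f_0,\psi_{lk}\rangle|>A\log n/\sqrt n$, forcing $(l,k)\in\cT^*$, hence $d(\cT^*)\ge\Delta_n$ and $\sigma_n\ge C'(v_n/\log^{1/2}n)(\log^2 n/n)^{\al/(2\al+1)}$. Combining (a) and (b), $v_n/\log^{1/2}n\to\infty$ gives $\|\hat f_{\cT^*}-f_0\|_\infty\le\sigma_n/2$ eventually, which is coverage. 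Your narrative in the final paragraph gestures at this buffer, but the explicit chain you write consumes the $\log^{1/2}n$ factor in the (false) bound rather than in the comparison against the lower bound for $\sigma_n$, so the step as written does not close.
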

For a slowly diverging sequence $(v_n)$, the diameter of $\mathcal{C}_n$ is then within a logarithmic factor of the minimax rate of estimation on $\Sigma(\alpha,K)$ with high probability. It is attained adaptively (the definition of $\mathcal{C}_n$ does not depend on $\alpha$) for any window $[\alpha_1;\alpha_2]$. The set $\cC_n$ allows to quantify uncertainty on $f_0$ as it is  an asymptotic confidence set, and it is also a credible set of credibility going to $1$.

\subsection{UQ for functionals: a Donsker--type theorem} \label{cs-func}
$\ $\\

{\em OPTs with flat initialisation.} Let us introduce a slight modification of the OPT prior where trees from the prior distribution are constrained to include all nodes of depth less than some  number $l_0=l_0(n)$, slowly diverging to $\infty$.  

\begin{definition}
A prior on densities $\Pi$ of the type \eqref{def:priorgen} is said to have {\em flat initialisation up to level} $l_0=l_0(n)$ if the prior  
on trees $\Pi_\bT$ verifies
\[ \Pi_\bT\left[\bigcap_{l\le l_0(n), k} \{(l,k)\in \cT\} \right] = 1. \]
\end{definition}


The next result considers the behaviour of the induced posterior on $F(\cdot)=\int_0^\cdot f$, that is on the distribution function for an OPT prior on $f$.  Let us also define, for $\hat f_{\mathcal{T}^*}$ the median tree estimator,
\begin{equation} \label{def-cdfmed}
\hat F_n^{med}(t)=\int_0^t \hat f_{\mathcal{T}^*}(u)du.
\end{equation}
Let us recall that for $Q$ a probability measure on $[0,1]$ of distribution function $H$,  a $Q$--Brownian bridge is a centered Gaussian process $Z(t)$ with covariance function $E[Z(s)Z(t)]=\min(H(s),H(t))-H(s)H(t)$ and $0\le s,t\le 1$.  
\begin{theorem}[Donsker's theorem for OPTs] \label{thm-Donsker}
Let $X=(X_1,\ldots,X_n)$ be i.i.d. from law $P_0$ with density $f_0$.  
Let $f_0\in\mathcal{F}(\alpha,K,\mu)$, for some $\alpha\in(0;1]$, $K\geq0$, $\mu>0$.  Let $\Pi$ be an OPT prior with flat initialisation up to level $l_0(n)$ that verifies $\sqrt{\log{n}}\le l_0(n)\le \log{n}/\log\log{n}$, and other than that for $l>l_0(n)$ with same parameters as the prior in Theorem \ref{contraction_rate}. 

Let $G_{P_0}$ be a $P_0$-Brownian bridge $G_{P_0}(t), t\in[0,1)$. For $\hat F_n^{med}$ as in \eqref{def-cdfmed}, as $n\to\infty$, 
\[ \beta_{C[0,1)}\left(\cL(\rn(F-\hat F_n^{med})\given X),\cL(G_{P_0})\right) \to^{P_{f_0}} 0. \]
Furthermore, for $F_n$ the empirical distribution function,  as $n\to\infty$, 
\[ \beta_{L^\infty[0,1)}\left(\cL(\rn(F- F_n)\given X),\cL(G_{P_0})\right) \to^{P_{f_0}} 0. \]
\end{theorem}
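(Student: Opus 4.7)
The plan is to establish the second statement first (BvM for $F$ centered at $F_n$), then derive the first by showing $\sqrt{n}\,\|F_n - \hat F_n^{med}\|_\infty = o_{P_0}(1)$, which shifts the conditional distributions by a vanishing deterministic amount and hence preserves BL--convergence. To bound $\|F_n - \hat F_n^{med}\|_\infty$, observe that $\hat F_n^{med}$ is the primitive of the leaf--constant histogram $\hat f_{\cT^*}$, so on each leaf $I_{lk}$ of $\cT^*$ it coincides with the uniform linear interpolation of the empirical mass on $I_{lk}$; the resulting discrepancy with $F_n$ is bounded by the Kolmogorov--Smirnov fluctuation of the empirical measure within $I_{lk}$, of order $\sqrt{P_0(I_{lk})/n}=O_{P_0}(2^{-l/2}/\sqrt n)$. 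Combined with $d(\cT^*)\to\infty$ in $P_0$--probability (a consequence of the contraction rate in Theorem \ref{contraction_rate}), multiplying by $\sqrt n$ gives $o_{P_0}(1)$.

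For the BvM centered at $F_n$, I would use the Haar identity $\mathds{1}_{[0,t]}(u)=t+\sum_{l,k}\Psi_{lk}(t)\psi_{lk}(u)$, where $\Psi_{lk}=\int_0^\cdot \psi_{lk}$ satisfies $\|\Psi_{lk}\|_\infty\le 2^{-l/2-1}$, to write $F(t)-F_n(t)=\sum_{l,k}(f_{lk}-\hat f_{lk})\Psi_{lk}(t)$, where $\hat f_{lk}=n^{-1}\sum_i\psi_{lk}(X_i)$ are the empirical wavelet coefficients. The strategy is to split the sum at a level $L_n$ chosen with $L_n^*\le L_n\le l_0(n)$, where $L_n^*\asymp\log n/(2\alpha+1)$ is the oracle cutoff. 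For the high--frequency tail $l>L_n$, Theorem \ref{contraction_rate} yields $|f_{lk}-f_{0,lk}|\le \|f-f_0\|_\infty\|\psi_{lk}\|_1\lesssim M_n\varepsilon_n 2^{-l/2}$ on the contraction event, while a Bernstein argument gives $|\hat f_{lk}-f_{0,lk}|\lesssim \sqrt{\log n/n}$ uniformly in $k$, so
\[\Big\|\sum_{l>L_n,k}(f_{lk}-\hat f_{lk})\Psi_{lk}\Big\|_\infty \lesssim M_n\varepsilon_n\,2^{-L_n}+\sqrt{\log n/n}\cdot 2^{-L_n/2}=o(n^{-1/2}),\]
for $L_n$ slightly above $L_n^*$. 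Hence only the low--frequency part contributes in the limit.

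For the low--frequency part $l\le L_n$, the flat initialisation assumption forces the posterior tree $\cT$ to contain every node up to level $l_0(n)\ge L_n$ almost surely, so conditionally on the data the posterior of $\{f_{lk}\}_{l\le L_n}$ is governed by a deterministic (in $\cT$) Pólya--tree structure with updated Beta parameters $(a+N_X(I_{\varepsilon 0}),a+N_X(I_{\varepsilon 1}))$ as in Proposition \ref{prop1}. Since these counts are of order $n 2^{-l}\gg \log n$ at these levels, a quantitative Beta--to--Gaussian approximation, along the lines used in \cite{c17} for standard Pólya trees, yields a representation $f_{lk}=\hat f_{lk}+n^{-1/2}Z_{lk}+R_{lk}$, where the $Z_{lk}$ are approximately independent centered Gaussians whose variances match the Haar coefficients of the $P_0$--Brownian bridge, and $R_{lk}$ is a remainder negligible after multiplication by $\sqrt n$ and summation against $\Psi_{lk}$. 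Summing over low frequencies against the Schauder primitives then produces the required Gaussian approximation for the conditional law of $\sqrt n(F-F_n)$.

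The main obstacle is upgrading this finite--dimensional Gaussian approximation to BL--convergence in the infinite--dimensional space $L^\infty[0,1)$ (or $C[0,1)$ for the first statement). Finite--dimensional marginal convergence follows from the Beta CLT together with the explicit covariance match; tightness requires a uniform modulus--of--continuity control for the approximating Gaussian process $\sum_{l\le L_n,k}Z_{lk}\Psi_{lk}$, which I would obtain via a chaining argument exploiting the geometric decay $\|\Psi_{lk}\|_\infty\le 2^{-l/2-1}$ together with the sub--Gaussianity of the $Z_{lk}$. A secondary technicality concerns the extra randomness of the tree between levels $l_0(n)$ and $L_n$, which is handled by conditioning on the high--$P_0$--probability event $\{d(\cT)\le L_n\}$ derived from the contraction rate (trees of excessive depth correspond to densities outside the contraction ball); the preceding bounds extend uniformly over all such trees.
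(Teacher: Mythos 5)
Your plan differs genuinely from the paper's proof, which is a short reduction: since the posterior satisfies the nonparametric Bernstein--von Mises (BvM) Theorem~\ref{BVM} in the multiscale space $\cM_0(w)$, Theorem~4 of \cite{cn14} (continuous mapping for the primitive map, using only $\sum_l w_l 2^{-l/2}<\infty$) immediately gives the Donsker statement centered at $\hat F_n^{med}$; then Remark~\ref{rem-center} shows the centering may be replaced by the primitive $\mathbb Z_n$ of the level-$L_n$ wavelet projection $P_n(L_n)$ of the empirical measure, and the known estimate $\|\mathbb Z_n-F_n\|_\infty=o_{P_0}(n^{-1/2})$ from \cite{cn14,gn09} converts the centering to $F_n$. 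You instead try to establish the $F_n$-centered statement from scratch by a Haar decomposition, a high/low frequency split, a Beta-to-Gaussian CLT, and a chaining tightness argument, and then transfer to $\hat F_n^{med}$ by showing $\sqrt n\|F_n-\hat F_n^{med}\|_\infty=o_{P_0}(1)$. The overall skeleton is reasonable, but there are two real gaps.

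\textbf{The bias term in $\|F_n-\hat F_n^{med}\|_\infty$ is not addressed.} You justify $\sqrt n\|F_n-\hat F_n^{med}\|_\infty\to 0$ by a Kolmogorov--Smirnov fluctuation of order $\sqrt{P_0(I_{lk})/n}$ on each leaf $I_{lk}$. But on a leaf the histogram primitive $\hat F_n^{med}$ is the linear interpolation of $F_n$ across $I_{lk}$, whereas the KS statistic you invoke compares the conditional empirical CDF to the \emph{true} conditional CDF, not to the linear (uniform) interpolant. The discrepancy thus contains, in addition to the stochastic fluctuation you account for, a deterministic bias term
\[
\sup_{s\in[0,1]}\Bigl| s\,P_0(I_{lk})-\int_{k2^{-l}}^{(k+s)2^{-l}} f_0(u)\,du\Bigr|,
\]
which a crude oscillation bound puts at order $2^{-l(1+\alpha)}$. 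For the shallowest admissible leaves $l\asymp l_0(n)\le \log n/\log\log n$, $\sqrt n\,2^{-l(1+\alpha)}\to\infty$, so the naive bound fails. The statement is nevertheless \emph{true}, but the correct control is not via oscillation: one must use that, on the median-tree event (Lemma~\ref{median_tree_prop}), every node $(l',k')$ with $I_{l'k'}\subseteq I_{lk}$ satisfies $|f_{0,l'k'}|<Bn^{-1/2}\log n$, so that along the unique chain of descendants of $t\in I_{lk}$,
\[
\sum_{l'\ge l} 2^{-l'/2}\,|f_{0,l'k'(t)}| \lesssim 2^{-l/2}\,\frac{\log n}{\sqrt n} + 2^{-L^*(\alpha+1)}\ =\ o\bigl(n^{-1/2}\bigr)
\]
uniformly over $l\ge l_0(n)$, where $L^*$ is the usual oracle cutoff. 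Without this wavelet-level argument the reduction to $\hat F_n^{med}$ is unsupported.

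\textbf{The tree randomness between levels $l_0(n)$ and $L_n$ is not handled.} Your split is at $L_n\asymp L_n^*$, but the flat initialisation of the prior only fixes the tree up to level $l_0(n)\ll L_n$. Conditionally on $X$, nodes $(l,k)$ with $l_0(n)<l\le L_n$ may or may not be internal; if not, the corresponding coefficient $f_{lk}$ equals $0$ rather than being close to its empirical counterpart, and this cannot be subsumed into a ``Beta-to-Gaussian'' normal approximation of the active coefficients. Conditioning on $\{d(\cT)\le L_n\}$ does not remove this within-range randomness. The paper sidesteps the issue entirely by splitting at $l_0(n)$: below $l_0(n)$ the posterior is a standard Pólya tree (so the finite-dimensional normal approximation of \cite{c17} applies directly), and the entire block $l>l_0(n)$ is controlled in the $\cM_0(\bar w)$ norm via the contraction argument, using the flat-initialisation-driven bound $\bar w_{l_0(n)}\ge\log n$. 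Finally, the chaining-based tightness you propose is essentially what the continuous-mapping theorem in \cite{cn14} encapsulates, so if you do take this self-contained route you would be re-proving that theorem; citing it (after establishing Theorem~\ref{BVM}) is the shorter and cleaner path.
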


This implies that the induced posterior distribution $\cL(\rn\|F-\hat F_n^{med}\|_\infty \given X)$ converges weakly in probability to $\cL(\|G_{P_0}\|_\infty)$. Furthermore, for $0<\ga<1$, the credible set 
\[ \cF_n=\{F:\ \|F-\hat F_n^{med}\|_\infty \le \rho_n^X\},\]
with $\rho_n^X$ chosen such that $\Pi[\cF_n\given X]=1-\gamma$, 
is an asymptotically  optimal (efficient) confidence set of level $1-\ga$.  
We refer to \cite{cn14} for more details on this; note that in the latter paper the results are for priors of fixed regularity only, whereas here the prior additionally enables adaptation to the smoothness of $f$. The behaviour of the credible set $\cF_n$ is illustrated in Figure \ref{fig: cdf cs}.

\subsection{Multiscale confidence band} \label{cs-multi}
Here we follow the approach introduced in \cite{cn13, cn14} and first briefly recall the idea. One wishes to define a `multiscale' space (i.e. defined from wavelet coefficients) with an associated metric that is weak enough so that convergence of the posterior distribution for $f$ in that space converges at rate $1/\sqrt{n}$, instead of the slower nonparametric rate  of order $n^{-\al/(2\al+1)}$. In such space one can then formulate a convergence of the posterior to a Gaussian limit, namely a nonparametric Bernstein--von Mises theorem.  Below we only define the multiscale space as it is used in the definition of the credible band and postpone details on the precise statement of convergence to Appendix \ref{sec:npbvm}.

Let us call the sequence $w=(w_l)_{l\geq 0}$ `admissible' if $w_l/\sqrt{l} \to \infty$ as $l \to \infty$. For such a sequence, let us define
\begin{equation}\label{aimezero}
\mathcal{M}_0=\mathcal{M}_0(w)=\left\lbrace x=(x_{lk})_{l,k},\ \ \lim_{l \to \infty} \max_{0\leq k < 2^l} \frac{|x_{lk}|}{w_l}=0\right\rbrace.
\end{equation}
\noindent Equipped with the norm $\di \|x\|_{\mathcal{M}_0}= \sup_{l\geq 0}\max_{0\leq k < 2^l} |x_{lk}|/w_l$, this is a separable Banach space \cite{cn14}. In a slight abuse of notation, we  write $f \in \mathcal{M}_0$ if the sequence of its Haar wavelet coefficients $(\langle f, \psi_{lk}\rangle)_{l,k}$ belongs to that space.

Let us consider a credible ball in the space $\cM_0$: recalling the definition \eqref{medest} of the median tree estimator $\hat f_{\mathcal{T}^*}$, let us choose $R_n=R_n(X)$ in such a way that
\begin{equation} \label{def:rn}
\Pi[\| f- \hat f_{\mathcal{T}^*} \|_{\cM_0(w)}\le R_n/\sqrt{n} \given X] = 1-\ga
\end{equation}
(or possibly $\ge 1-\ga$ if the equation has no exact solution, in which case the limit in the confidence statement of the next proposition is replaced by a liminf and equality by $\ge$). 

Let us define, for $R_n$ as in \eqref{def:rn}, $\sigma_n$ as in \eqref{def:sig} and $f_{\mathcal{T}^*}$ the median tree estimator \eqref{medest},
\begin{equation} \label{cred:multi}
 \cC_n^\cM = \left\{f:\ \| f-\hat f_{\mathcal{T}^*}\|_\infty\leq \sigma_n\right\}\ \bigcap\ \left\{ f:\ \|f-\hat f_{\mathcal{T}^*}\|_{\mathcal{M}_0(w)}\leq R_n/\sqrt{n}\right\}.
 \end{equation}

The next result states that $\cC_n^\cM$ is under self--similarity asympotically a confidence
band of prescribed level $1-\ga$. 

\begin{proposition}\label{prop:cmulti}
Let $0<\alpha_1<\alpha_2\leq 1$, $K>0$, $\mu>0$ and $\eta>0$. 
Let $\cC_n^\cM$ be defined by \eqref{cred:multi},  for $v_n/\log^{1/2} n\to\infty$, and $\Pi$ an OPT prior with flat initialisation up to level $l_0(n)$ that verifies $\sqrt{\log{n}}\le l_0(n)\le \log{n}/\log\log{n}$, and other than that for $l>l_0(n)$ with same parameters as the prior in Theorem \ref{contraction_rate}. First, the set $\cC_n^\cM$ is a $(1-\ga)-$credible band as, uniformly on $\alpha \in[\alpha_1,\alpha_2]$ and $f_0\in\cS(\alpha, K, \eta)\cap\cF(\alpha,K,\mu)$, 
 \[ \Pi[\cC_n^\cM \given X] = 1-\ga+ o_{P_0}(1). \]
Further, under the same conditions,
\begin{align*}
 \left|\cC_n^\cM\right|_\infty & =O_{P_{0}}\left(v_n\Bigg(\frac{\log n}{n}\Bigg)^{\alpha/(2\alpha+1)}\right), \\ 
 P_0\left[f_0\in \cC_n^\cM \right] &= 1-\ga+o(1).
 \end{align*}
\end{proposition}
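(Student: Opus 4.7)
The plan is to decompose the three claims according to the intersection structure of $\cC_n^\cM$: the supremum--norm ball is handled by Theorem \ref{confidence_set_level}, while the $\cM_0(w)$ ball is handled by the adaptive nonparametric Bernstein--von Mises (BvM) theorem of Appendix \ref{sec:npbvm}.

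\emph{Credibility.} The $\cM_0(w)$ ball has exact posterior mass $1-\ga$ by the definition \eqref{def:rn} of $R_n$. Meanwhile the supremum--norm ball has posterior mass $1-o_{P_0}(1)$ by Theorem \ref{confidence_set_level} (the credibility conclusion there), uniformly over $\cS(\al,K,\eta)\cap\cF(\al,K,\mu)$ for $\al\in[\al_1,\al_2]$. Since $\Pi[A\cap B\given X]\ge 1-\Pi[A^c\given X]-\Pi[B^c\given X]$ and is bounded above by $1-\ga$, we conclude $\Pi[\cC_n^\cM\given X]=1-\ga+o_{P_0}(1)$.

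\emph{Diameter.} Any element of $\cC_n^\cM$ is within supremum distance $\si_n$ of $\hat f_{\cT^*}$, so $|\cC_n^\cM|_\infty\le 2\si_n$. By Lemma \ref{lemma: med tree depth}, under self--similarity one has $2^{d(\cT^*)}\asymp (n/\log n)^{1/(2\al+1)}$ up to logs with $P_0$--probability tending to one, which plugged into the definition \eqref{def:sig} of $\si_n$ yields the claimed bound $|\cC_n^\cM|_\infty=O_{P_0}\bigl(v_n(\log n/n)^{\al/(2\al+1)}\bigr)$.

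\emph{Coverage.} Write
\[ P_0[f_0\in\cC_n^\cM]=P_0\!\left[\|f_0-\hat f_{\cT^*}\|_\infty\le\si_n,\ \rn\|f_0-\hat f_{\cT^*}\|_{\cM_0(w)}\le R_n\right]. \]
The supremum--norm event has $P_0$--probability $1-o(1)$ by Theorem \ref{confidence_set_level}. For the $\cM_0(w)$ event, the adaptive BvM of Appendix \ref{sec:npbvm} gives that the posterior law of $\rn(f-\hat f_{\cT^*})$ converges in $P_0$--probability (in the bounded Lipschitz metric on $\cM_0(w)$) to the law of a centered Gaussian process $\mathbb{G}$ whose coordinates in the Haar basis are independent $\cN(0,1)$--type variables matching the $P_0$--asymptotic covariance of the empirical Haar coefficients. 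The limiting distribution $\|\mathbb{G}\|_{\cM_0(w)}$ being continuous on $(0,\infty)$, this implies $R_n\to^{P_0} q_{1-\ga}$, the $(1-\ga)$--quantile of that law. On the frequentist side, a Donsker--type argument in $\cM_0(w)$ coordinate--wise on the Haar basis, combined with negligibility of the bias $\hat f_{\cT^*}-\bE[\hat f_{\cT^*}]$--free median--tree bias in the $\cM_0(w)$ norm at scale $o(\nm)$, yields $\rn(\hat f_{\cT^*}-f_0)\cl \mathbb{G}$ under $P_0$. Combining the two convergences with Slutsky's lemma gives $P_0[\rn\|f_0-\hat f_{\cT^*}\|_{\cM_0(w)}\le R_n]\to 1-\ga$, and intersecting with the high--probability supremum--norm event yields $P_0[f_0\in\cC_n^\cM]=1-\ga+o(1)$.

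The main obstacle is the coverage step, specifically matching the Bayesian and frequentist limits in the weak space $\cM_0(w)$. It relies crucially on the flat initialisation up to level $l_0(n)$ (which ensures the posterior is sufficiently diffuse on low--frequency Haar coefficients to satisfy the BvM), and on the admissible weights $w_l/\sqrt{l}\to\infty$ (which suppress the contribution of deep levels both for the Gaussian limit to live in $\cM_0(w)$ and for the median--tree truncation bias to be $o(\nm)$ in the $\cM_0(w)$ norm under self--similarity).
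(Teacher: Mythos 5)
Your proposal is correct and follows essentially the same decomposition as the paper: credibility from combining the exact $1-\gamma$ posterior mass of the $\cM_0(w)$-ball with the $1-o_{P_0}(1)$ mass of the sup-norm ball from Theorem \ref{confidence_set_level}; diameter from the inclusion $\cC_n^\cM\subset\cC_n$; and coverage from combining the asymptotic coverage $1$ of $\cC_n$ with the $1-\gamma$ coverage of the $\cM_0(w)$-ball obtained from the nonparametric BvM. The only difference is that you spell out the Slutsky/quantile-convergence argument that the paper delegates wholesale to Theorem 5 of \cite{cn14}; also note a minor slip in your commentary --- self-similarity is not needed for the $\cM_0(w)$ tightness of the median-tree centering (H\"older regularity suffices there), it is only used in the diameter and sup-norm coverage parts.
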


Proposition \ref{prop:cmulti} quite directly follows from combining Theorem \ref{confidence_set_level}, which concerns $\cC_n$ and the nonparametric BvM Theorem \ref{BVM} proved in the Appendix, which concerns the second part of the intersection in \eqref{cred:multi}. Compared to $\cC_n$ the advantage of $\cC_n^\cM$ is that it uses more `posterior information' by intersecting with the $\cM_0(w)$ credible ball, resulting in a credible ball with both credibility and confidence close to a given user--specified confidence level $1-\gamma$. By contrast, $\cC_n$ was more `conservative' in this respect, having credibility and confidence both going to $1$. The behaviour of the credible band $\cC_n^\cM$, in particular in comparison to $\cC_n$ from \eqref{confreg_def}, is illustrated in simulations in the following Section \ref{sec:sims}.


\section{Simulation study} \label{sec:sims}

We consider the credible sets $\cC_n$ and $\cC_n^\cM$ defined in \eqref{confreg_def} and \eqref{cred:multi} respectively and illustrate their coverage and diameter properties numerically through a simulated study.

We focus on a prior as in Proposition \ref{prop:cmulti}, with parameters $\Gamma=1.1$, $a=1$ and $l_0(n)=\sqrt{\log{n}}$. We take four fairly different densities $f_0$, illustrating different aspects of inference and UQ with Optional P\'olya trees:
\begin{itemize}
\item The triangular density $x\mapsto (.5+2*x)1_{0\leq x<0.5} + (1.5-2*(x-.5))1_{0.5\leq x<1}$ that is Lipschitz regular.
\item The density $$t\mapsto\frac{e^{W_t}}{\int_0^1e^{W_s}ds}$$ where $(W_t)_{t\in[0;1)}$ is a Brownian motion that is almost surely $(1/2-\delta)$--Hölder regular for  any $0<\delta<1/2$.
\item The density $$t\mapsto C\left(e^{W_t}1_{0\leq t<0.5} + c1_{0.5\leq x<1}\right\}$$ for $(W_t)_{t\in[0;1)}$ a Brownian motion and $C,c$ real numbers such that this actually defines a continuous density function. In this case, the regularity is different and of a higher order on the second half of the interval.
\item The sine density $t\mapsto 1+0.5*\sin(2\pi x)\in C^{\infty}([0;1))$.
\end{itemize}

\begin{figure}[h!]
\center
  \caption{Interior nodes $\cT_{\text{int}}^*$ of the median tree - $n=10^5$.}
  \includegraphics[width=\textwidth]{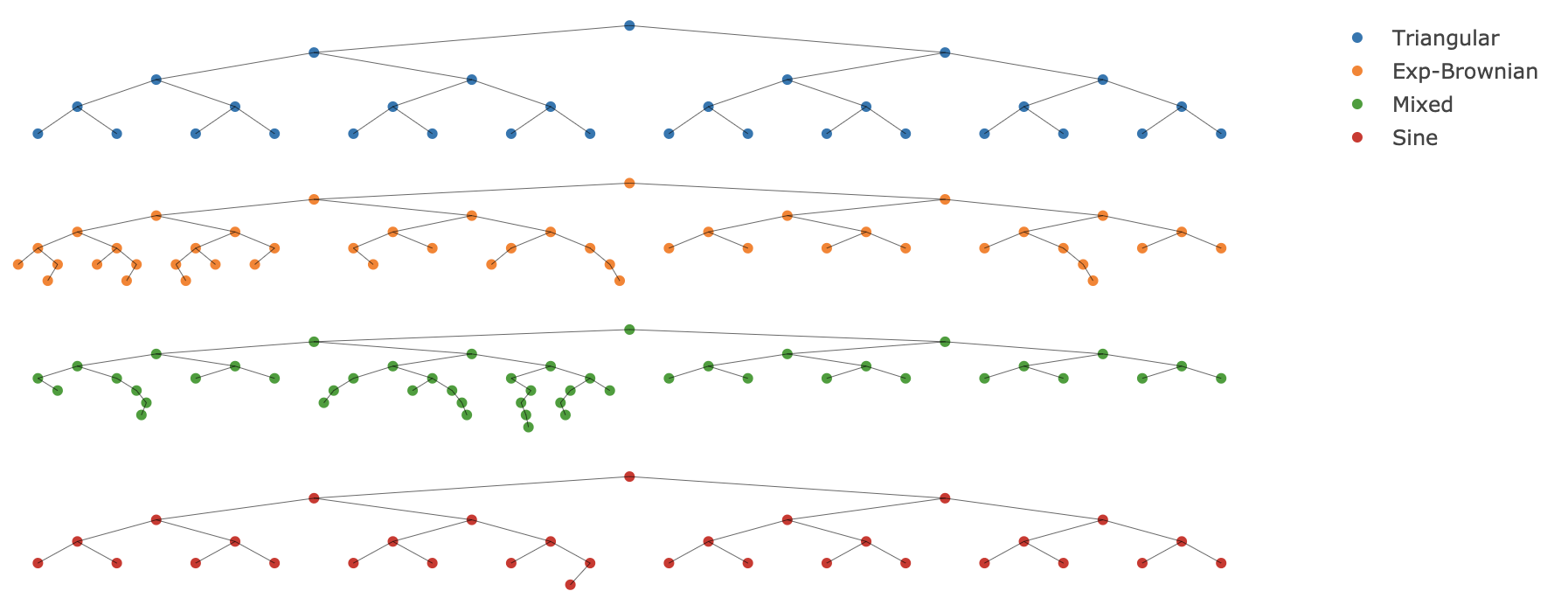}
  \label{fig: interior nodes medtree}
\end{figure}

We first illustrate the  behaviour of the median tree $\cT^*$ and the associated estimator $\hat f_{\mathcal{T}^*}$ defined in \eqref{medest} in these different situations. In Figure \ref{fig: interior nodes medtree}, we observe how this tree adapts to the regularity of the underlying sampling density $f_0$ via the interior nodes it selects. First, in the case of the smoother sine and triangular densities, fewer nodes are included, while the tree grows deeper with the other two more irregular signals. Indeed, as mentioned before and explicited in Lemma \ref{lemma: med tree depth}, the median tree can be shown to have a depth close to the oracle cut-off $L_n^*$, satisfying $2^{L_n^*}\approx n^{1/(2\al+1)}$. However, although the sine density is even more regular than the triangular one, their respective median trees have a similar behaviour and grow at the same pace. Indeed, since we use a piecewise constant tree estimator which relates to the Haar wavelet basis, our method cannot leverage additional regularities, beyond $\mathcal{C}^1[0,1)$. Finally, when it comes to the mixed density, the median tree has a spatial-dependent behaviour. It includes much more nodes in regions that corresponds to the first half of the sampling space, where the target regularity is that of the exp-Brownian density. As for the other half of the sampling space, it doesn't get deeper than $l_0(n)$. It highlights a desirable feature of tree-based methods, that is their spatial adaptivity. While we consider adaptation to global regularity in our theoretical results, one could also consider local adaptation, as was recently considered in \cite{RR21}, where  results on local adaptation for tree--based priors (among others) are obtained in a regression setting. 

In Figure \ref{fig: med tree and sigma_n}, for the four sampling densities, we illustrate the estimator $\hat f_{\mathcal{T}^*}$ (orange) and the bounds of the credible set $\mathcal{C}_n$ (red), where we took $v_n=(\log n)^{0.501}$ in \eqref{def:sig}. The estimator \eqref{medest} struggles to approximate the 'spiky' portions of the most irregular signals. Still, in any case, the credible band covers the true density $f_0$ as expected.

\begin{figure}[h!]
\center
  \caption{Median tree estimator $\hat f_{\mathcal{T}^*}$ and credible set $\mathcal{C}_n$ - $n=10^4$}
  \includegraphics[width=\textwidth]{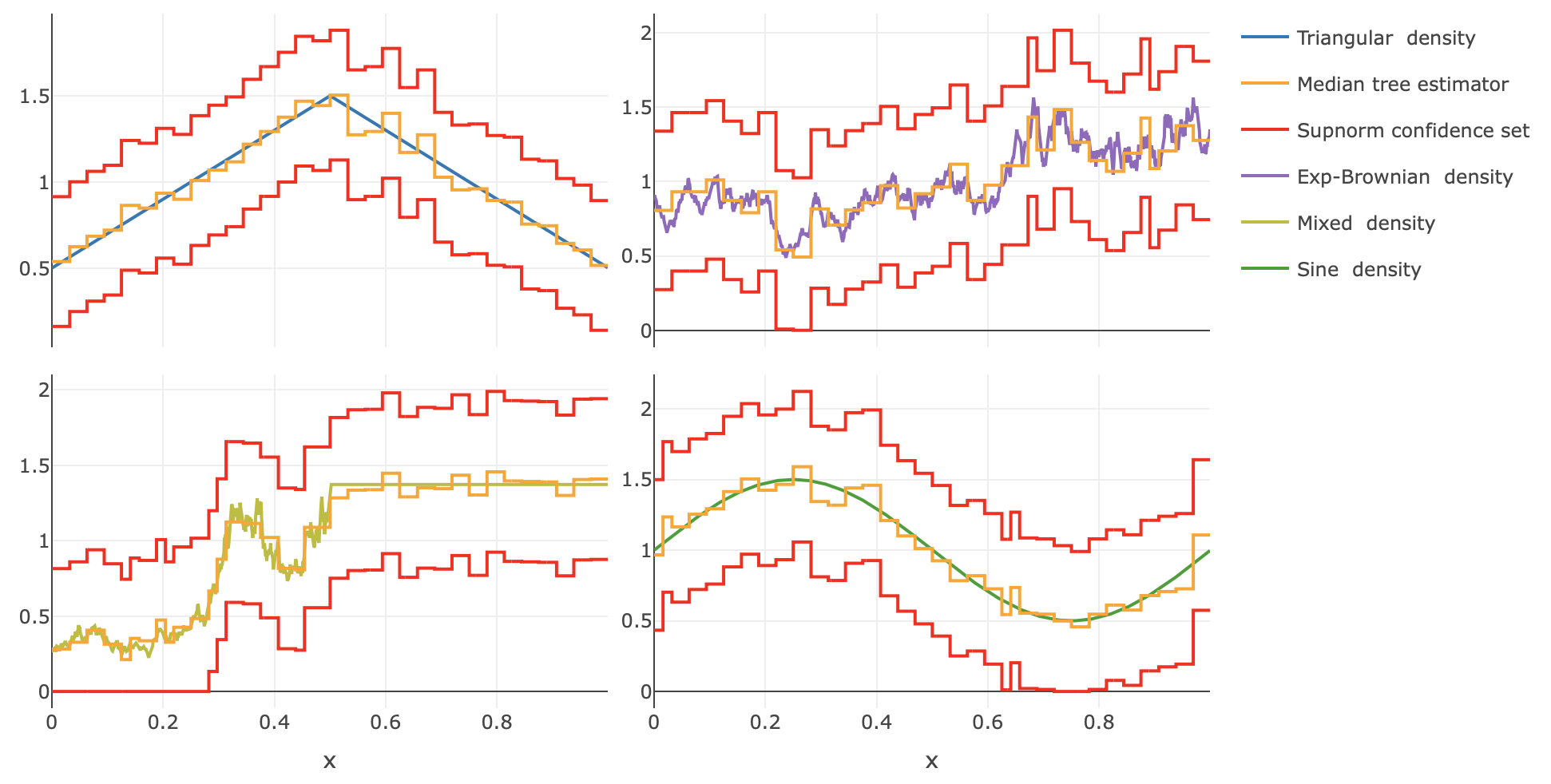}
  \label{fig: med tree and sigma_n}
\end{figure}

Then, to illustrate the intersected set $\cC_n^\cM$, defined in \eqref{cred:multi} via a multiscale condition, we sampled $10000$ draws from the posterior and plotted, in Figure \ref{fig: confidence band multiscale}, $100$ of those belonging to the confidence band (blue), for $\gamma=0.05$. Most of those samples do not seem to lie close to the bounds of  $\mathcal{C}_n$ which is consistent with the fact that $\mathcal{C}_n$, resp. $\cC_n^\cM$, has a posterior mass close to $1$, respectively $0.95$. Though our illustrations concern the intersection of $\cC_n^\cM$ with the support the posterior, via the representation of posterior draws, it appears that $\cC_n^\cM$ is actually smaller than $\mathcal{C}_n$.

\begin{figure}[h!]
\center
  \caption{Posterior sample in the confidence band $\cC_n^\cM$ - $\gamma=0.05$ and $n=10^4$.}
  \includegraphics[width=\textwidth]{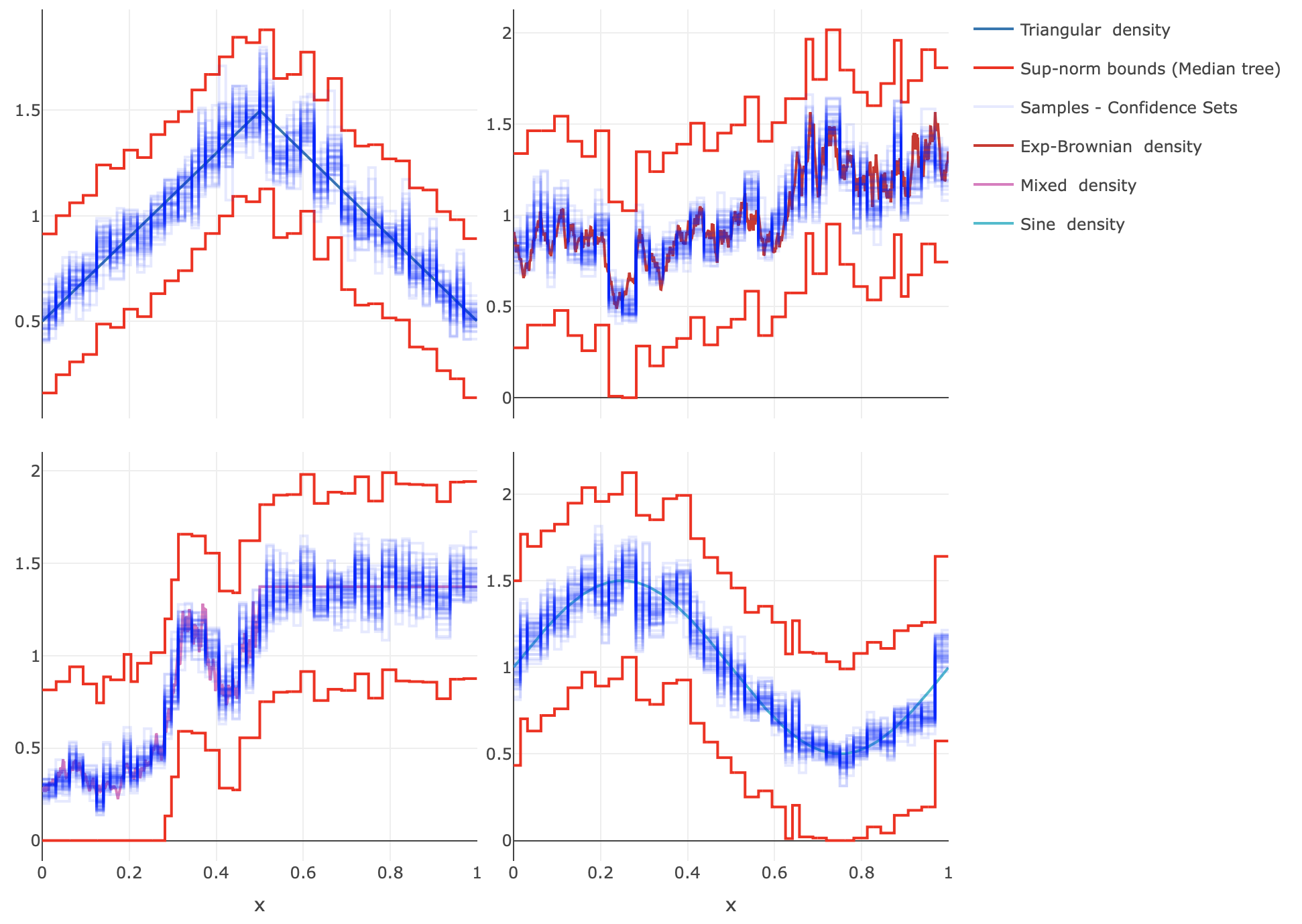}
  \label{fig: confidence band multiscale}
\end{figure}

As for the confidence sets  $\cF_n$ on the cumulative distribution function $F_0(\cdot)=\int_0^{\cdot} f_0(t)dt$, we illustrate an example in Figure \ref{fig: cdf cs} for a smaller sample size of $n=10^3$ and $\gamma=0.95$. The bounds of $\cF_n$ follow tightly the true signal and the set covers it, in spite of the fewer number of observations available compared to previous plots. Indeed, following the discussion after Theorem \ref{thm-Donsker}, $\cF_n$ has a radius decreasing at the parametric rate $\sqrt{n}^{-1}$.

\begin{figure}[h!]
\center
  \caption{Posterior samples in the confidence set $\cF_n$ - $n=10^3$.}
  \includegraphics[scale=0.35]{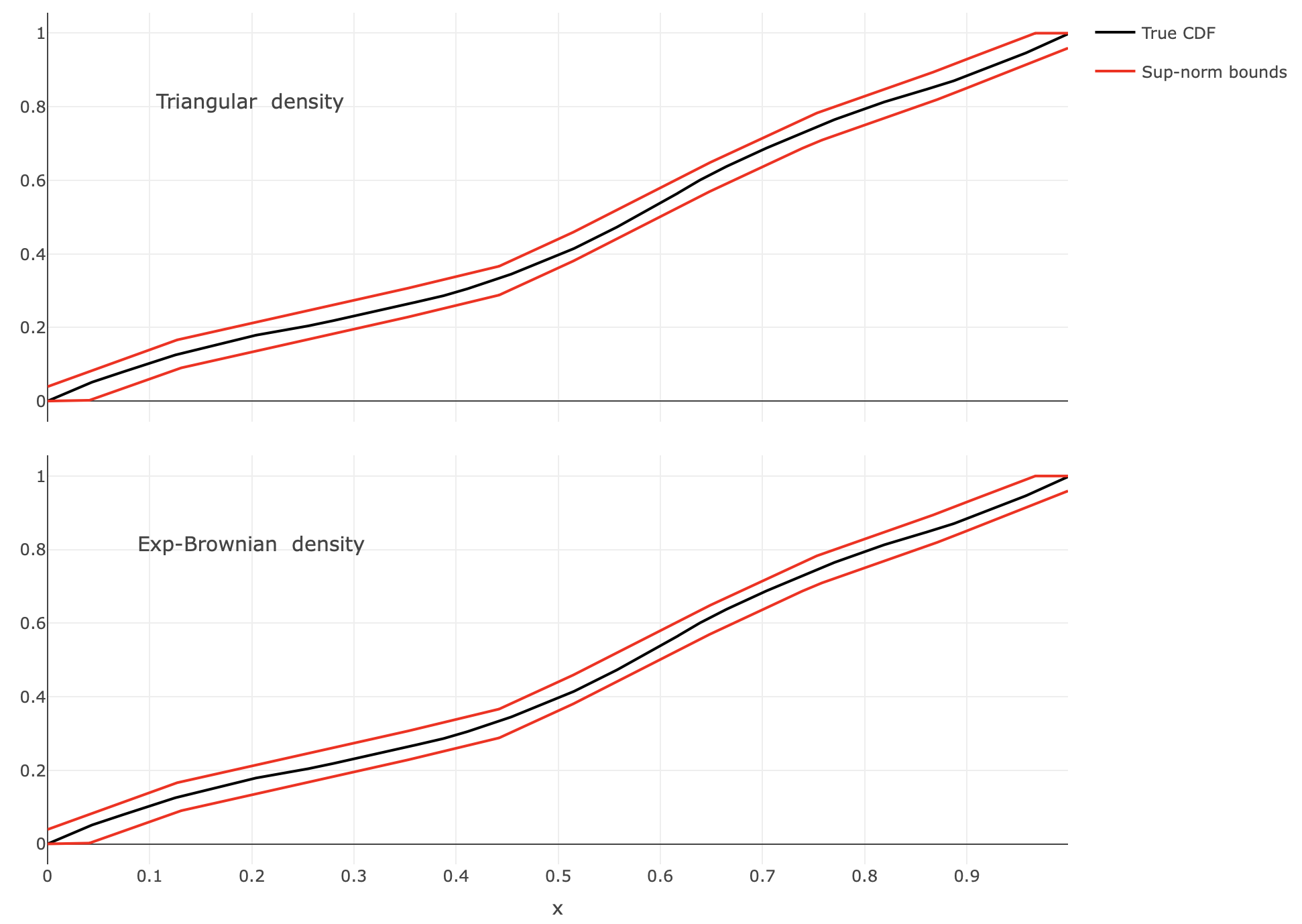}
  \label{fig: cdf cs}
\end{figure}

\begin{table}[]
\footnotesize
\centering
\begin{tabular}{l|l|l|l|l|}
\hline
\multicolumn{1}{|l|}{Chosen significance $\gamma$}                                     & $0.99$   & $0.95$   & $0.9$     & $0.85$     \\ \hline
                                                                                       & \multicolumn{4}{c|}{$n=10^4$}                \\ \hline
\multicolumn{1}{|l|}{Credibility of $C_n^{L^{\infty}}$}                                & $0.99$   & $0.95$   & $0.9$     & $0.85$     \\ \hline
\multicolumn{1}{|l|}{Credibility of $\mathcal{C}_n^{\mathcal{M}}$}                     & $0.99$  & $0.95$  & $0.8981$  & $0.85$   \\ \hline
\multicolumn{1}{|l|}{Credibility of $C_n^{L^{\infty}}\cap\mathcal{C}_n^{\mathcal{M}}$} & $0.9801$ & $0.9029$ & $0.8108$  & $0.725$   \\ \hline
\multicolumn{1}{|l|}{Credibility of the intersection if independence}                  & $0.9801$ & $0.9025$ & $0.81$  & $0.7225$   \\ \hline
                                                                                       & \multicolumn{4}{c|}{$n=10^5$}                \\ \hline
\multicolumn{1}{|l|}{Credibility of $C_n^{L^{\infty}}$}                                & $0.99$   & $0.95$   & $0.9$     & $0.85$     \\ \hline
\multicolumn{1}{|l|}{Credibility of $\mathcal{C}_n^{\mathcal{M}}$}                     & $0.9894$ & $0.9494$ & $0.8994$  & $0.8494$   \\ \hline
\multicolumn{1}{|l|}{Credibility of $C_n^{L^{\infty}}\cap\mathcal{C}_n^{\mathcal{M}}$} & $0.9801$ & $0.9028$ & $0.8118$  & $0.7254$   \\ \hline
\multicolumn{1}{|l|}{Credibility of the intersection if independence}                  & $0.9795$ & $0.9019$ & $0.8095$ & $0.722$ \\ \hline
\end{tabular}
\caption{Credibility of sets $C_n^{L^{\infty}}$ and $\mathcal{C}_n^{\mathcal{M}}$ for the triangular density $f_0$.}
\label{table: credibility of sets}
\end{table}

We end this section with an illustration of a phenomenon that was noticed and established in \cite{ray17} for a spike-and-slab prior in a regression setting. Namely, since we constructed an adaptive $(1-\gamma)$-confidence bands whose diameter in supnorm shrinks at an almost optimal rate, one may wonder how much it differs from the $(1-\gamma)$-credible band in the supremum norm $C_n^{L^{\infty}}\coloneqq\left\{f:\ \norm{f-\hat f_{\mathcal{T}^*}}_{\infty}\leq Q_n(\gamma)\right\}$, where $Q_n(\gamma)$ is chosen such that $\Pi\left[C_n^{L^{\infty}} | X\right]\geq 1-\gamma$. In a white noise regression setting,   \cite{ray17} proved that these two sets are asymptotically independent (see Theorem 5.3 therein), in the sense that $\Pi\left[C_n^{L^{\infty}}\cap\mathcal{C}_n^{\mathcal{M}} | X\right]\overset{P_{f_0}}{\to}(1-\gamma)^2$. As above, we sampled $10^4$ draws from the posterior to estimate de posterior credibility of the different sets, which we present in Table \ref{table: credibility of sets}. The results seem to indicate that the independence phenomenon of the credible sets as described above still hold in the present density estimation setting, as the margin of difference observed is of the order of the Monte-Carlo error. Intuitively speaking, this independence under the posterior if true (at least asymptotically) would mean that the two credible sets reflect {\em different} aspects of the posterior distribution. Although this result from \cite{ray17} is seemingly verified in density estimation with an OPT prior, we did not investigate this question from a theoretical point of view in the present paper; we expect the proof to be significantly more involved than in the (conjugate) Gaussian white noise setting and we leave this point for future work.

\section{Discussion}
\label{sec:disc}

In the present work we establish an inference theory for Optional P\'olya trees introduced in \cite{wm10} by deriving posterior contraction rates as well as confidence bands for the problem of uncertainty quantification. By contrast, only posterior consistency had been previously obtained until now for such priors. Although we focus on this class of prior distributions, we point out that our proofs and results also apply to different tree priors, such as ones conditioning on the number of leaves as in Example \ref{ex:cond}. The results and proofs highlight how beneficial a multiscale approach to study tree-based methods, as introduced in \cite{cr21}, can be.

As for related priors in density estimation, non-adaptive contraction rates were obtained in \cite{c17} for Pólya trees for carefully chosen regularity-dependent parameters of the Beta random variables. The addition of a hyperprior on the tree structure in OPTs allows for adaptation, so that the Beta parameters can be set as an arbitrary constant (a similar comment can be done about Spike-and-slab Pólya trees \cite{cm21}). The Beta variables in the Pólya-like mass allocation mechanism could be replaced by another distribution, but we sticked to them for simplicity of analysis and presentation.

In Section \ref{sec:sims}, we mentioned some further results on OPTs to be investigated. First, tree-based methods have a natural ability to adapt to the local regularity. While this has been proved in \cite{RR21} in a regression setting, this should also be the case with OPTs in density estimation. Another expected advantage of trees is that in high-dimensional settings, they induce a `tree--structured' sparsity, which could help in addressing the curse of dimensionality. As original OPTs \cite{mw11} have been introduced in arbitrary dimensions, it is natural to further our theoretical analysis in this direction, Also, the interesting alleged posterior independence of sets $C_n^{L^{\infty}}$ and $\mathcal{C}_n^{\mathcal{M}}$ still needs to be proven and would confirm that the two constructions rely on somewhat different aspects of the posterior distribution in density estimation too.

Finally, the almost-optimal rates we obtain are valid for H\"older regularities up to $1$. This is related to the fact that samples of OPTs are piecewise constant on some random partition. In order to achieve faster rates for smoother densities, one possibility explored in \cite{lineroyang18}   consists in replacing  `hard' (histogram) trees with `smooth' trees. 
Another promising possibility is to look at forests priors. Indeed, the aggregation of many trees tends to result in estimators that are more `regular' and thereby more suitable to the estimation of smoother objects: for frequentist estimators in regression, this was noted in \cite{ArlotGenuer14, mourtada2018minimax} for regularities $\alpha\le 2$. The recent work \cite{TR20} establishes that, when an $L^1$ or Hellinger loss on densities is considered,  forests of P\'olya trees enable adaptation to arbitrary regularities $\alpha$. This will be investigated elsewhere.


\section{Proof of the main results} \label{sec:proofs}
Below, the depth $L_n=L_n(\alpha)$ defined as
 \begin{equation}\label{cutoff}2^{L_n} = c_0 (n/\log n)^{\frac{1}{1+2\alpha}},\end{equation} for some $c_0>0$, will be helpful in our theoretical analysis.
Also, $C$ stands for a generic constant whose precise value we do not track and can change from line to line.

\subsection{Proof of Theorem \ref{contraction_rate}}

Let's write $T_n = \{\mathcal{T} \given d\left(\mathcal{T}\right) \leq L_n,\  S(f_0, \tau) \subset \mathcal{T}   \}$, $S(f_0, \tau)$ as in Lemma \ref{huge signals taken}, and $\mathcal{E}_n = \{f:\ \exists \mathcal{T}\in T_n,\ f \text{ piecewise constant on } I_{\cT}  \}$. Moreover, we write, for $L_n$ as in \eqref{cutoff} and any tree $\mathcal{T}\in \mathbb{T}_n$, the following othogonal projections of $f_0$: $f_0^{\mathcal{T}}$ onto the span $\{\psi_{lk}\given (l,k)\in \mathcal{T}\}$, $f_0^{L_n^c}$ onto $\{\psi_{lk}\given l>L_n\}$, and $f_0^{\mathcal{T} ^c, L_n}$ onto the orthocomplement of the union of the two last spans.  For $f_0\in\mathcal{C}^\alpha[0,1)$, $0<\alpha\leq 1$, we have in particular that
\begin{equation}\label{eq: bias_L_n} ||f_0^{L_n^c}||_\infty\leq \sum_{l>L_n}2^{l/2} \underset{0\leq k<2^l}{\max} |f_{0,lk}| \lesssim \sum_{l>L_n} 2^{-l\alpha} \lesssim \left(n^{-1} \log n\right)^{\frac{\alpha}{1+2\alpha}}, \end{equation} (see for instance \cite{c17}).
Then, for any density $f_0$, we have the upper bound, for $\mathcal{B}_M$ as in Lemma \ref{obs_event},


\begin{align*}
&  \Pi\left[ \| f-f_0\|_\infty>M_n\epsilon_n \given X \right] \\
& \le 
\Pi[\mathcal{E}_n^c \given X]\mathds{1}_{\mathcal{B}_M}  + \Pi\left[ \norm{f-f_0}_\infty>M_n\epsilon_n,\  f\in \mathcal{E}_n \given X \right] 
 \mathds{1}_{\mathcal{B}_M} + \mathds{1}_{\mathcal{B}_M^c}.
\end{align*}
On one hand, Lemma \ref{obs_event} guarantees that $\mathbb{P}_0\left(\mathcal{B}_M^c\right) =o(1)$ for $M$ large enough and Lemmas \ref{not too deep} and \ref{huge signals taken} ensures that 
\[E_{f_0}\left\{\Pi_f[\mathcal{E}_n^c \given X]\mathds{1}_{\mathcal{B}_M}\right\}=o(1).\]
On the other hand, we also have the inequality $\norm{f-f_0}_\infty \leq \norm{f-f_0^{\mathcal{T}}}_\infty + \norm{f_0^{\mathcal{T} ^c, L_n}}_\infty + \norm{f_0^{L_n^c}}_\infty$. This allows us to control the last term in the above upper bound by mean of the Markov inequality:
\begin{align}
\label{Markov}
\begin{split}
\Pi\Big[ & f\in \mathcal{E}_n,\quad\norm{f-f_0}_\infty>M_n\epsilon_n \given X \Big]\mathds{1}_{\mathcal{B}_M}
 \le (M_n\epsilon_n)^{-1} \int_{\mathcal{E}_n} \norm{f-f_0}_\infty d\Pi[f, \mathcal{T} \given X] \mathds{1}_{\mathcal{B}_M}\\
&\le  (M_n\epsilon_n)^{-1}\Big[ \int_{\mathcal{E}_n} \norm{f-f_0^{\mathcal{T}}}_\infty d\Pi[f, \mathcal{T} \given X] \mathds{1}_{\mathcal{B}_M}  \\
& \qquad \qquad +\int_{\mathcal{E}_n} \norm{f_0^{\mathcal{T} ^c, L_n}}_\infty d\Pi[\mathcal{T} \given X] \mathds{1}_{\mathcal{B}_M} +\norm{f_0^{L_n^c}}_\infty \Big],
\end{split}
\end{align}
and \eqref{eq: bias_L_n} ensures that the last term above is $o(1)$. Similarly, for the second term, using the definition of $\mathcal{E}_n$ and denoting $L^*$ the largest integer such that $2^{-L^*(\alpha+1/2)}\geq n^{-1/2} \log n$, 

\begin{align*}
||f_0^{\mathcal{T} ^c, L_n}||_\infty &\leq \sum_{l\leq L_n}2^{l/2} \underset{k: (l,k)\not\in\mathcal{T}}{\max} |f_{0,lk}| \lesssim \sum_{l\leq L_n}2^{l/2} \left( \underset{0\leq k<2^l}{\max} |f_{0,lk}| \wedge \log n\ /\ \sqrt{n} \right)\\
&\lesssim \sum_{l\leq L^*} 2^{l/2} \frac{\log n}{\sqrt{n}} + \sum_{L^*< l\leq L_n} 2^{l/2}2^{-l(1/2+\alpha)}\lesssim 2^{L^*/2}\frac{\log n}{\sqrt{n}} + 2^{-L^*\alpha}\lesssim 2^{-L^*\alpha}.
\end{align*}
This allows us to conclude that the second term in the bound \eqref{Markov} is also of the order $o(1)$. It remains to bound the first term in the bound that is also of order $o(1)$ according to Lemma \ref{no distortion}. This concludes our proof. \\

It remains to prove the different lemmas we used to upper bound the different terms above.

\begin{lemma}
\label{not too deep}
Suppose  $f_0\in\mathcal{F}(\alpha,K,\mu)$, for some $\mu>0$, $0<\alpha\leq1$, $K>0$, and assume $f$ follows a prior as in Theorem \ref{contraction_rate}. Then, for any $M>0$ as in Lemma \ref{obs_event} and $\Gamma$ large enough, on events $\mathcal{B}_M$, we have, as $n\to\infty$,
\[\Pi[d(\mathcal{T}) > L_n \given X] \to 0,\]
where $L_n$ is as in \eqref{cutoff}.
\end{lemma}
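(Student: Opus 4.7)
\textbf{The plan is} to exploit the explicit $\operatorname{GW}(p^X)$ description of the posterior on trees from Proposition \ref{prop2}. By the Markov property of the Galton--Watson process, $\Pi[\veps\in\cT\given X] = \prod_{i=0}^{l-1} p^X_{\veps^{[i]}}$ for any node $\veps$ of depth $l$. Since a tree has depth $>L_n$ iff it contains at least one node of depth $L_n+1$, a union bound over the $2^{L_n+1}$ such nodes gives
\[ \Pi[d(\cT)>L_n\given X] \;\le\; \sum_{\veps:\,|\veps|=L_n+1}\; \prod_{l=0}^{L_n} p^X_{\veps^{[l]}}, \]
and it remains to show this sum vanishes on $\mathcal{B}_M$ for $\Gamma$ large enough.

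\textbf{Next,} I would bound each $p^X_\veps$ through the recursion \eqref{def:rox}, using $p^X_\veps \le A/B$ with $A = p_\veps(1-p_{\veps 0})(1-p_{\veps 1})\nu^X_\veps$ and $B = (1-p_\veps)(1-p^X_{\veps 0})(1-p^X_{\veps 1})$. Running the recursion bottom--up from the base case $p^X_\veps=0$ at $|\veps|=L_{\max}$, one verifies inductively that $p^X_{\veps b}\le 1/2$ for $\veps$ of depth $l$ beyond a threshold $l^\ast=l^\ast(\Gamma,n)$ at which $\Gamma^{-l}\nu^X_\veps$ becomes sufficiently small, and hence that $p^X_\veps \le C\Gamma^{-l}\nu^X_\veps$; at shallow depths $l<l^\ast$, the trivial bound $p^X_\veps\le 1$ is used. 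In parallel, Stirling's approximation applied to the Beta functions in \eqref{def:nux} gives, with $n_\veps = N_X(I_\veps)$ and $q_\veps = N_X(I_{\veps 0})/n_\veps$,
\[ \log \nu^X_\veps \;=\; -n_\veps\,\kl(q_\veps\|1/2)\,-\,\tfrac12\log n_\veps\,+\,O(1), \]
where $\kl$ denotes the binary Kullback--Leibler divergence. On $\mathcal{B}_M$, the concentration $N_X(I_\veps)\approx nP_0(I_\veps)$ together with the H\"older bound $|P_0(I_{\veps 0})/P_0(I_\veps) - 1/2| = O(2^{-l\alpha})$ imply $\nu^X_\veps \le n^C$ uniformly over nodes $\veps$ with $|\veps|\le L_{\max}$.

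\textbf{Combining} these, along any path down to depth $L_n+1$,
\[ \prod_{l=0}^{L_n} p^X_{\veps^{[l]}} \;\le\; \prod_{l=l^\ast}^{L_n} C\,\Gamma^{-l}\nu^X_{\veps^{[l]}} \;\le\; \exp\!\Big(-\tfrac{L_n^2}{2}\log\Gamma + C'L_n\log n\Big), \]
which, since $L_n\asymp\log n$, equals $\exp(-(c\log\Gamma - C')(\log n)^2)$ and tends to $0$ for $\Gamma$ large enough; the remaining union-bound factor $2^{L_n+1}$ is merely polynomial in $n$ and does not alter the conclusion. \textbf{The main obstacle} is the careful inductive control of $p^X_\veps$ at the transition between shallow levels, where the posterior may strongly favour splitting (so $p^X_\veps$ can be close to $1$), and deeper levels where splits become disfavoured; the two-regime analysis above resolves this, as the product is ultimately dominated by the deep factors which carry the super-polynomial decay $\Gamma^{-L_n^2/2}$, while shallow factors contribute at most a bounded multiplicative constant.
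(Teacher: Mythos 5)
Your route — a union bound over nodes at depth $L_n+1$ combined with the $\operatorname{GW}(p^X)$ product formula and a bottom--up induction showing $p^X_\veps\le C\Gamma^{-l}\nu^X_\veps$ past some threshold — is a genuinely different decomposition from the paper's, which instead compares a tree $\cT$ of depth $l>L_n$ to the tree $\cT^-$ obtained by removing a deepest leaf pair and then sums over the (at most $2^{l-1}$-to-$1$) map $\cT\mapsto\cT^-$. Both are in principle viable. However, your argument as written has a quantitative gap that invalidates the claimed super-polynomial decay.

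The gap is in the assertion that $\nu^X_\veps\le n^C$ uniformly over all nodes $\veps$ with $|\veps|\le L_{\max}$. That bound is false for $|\veps|<L_n$. Indeed, writing $v=N_X(I_{\veps0})-N_X(I_{\veps1})$ and $\tilde n=N_X(I_\veps)$, the analysis of $2^{\tilde n}Q_1$ shows $\nu^X_\veps\lesssim \exp(v^2/\tilde n + O(1))\cdot Q_2$, and on $\cB_M$ the \emph{bias} contribution to $v$ is $|v|\lesssim n\,|P_0(I_{\veps0})-P_0(I_{\veps1})|+2MM_{n,l}\lesssim nK2^{-l(1+\alpha)}+M_{n,l}$, so $v^2/\tilde n\lesssim K^2 n 2^{-l(1+2\alpha)}+\log n$. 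The term $n2^{-l(1+2\alpha)}$ is $\Theta(\log n)$ only when $l\ge L_n$ (this is exactly how $L_n$ is calibrated in \eqref{cutoff}); for $l<L_n$ it grows, reaching $\Theta(n)$ at $l=0$, so $\nu^X_\veps$ can be of order $e^{cn}$ at shallow depths. Consequently the threshold $l^*$ at which $\Gamma^{-l}\nu^X_\veps$ becomes small enough to propagate $p^X_\veps\le 1/2$ is not $\Theta(\log n/\log\Gamma)$; instead $l^*=L_n-O(1)$ (the $O(1)$ depending on $\Gamma$ alone), so the product $\prod_{l=l^*}^{L_n}$ has only a bounded number of factors and carries decay of order $\Gamma^{-cL_n}n^{C}$, not $\Gamma^{-L_n^2/2}$. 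The paper sidesteps this entirely: by bounding $\Pi[\cT\given X]/\Pi[\cT^-\given X]$, it only ever evaluates $\nu^X_\veps$ at the depth $l-1$ of the removed leaves, and since the comparison is restricted to $d(\cT)=l>L_n$, this depth is always $\ge L_n$ where the polynomial bound on $\nu^X_\veps$ does hold.

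With the corrected estimate, your argument still closes: using the trivial bound $p^X_\veps\le1$ for $l<L_n$ and $p^X_{\veps^{[L_n]}}\le C\Gamma^{-L_n}\nu^X_{\veps^{[L_n]}}\le C\Gamma^{-L_n}n^{\bar C}$ alone (noting the induction base $p^X_\veps=0$ at $l=L_{\max}$ and that the inductive step $p^X_{\veps b}\le1/2\Rightarrow p^X_\veps\le1/2$ holds for $l\ge L_n$ once $\Gamma$ is large), the union bound gives $\Pi[d(\cT)>L_n\given X]\lesssim 2^{L_n}\Gamma^{-L_n}n^{\bar C}$, which is $o(1)$ for $\Gamma\ge 2^{1+\bar C(1+2\alpha)}$ — the same condition and polynomial decay as the paper obtains. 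So the route is recoverable, but the intermediate assertion of a uniform polynomial control of $\nu^X_\veps$ and the resulting $\Gamma^{-L_n^2/2}$ decay should be removed.
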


\begin{proof}
Let $\mathcal{T}$ be a tree of depth $L_n<d\left(\mathcal{T}\right)=l\leq L_{\text{max}}$. Then, for \[\Tilde{k}=\underset{(2k,l)\in \mathcal{T}}{\min} k,\qquad \epsilon=\epsilon\left(\Tilde{k},l-1\right),\]  let $\mathcal{T}^-$ be the corresponding tree whose nodes $(l, 2\Tilde{k})$ and $(l, 2\Tilde{k}+1)$ have been removed, i.e. $\mathcal{T}=\mathcal{T}^-\cup\left\{(l, 2\Tilde{k}),(l, 2\Tilde{k}+1)\right\}$. From  \eqref{def:nux} and \eqref{def:rox}, we have

\begin{align}
\label{T- to T}
\begin{split}
\Pi[\mathcal{T} \given X] &= \Pi[\mathcal{T}^- \given X] \frac{p_{\epsilon}^{X}}{1-p_{\epsilon}^{X}}\left(1-p^X_{\epsilon_0}\right)\left(1-p^X_{\epsilon1}\right)\\
&= \Pi[\mathcal{T}^- \given X]    p_{\epsilon}    \frac{\left(1-p_{\epsilon_0}\right)\left(1-p_{\epsilon1}\right)}{1-p_{\epsilon}}  \nu_\veps^X\\
&\leq \left(1-\Gamma^{-L_n}\right)^{-1} \Pi[\mathcal{T}^- \given X]  \frac{ 2^{N_X(I_{\veps})}}{\Gamma^{l+1}}  
\underbrace{  \frac{B(a+N_X(I_{\veps0}),a+N_X(I_{\veps1}))}{
B(a,a)} }_{\eqqcolon Q}.
\end{split}
\end{align}
Then, from Lemma \ref{bound_gamma}, we have for $\Tilde{n}_0=N_{X}\left(I_{\epsilon0}\right)$, $\Tilde{n}_1=N_{X}\left(I_{\epsilon1}\right)$ and $\Tilde{n}=N_{X}\left(I_{\epsilon}\right)$, that

\[
Q \lesssim
\underbrace{ 
\frac{\left(2a+\Tilde{n}_1-1/2\right)^{\Tilde{n}_1}	    \left(2a+\Tilde{n}_2-1/2\right)^{\Tilde{n}_2}}{ \left(2a+\Tilde{n}-1/2\right)^{\Tilde{n}} } 
}_{\eqqcolon Q_1}
\underbrace{ 
\frac{\left(2a+\Tilde{n}_1-1/2\right)^{a-1/2}	    \left(2a+\Tilde{n}_2-1/2\right)^{a-1/2}}{ \left(2a+\Tilde{n}-1/2\right)^{2a-1/2} } 
}_{\eqqcolon Q_2}.
\]
Under our assumptions on $f_0$, on the event $\mathcal{B}_M$ and for $n$ large enough, 
\[n_{X}\left(I_{l,k}\right) \geq \frac{\mu}{2}n2^{-l} \to\infty\]
for any $l\leq L_{\text{max}}$. Under the same conditions,
\[ \left|\Tilde{n}_1-\Tilde{n}_2\right| \leq n\left| P_0(I_{\veps0}) - P_0(I_{\veps1}) \right| + 2M M_{n,l}\leq nK2^{-l(1+\alpha)} +  2MM_{n,l}.\]
The last inequality stems from the fact that $f_0$ is $\alpha$-Hölder regular. Therefore, on $\mathcal{B}_M$, for $n$ large enough, if we note $v_{\Tilde{n}_1, \Tilde{n}_2}= \Tilde{n}_1-\Tilde{n}_2$, since $\Tilde{n}=\Tilde{n}_1+\Tilde{n}_2$ and $\log(1+x)\leq x$ for $x>-1$,

\begin{align*}
Q_1 &= \exp \Bigg( \Tilde{n}_1 \log\left(\frac{1}{2}+\frac{\Tilde{n}_1-\Tilde{n}_2+2a-1/2}{2(2a-1/2+\Tilde{n})}\right) + \Tilde{n}_2 \log\left(\frac{1}{2}-\frac{\Tilde{n}_1-\Tilde{n}_2-2a+1/2}{2(2a-1/2+\Tilde{n})}\right) \Bigg)  \\
& = \frac{1}{2^{\Tilde{n}}}  \exp \left( \Tilde{n}_1 \log\Big( 1+\frac{v_{\Tilde{n}_1, \Tilde{n}_2}+2a-1/2}{2a-1/2+\Tilde{n}} \Big) + \Tilde{n}_2 \log\Big(1-\frac{v_{\Tilde{n}_1, \Tilde{n}_2}-2a+1/2}{2a-1/2+\Tilde{n}}\Big) \right)\\
&\leq \frac{1}{2^{\Tilde{n}}}  \exp \left( \frac{v_{\Tilde{n}_1, \Tilde{n}_2}^2}{2a-1/2+\Tilde{n}}  +\frac{\Tilde{n}(2a-1/2)}{2a-1/2+\Tilde{n}} \right) \le  \frac{C}{2^{\Tilde{n}}}  \exp \left(  \frac{8K^2n^2 2^{-2l(1+\alpha)}}{\mu n2^{-l}}   + \frac{16M^2M_{n,l}^2}{\mu n2^{-l}} \right) \\
&\leq  \frac{C}{2^{\Tilde{n}}}  \exp \left( \Big(8K^2\mu^{-1}c_0^{-1-2\alpha}  + 32M^2(\mu \log 2)^{-1}\Big)\log n\right).
\end{align*}
The last inequality stems from $l>L_n$ and the definition of $L_n$. The last factor is even easier to control as, on $\mathcal{B}_M$,

\[
Q_2 \lesssim \left[n2^{-l}\right]^{-1/2} \lesssim n^{-\frac{\alpha}{1+2\alpha}} \log(n)^{-\frac{1/2}{1+2\alpha}}.
\]
Finally, this leads us to
\[
\Pi[\mathcal{T} \given X] =o\left( \Pi[\mathcal{T}^- \given X] \frac{n^{\left(8K^2\mu^{-1}c_0^{-1-2\alpha}  + 32M^2(\mu \log 2)^{-1}-\frac{\alpha}{1+2\alpha}\right)}}{\Gamma^{l}}\right)
\]
uniformly on $\mathcal{T}$ such that $L_n<d\left(\mathcal{T}\right)=l\leq L_{\text{max}}$.
The application $\mathcal{T}\longrightarrow \mathcal{T^-}$ defined above is surjective and  is such that each tree $\mathcal{T^-}$ is the image of at most $2^{l-1}$ trees $\mathcal{T}$. Then, the event of interest verifies for $\Gamma>2$ and $\bar{C}=8K^2\mu^{-1}c_0^{-1-2\alpha}  + 32M^2(\mu \log 2)^{-1}-\frac{\alpha}{1+2\alpha}$,

\begin{align}
\label{bnd_depth_prob}
\begin{split}
& \Pi[d(\mathcal{T})  > L_n \given X] = \sum_{l=L_n+1}^{L_\text{max}} \Pi[d(\mathcal{T}) = l \given X] =  \sum_{l=L_n+1}^{L_\text{max}}  \sum_{\mathcal{T}: d(\mathcal{T}) = l} \Pi[ \mathcal{T} \given X] \\
&= o\left( \sum_{l=L_n+1}^{L_\text{max}}  \sum_{\mathcal{T}: d(\mathcal{T}) = l} \Pi[\mathcal{T}^- \given X] \frac{n^{\bar{C}}}{\Gamma^{l}} \right) 
 =o\left( \sum_{l=L_n+1}^{L_\text{max}}  \sum_{\mathcal{T^-}} \Pi[\mathcal{T}^- \given X] \frac{2^l n^{\bar{C}}}{\Gamma^{l}} \right)
=o\left( \frac{2^{L_n} n^{\bar{C}}}{\Gamma^{L_n}}\right)
\end{split}
\end{align}
which is $o(1)$ whenever
$\{\log \Gamma/\log 2 -1\}/(1+2\alpha)\ge \bar{C},$ 
that is, if $\Gamma\geq 2^{1+\bar{C}(1+2\alpha)}$.
\end{proof}

\begin{lemma}
\label{huge signals taken}
Under the same assumptions on $f_0$ as in Lemma \ref{not too deep}, for $\Pi$ as in Theorem \ref{contraction_rate} and on the events $\mathcal{B}_M$ from Lemma \ref{obs_event}, for $\tau>0$ large enough and $L_n$ as in \eqref{cutoff}, the set
\[S(f_0, \tau)\coloneqq \left\{ (l,k): |f_{0,lk}|  \geq \tau\frac{\log n}{\sqrt{n}} \right\}\]
satisfies, as $n\to\infty$,
\[\Pi[ \{\mathcal{T}: S(f_0, \tau) \not\subset \mathcal{T}_{\text{int}}\} \given X] \to 0.\]
\end{lemma}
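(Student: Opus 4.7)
My strategy would mirror that of Lemma \ref{not too deep} but in the reverse direction: I want to show that posterior splits are overwhelmingly present at large-signal nodes. The key starting point is Proposition \ref{prop2}, which tells us that the posterior on trees is itself $\operatorname{GW}(p^X)$. By the Galton--Watson structure, $(l,k)$ is internal iff each ancestor (including $(l,k)$ itself) splits, and these events are independent, so for $\epsilon \equiv \epsilon(k,l)$,
\[\Pi[(l,k)\in\cT_{\text{int}}\given X] \;=\; \prod_{i=0}^{l} p^X_{\epsilon^{[i]}}.\]
A union bound then gives
\[\Pi[S(f_0,\tau)\not\subset\cT_{\text{int}}\given X] \;\le\; \sum_{(l,k)\in S(f_0,\tau)} \sum_{i=0}^{l}\bigl(1-p^X_{\epsilon^{[i]}}\bigr),\]
reducing the problem to showing that every $p^X_{\epsilon^{[i]}}$ along such a path is extremely close to $1$. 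Note that the H\"older bound $|f_{0,lk}|\lesssim K 2^{-l(1/2+\alpha)}$ forces $l \le L^* = O(\log n)$ for every $(l,k)\in S(f_0,\tau)$, so $|S(f_0,\tau)| \le 2^{L^*+1}$ is polynomial in $n$.

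For the base step $i=l$, I would use the Haar identity $f_{0,lk}=2^{l/2}[P_0(I_{\epsilon 1})-P_0(I_{\epsilon 0})]$. On $\cB_M$ this yields $|\tilde n_0 - \tilde n_1|\ge \tau\rn 2^{-l/2}\log n - O(M_{n,l})$ together with $\tilde n \lesssim n 2^{-l}$, hence $(\tilde n_0-\tilde n_1)^2/\tilde n \gtrsim \tau^2\log^2 n$. Running the Stirling/log expansion of Lemma \ref{bound_gamma} as a lower bound instead of an upper bound (which amounts to Pinsker's inequality $D_{\mathrm{KL}}(\hat p\|1/2)\ge 2(\hat p-1/2)^2$ for $\hat p = \tilde n_0/\tilde n$) produces
\[\nu^X_\epsilon \;\gtrsim\; n^{-1/2}\exp\!\bigl(c\tau^2\log^2 n\bigr).\]
Plugging this into \eqref{def:rox}, together with $(1-p^X_{\epsilon 0})(1-p^X_{\epsilon 1}) \le 1$ and $p_\epsilon \ge \Gamma^{-L_n}$, one obtains $1-p^X_\epsilon \le n^{-c'\tau^2\log n}$, super-polynomially small for $\tau$ large enough.

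For the inductive step $i<l$, the signal at $\epsilon^{[i]}$ may be small, but on $\cB_M$ the two children of $\epsilon^{[i]}$ both carry $\Omega(\mu n 2^{-i})$ observations (since $f_0\ge\mu$ and $i\le L_n$), so the same Stirling analysis produces the trivial lower bound $\nu^X_{\epsilon^{[i]}} \gtrsim n^{-1/2}$. Writing $\delta_{j}\coloneqq 1-p^X_{\epsilon^{[j]}}$, the induction hypothesis on the path-child gives $(1-p^X_{\epsilon^{[i]}0})(1-p^X_{\epsilon^{[i]}1}) \le \delta_{i+1}$, so \eqref{def:rox} yields
\[\frac{p^X_{\epsilon^{[i]}}}{1-p^X_{\epsilon^{[i]}}} \;\gtrsim\; \Gamma^{-L_n}\, n^{-1/2}\, \delta_{i+1}^{-1},\]
and therefore $\delta_i \le C\, n^{1/2}\Gamma^{L_n}\,\delta_{i+1}$. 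Iterating at most $L_n=O(\log n)$ times from the base $\delta_l \le n^{-c'\tau^2\log n}$ multiplies by $\exp(O(\log^2 n))$, so $\delta_0 \le \exp(-c''\tau^2\log^2 n)$ for $\tau$ large. The double sum in the union bound is then controlled by $L_n\cdot |S(f_0,\tau)|\cdot\exp(-c''\tau^2\log^2 n)=o(1)$.

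The main obstacle is precisely this upward propagation: one must ensure that the super-polynomial smallness gained at $(l,k)$ is not eroded by the $O(\log n)$ polynomial-in-$n$ amplifications accumulated level by level along the path. This is handled by taking $\tau$ sufficiently large depending on $\Gamma$, $\mu$ and $K$, which lets $c''\tau^2\log^2 n$ dominate the $O(\log^2 n)$ overhead; the uniform lower bound $\nu^X_{\epsilon^{[i]}}\gtrsim n^{-1/2}$ along the path, provided by the lower bound $f_0\ge\mu$ and the event $\cB_M$, is the quantitative input that makes the induction close.
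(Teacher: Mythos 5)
Your proof is correct and reaches the same quantitative conclusion as the paper, but via a structurally different route that is worth contrasting. The paper argues through a many-to-one ``tree extension'' map $\mathcal{T}\mapsto\mathcal{T}^+$: for each tree missing the target node $(l_S,k_S)$, the smallest completion $\mathcal{T}^+$ is compared via the ratio $\Pi[\mathcal{T}\given X]/\Pi[\mathcal{T}^+\given X]$, which factors over the added path; the factor at level $l_S$ supplies the super-polynomial decay $\exp(-c\tau^2\log^2n)$ via the very same KL/Stirling estimate you invoke, the factors at levels $l_0\le l<l_S$ contribute only $O(\sqrt{n})$ each, and a combinatorial multiplicity factor $l_S+1$ is picked up at the end. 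Your proof instead exploits the explicit $\operatorname{GW}(p^X)$ form of the posterior from Proposition \ref{prop2}: the exact product $\Pi[(l,k)\in\cT_{\mathrm{int}}\given X]=\prod_{i=0}^l p^X_{\epsilon^{[i]}}$ avoids the multiplicity bookkeeping entirely, and the bottom-up recursion \eqref{def:rox} lets you propagate the base gain $\delta_l=1-p^X_\epsilon\lesssim\exp(-c'\tau^2\log^2n)$ up the path, at the cost of a per-level amplification $\delta_i\lesssim n^{1/2}\Gamma^i\,\delta_{i+1}$ that accumulates to $\exp(O(\log^2 n))$ over at most $L_n=O(\log n)$ levels --- the exact same overhead-versus-gain ledger the paper balances. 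Two points to keep in mind: (i) your trivial lower bound $\nu^X_{\epsilon^{[i]}}\gtrsim n^{-1/2}$ (from $f_0\ge\mu$ and $\cB_M$) is the right workhorse for the induction, but when writing it out it is cleaner to record it as $\gtrsim (n2^{-i})^{-1/2}$ and only then weaken to $n^{-1/2}$, to match the two-sided Stirling bound of Lemma \ref{bound_gamma}; (ii) your argument leans specifically on the GW posterior structure (Proposition \ref{prop2}), which holds for OPTs but would not directly carry over to the conditioning-on-leaves prior of Example \ref{ex:cond}, whereas the paper's ratio/extension argument degrades more gracefully in that case since it only needs \eqref{T- to T}-type recursions along the added path. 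For the lemma as stated --- an OPT prior --- your route is valid and arguably tidier.
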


\begin{proof}
First, since $f_0\in\Sigma(\alpha,K)$ for some $\alpha,K>0$, there exists $C>0$ such that, for any $l\geq 0, 0\leq k<2^l$, $ |f_{0,lk}|\leq C2^{-l(\alpha+1/2)}$. Thus, for $\tau$ large enough, $(l,k)\in S(f_0, \tau)$ implies $l\leq L_n$.\\
Now, let's take $(l_S,k_S)$ a node in $S(f_0, \tau)$. Then, let's define \[\mathbb{T}_{n,(l_S,k_S)}\coloneqq\{\mathcal{T}\in\mathbb{T}_n\ |\ (l_S,k_S) \notin \mathcal{T}_{\text{int}} \},\] the set of trees in the support of our prior distribution on tree structures that do not have $(l_S,k_S)$ as an internal node, and $\epsilon=\epsilon\left(k_S,l_S\right)$. To any tree $\mathcal{T}\in\mathbb{T}_{n,\left(l_S,k_S\right)}$, it is possible to associate the full binary tree $\mathcal{T}^+$ which is the smallest extension of $\mathcal{T}$ with $(l_S,k_S)$ as an interior node,
\[ \mathcal{T}^+=\underset{\mathcal{T}'\in\mathbb{T}_n:\ \mathcal{T}\subset \mathcal{T}',\  (l_S,k_S)\in \mathcal{T}_{\text{int}}' }{\argmin} \left|\mathcal{T}'\right|.\]
 This new tree is realized with the completion of the route from the root to the node $(l_S,k_S)$, starting from the leaf node $(l_0,k_0)$ of this route which is included in  $\mathcal{T}$. Then, as in \eqref{T- to T} and using Lemma \ref{bound_gamma}, we now have for some constant $C>1$,

\begin{align}
\label{T to T+}
\begin{split}
\frac{\Pi[\mathcal{T} \given X]}{\Pi[\mathcal{T}^+ \given X]} &\leq C^{{l_S}^2}  \prod_{l=l_0}^{l_S}  \Bigg(2^{n_{X}\left(I_{\epsilon^{[l]}}\right)} \mathrm{B}\Big(a+ n_{X}\left(I_{\epsilon^{[l]}0}\right), a+ n_{X}\left(I_{\epsilon^{[l]}1}\right)\Big) \Bigg)^{-1}\\
&\leq C^{{l_S}^2}  \medmath{ \underbrace{   \prod_{l=l_0}^{l_S}   \frac{(2a + n_{X}\left(I_{\epsilon^{[l]}}\right) - 1/2)^{2a-1/2}}{(a + n_{X}\left(I_{\epsilon^{[l]}0}\right) - 1/2)^{a-1/2} (a + n_{X}\left(I_{\epsilon^{[l]}1}\right) - 1/2)^{a-1/2}}}_\text{$\eqqcolon Q_1$}}\\
&\qquad  \medmath{ \underbrace{   \prod_{l=l_0}^{l_S}   \frac{(2a + n_{X}\left(I_{\epsilon^{[l]}}\right) - 1/2)^{n_{X}\left(I_{\epsilon^{[l]}}\right)}}{2^{n_{X}\left(I_{\epsilon^{[l]}}\right)}(a + n_{X}\left(I_{\epsilon^{[l]}0}\right) - 1/2)^{n_{X}\left(I_{\epsilon^{[l]}0}\right)} (a + n_{X}\left(I_{\epsilon^{[l]}1}\right) - 1/2)^{n_{X}\left(I_{\epsilon^{[l]}1}\right)}}}_\text{$\eqqcolon Q_2$}}.
\end{split}
\end{align} where we recall that $\epsilon^{[l]}$ denotes the $l$ first elements of the sequence $\epsilon$.
On the event $\mathcal{B}_M$, for all $l\leq L_n+1$ and possible $k$, we have, using that $f_0\geq \mu>0$, $N_{X}\left(I_{l,k}\right) \gtrsim n2^{-l} \gtrsim n2^{-L_n} \to \infty$ as $n\to\infty$. Since it is also upper bounded (as $f_0$ is a Hölder density), we have $N_{X}\left(I_{l,k}\right)\lesssim n2^{-l}$. Therefore, since these bounds are uniform on $l\leq L_n+1$,

\[
Q_1 \leq  \prod_{l=l_0}^{l_S}  C \left(n2^{-l}\right)^{1/2}\leq C^{{l_S}}\sqrt{n}^{l_S}.
\]
Also, in $Q_2$, the factor at index $l$ is equal to, writing $\tilde{n}_0=N_{X}\left(I_{\epsilon^{[l]}0}\right), \tilde{n}_1=N_{X}\left(I_{\epsilon^{[l]}1}\right), \tilde{n}=N_{X}\left(I_{\epsilon^{[l]}}\right)$,
\[\exp\left[\tilde{n}_0\log\Bigg(\frac{2a-1/2+\tilde{n}}{2a-1+2\tilde{n}_0}\Bigg)+\tilde{n}_1\log\Bigg(\frac{2a-1/2+\tilde{n}}{2a-1+2\tilde{n}_1}\Bigg)\right].\]
If we write $KL(a;b)$ the Kullback-Leibler divergence between Bernoulli distributions of parameters $0\leq a,b\leq1$, then, for $n$ large enough, on $\mathcal{B}_M$, this is bounded by 
\[\exp\left[-C\tilde{n}KL\Big(\frac{a-1/2+\tilde{n}_0}{2a-1+\tilde{n}};1/2\Big)\right]\exp\left[\tilde{n}\log\Big(1+\frac{1}{4a-2+2\tilde{n}}\Big)\right].\]
The second factor can be bounded by a constant, uniformly on $l\leq L_n+1$. The first factor can be bounded by $1$ for $l<l_S$, while for $l=l_S$, we can use the bound $KL(a;b)\geq \norm{\text{Be}(a)-\text{Be}(b)}^2_1/2$ to write
\[ \exp\left[-C\tilde{n}KL\Big(\frac{a-1/2+\tilde{n}_0}{2a-1+\tilde{n}};1/2\Big)\right]\leq \exp\left[-C\tilde{n}^{-1}(\tilde{n}_0-\tilde{n}_1)^2\right].\]
By definition $\left|f_{0,l_Sk_S}\right|=2^{l_S/2}\left|P_0(I_{(l_S+1)(2k+1)})-P_0(I_{(l_S+1)(2k)})\right|$, so that on $\mathcal{B}_M$, $\left|\tilde{n}_0-\tilde{n}_1\right|\geq n\left|f_{0,l_Sk_S}\right|2^{-l_S/2}-2MM_{n,l_S+1}$, hence the upper bound for $\tau$ large enough:\[\exp\left[-C(\tau\log n -2M\sqrt{l_S+1+L_n})^2\right]\leq \exp\left[-C\tau^2\log^2 n\right] ,\]
where we used the definition of $S$, $M_{n,l_S+1}$, $L_n$ and $l_S\leq L_n$. 

Finally, for $\tau$ large enough and using that $l_S\leq L_n \leq \log n$, we can conclude that there exists constants $C_1,C_2>0$ such that

\begin{equation}
\frac{\Pi[\mathcal{T} \given X]}{\Pi[\mathcal{T}^+ \given X]} \leq C_1^{l_S^2}n^{-(C_2\tau^2-1/2)\log n} \leq n^{-(C_2\tau^2-1/2-\log C_1)\log n}.
\end{equation}
Since any tree verifying  $(l_S,k_S) \in \mathcal{T}$ is the image of at most $l_S+1$ trees by the map 
\begin{align*}
\mathbb{T}_{n,(l_S,k_S)}&\rightarrow \left\{\mathcal{T}'\in \mathbb{T}_n:\ (l_S,k_S) \in \mathcal{T}_{\text{int}}'\right\}&\\
\mathcal{T}&\mapsto \mathcal{T}^+&,
\end{align*}
as it is the length of the path from the root to the node $(l_S,k_S)$ in a tree $\mathcal{T}\in\mathbb{T}_n$,
\begin{align*}
\begin{split}
\Pi[ (l_S,k_S) \notin \mathcal{T} \given X]  &=  \sum_{ \mathcal{T}: (l_S,k_S) \notin \mathcal{T}} \frac{\Pi[\mathcal{T} \given X] }{\Pi[\mathcal{T}^+ \given X] } \Pi[\mathcal{T}^+ \given X] \\
& \leq n^{-(C_2\tau^2-1/2-\log C_1)\log n}  (l_S+1)  \sum_{ \mathcal{T}: (l_S,k_S) \in \mathcal{T}}   \Pi[\mathcal{T} \given X] \\
&\leq n^{-(C_2\tau^2-1/2-\log C_1)\log n}  \log n,
\end{split}
\end{align*}
which allows us in conjunction with the definition of $L_n$ to conclude that

\begin{align*}
\Pi[ \{\mathcal{T}: S(f_0, \tau) \not\subset \mathcal{T}\} \given X] &\leq \sum_{(l,k)\in S(f_0, \tau)}  \Pi[ (l,k) \notin \mathcal{T} \given X] \\
&\leq  2^{L_n+1 }n^{-(C_2\tau^2-1/2-\log C_1)\log n}  \log n\\
&\to 0
\end{align*}
as $n\to\infty$ for $\tau$ large enough.
\end{proof}

\begin{lemma}
\label{no distortion}

Let $T_n = \{\mathcal{T}\in\mathbb{T}_n :\ d\left(\mathcal{T}\right) \leq L_n,\  S(f_0, \tau) \subset \mathcal{T}   \}$ for $L_n$ as in \eqref{cutoff}, $c_0>0$ small enough, and $\tau>0$ as in Lemma \ref{huge signals taken}. Then, under the conditions of Lemma \ref{not too deep} and on the event $\mathcal{B}_M$ for $M>0$ large enough, there exists a constant $C>0$ such that for $n$ sufficiently large, uniformly on $\mathcal{T}\in T_n$,

\[\int \underset{(l,k)\in \mathcal{T}_\text{int}}{\max} \left| f_{lk} - f_{0,lk}\right|d\Pi[ f \given \mathcal{T}, X] \leq C\sqrt{\frac{\log n}{n}}.\]

\end{lemma}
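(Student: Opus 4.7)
The plan is to exploit the Polya tree structure of the posterior given $\cT$ and $X$ (Proposition \ref{prop1}) to express each Haar coefficient $f_{lk} = 2^{l/2}[F(I_{\veps 1}) - F(I_{\veps 0})]$ (with $\veps \equiv (l,k)$ and $F(I)=\int_I f$) in terms of independent Beta splits. I would then compare each $f_{lk}$ to an empirical proxy, control both the empirical--truth gap and the posterior--empirical gap, and finally take a maximum over the at most $2^{L_n+1}$ interior nodes of any $\cT \in T_n$.

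First, introduce the empirical Haar coefficient $\tilde f_{lk} := 2^{l/2}[N_X(I_{\veps 1}) - N_X(I_{\veps 0})]/n$ and split
\[ |f_{lk} - f_{0,lk}| \;\leq\; |f_{lk} - \tilde f_{lk}| + |\tilde f_{lk} - f_{0,lk}|. \]
The second term is deterministic given $X$. On the event $\mathcal{B}_M$, Lemma \ref{obs_event} yields $|N_X(I)/n - P_0(I)| \lesssim M M_{n,l+1}/n$ uniformly over intervals $I$ at level $l+1$; combined with $P_0(I_\veps) \asymp 2^{-|\veps|}$ (since $f_0$ is bounded), this gives $|\tilde f_{lk} - f_{0,lk}| \lesssim \sqrt{\log n/n}$ uniformly over $(l,k)$ with $l\leq L_n$.

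For the random term $|f_{lk}-\tilde f_{lk}|$, I would use that under $\Pi[\cdot\given \cT, X]$, for each $\veps\in\cT_{\mathrm{int}}$ the mass--split variable $V_\veps$ is $\mathrm{Beta}(a+N_X(I_{\veps 0}), a+N_X(I_{\veps 1}))$, independently across $\veps$. Writing the random mass of $I_\veta$ as the product $M_\veta = \prod_{i=1}^{|\veta|} V^{(\veta_i)}_{\veta^{[i-1]}}$ (with $V^{(0)}=V$, $V^{(1)}=1-V$), one has $f_{lk}-\tilde f_{lk} = 2^{l/2}[(M_{\veps 1}-m_{\veps 1}) - (M_{\veps 0}-m_{\veps 0})]$ where $m_\veta := N_X(I_\veta)/n$. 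Standard Bernstein bounds for Beta variables show that each $V_\veps$ is sub-Gaussian around its posterior mean $\bar V_\veps = (a+N_X(I_{\veps 0}))/(2a+N_X(I_\veps))$ with variance proxy $\lesssim 1/N_X(I_\veps) \lesssim 2^{|\veps|}/n$ on $\cB_M$ (using $f_0 \geq \mu > 0$). Expanding $\log M_\veta - \log m_\veta$ as a telescoping sum of $\log V^{(\veta_i)}_{\veta^{[i-1]}} - \log \bar V^{(\veta_i)}_{\veta^{[i-1]}}$ and a $O(1/N_X)$ correction from $\bar V \neq$ empirical ratio, one sees the sum has variance proxy bounded, after the $2^l$ scalings cancel, by a constant multiple of $2^{-l}/n$; hence $2^{l/2}|M_{\veps b}-m_{\veps b}|$ is sub-Gaussian with variance proxy $\lesssim 1/n$ for $b\in\{0,1\}$.

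Finally, since $|\cT_{\mathrm{int}}| \leq 2^{L_n+1}$ for $\cT\in T_n$, a standard maximal inequality for sub-Gaussian variables gives
\[ \int \max_{(l,k)\in \cT_{\mathrm{int}}} |f_{lk}-\tilde f_{lk}|\, d\Pi[f\given \cT, X] \;\lesssim\; \sqrt{\frac{L_n}{n}} \;\lesssim\; \sqrt{\frac{\log n}{n}}, \]
which combined with the uniform bound from the first step yields the lemma. The main obstacle will be the telescoping step: one must transfer the sub-Gaussian fluctuations of each Beta layer through a multiplicative product without picking up spurious factors of $2^l$, and verify that the linearisation $\log(1+x)\approx x$ is valid on a posterior set of probability $1-o(1/2^{L_n})$. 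An alternative but essentially equivalent route is a direct second-moment calculation: by independence of the $V_\veps$'s across $\veps$, $\mathrm{Var}(M_\veps(1-2V_\veps)\given X) = E[M_\veps^2]E[(1-2V_\veps)^2] - (EM_\veps)^2(E(1-2V_\veps))^2$ factorises and can be evaluated via standard Beta moments, yielding the same $1/n$ order for $\mathrm{Var}(f_{lk}\given X)$ and thereby the same conclusion after an $L^2$--maximal inequality.
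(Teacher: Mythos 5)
Your main decomposition $|f_{lk}-f_{0,lk}|\le |f_{lk}-\tilde f_{lk}|+|\tilde f_{lk}-f_{0,lk}|$ is a sound alternative to the paper's, which instead compares to the posterior--mean coefficient $\bar f_{\cT,lk}=2^{l/2}\bar p_\veps(1-2\bar Y_{\veps 0})$ with $\bar Y_\veps = (a+N_X(I_{\veps 0}))/(2a+N_X(I_\veps))$. The two centerings differ only by a deterministic $O(a 2^l/n)$ shift, which is negligible at depths $l\le L_n$, so this is a stylistic rather than substantive change. The first (deterministic) term is handled identically using $\cB_M$ and the H\"older bound on $f_0$. Both proofs must then control the random term and both must, at bottom, confine the analysis to a high--posterior--probability event where the Beta splits at every level along the path from the root are close to their conditional means; the paper makes this explicit with the event $\cA$ (Lemma \ref{beta control}) on which it obtains a \emph{deterministic} bound $\max_{(l,k)}|f_{lk}-\bar f_{\cT,lk}|\lesssim\sqrt{\log n/n}$, then handles $\cA^c$ with the trivial bound $|f_{lk}|\le 2^{l/2}\le 2^{L_n/2}$ and the polynomially small posterior mass $\Pi[\cA^c\mid\cT,X]\lesssim 2^{L_n}n^{-dM'^2}$.

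There are two issues you should fix. First, your proposed failure probability ``$1-o(1/2^{L_n})$'' for the linearisation event is too weak: after bounding the bad set by the crude deterministic bound $2^{L_n/2}$, one needs $\Pi[\text{bad}\mid\cT,X]\cdot 2^{L_n/2}=o(\sqrt{\log n/n})$, which requires the failure probability to be polynomially small with a sufficiently large exponent (as the paper arranges by taking $M'$ large in Lemma \ref{beta control}); $o(1/2^{L_n})$ alone is not enough since $2^{L_n/2}\cdot 2^{-L_n}=2^{-L_n/2}\gg \sqrt{\log n/n}$. Second, and more importantly, the ``alternative but essentially equivalent route'' via an $L^2$--maximal inequality is \emph{not} equivalent and does not give the claimed rate. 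If $\mathrm{Var}(f_{lk}\mid\cT,X)\lesssim 1/n$ and there are up to $2^{L_n+1}$ interior nodes, the $L^2$--maximal inequality only yields $\int\max_{(l,k)}|f_{lk}-\bar f_{\cT,lk}|\,d\Pi\lesssim\sqrt{2^{L_n}/n}\asymp n^{-\alpha/(1+2\alpha)}(\log n)^{-1/(2(1+2\alpha))}$, which is the nonparametric rate, not $\sqrt{\log n/n}$. To replace the exponential factor $2^{L_n}$ by the logarithmic factor $L_n$ one needs exponential (sub-Gaussian or sub-exponential) tail control, exactly what your main route and the paper's event $\cA$ provide; a second-moment bound alone does not.
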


\begin{proof}
Given a tree $\mathcal{T}$, let us define the map $\Bar{f}_{\mathcal{T}}$ such that, for each terminal node $(l,k)$ in $\mathcal{T}_{\text{ext}}$ and $x\in I_{lk}$,\[\Bar{f}_{\mathcal{T}}(x)=2^l\prod_{i=1}^{l}\Bar{Y}_{\epsilon^{[i]}}, \quad \epsilon=\epsilon\left(k,l\right),\]where \[\Bar{Y}_\epsilon= E\left[Y_\epsilon\given X, \mathcal{T}\right]=\frac{a+N_{X}(I_{\epsilon0})}{2a+N_X(I_\epsilon)}.\]This defines the mean posterior density given the tree structure $\mathcal{T}$. Similarly, for each $(l,k)\in\mathcal{T}$, with $\epsilon=\epsilon\left(k,l\right)$,  the mean probability measure of $I_{\epsilon}$ is \[\Bar{P}(I_{\epsilon})=\prod_{i=1}^{|\epsilon|}\Bar{Y}_{\epsilon^{[i]}}\eqqcolon \Bar{p}_\epsilon.\]

Then, expressing the coefficients of the decomposition in the Haar wavelet basis of this mean posterior density, we obtain that for each $(l,k)\in \mathcal{T}_\text{int}$, $\epsilon=\epsilon\left(k,l\right)$,
\[ \Bar{f}_{\mathcal{T},lk} \coloneqq \langle \Bar{f}_{\mathcal{T}}, \psi_{lk}\rangle = 2^{l/2} \left(\Bar{p}_{\epsilon}-2\Bar{p}_{\epsilon0}\right) = 2^{l/2}\Bar{p}_{\epsilon}\left(1-2\Bar{Y}_{\epsilon0}\right),\] while $\Bar{f}_{\mathcal{T},lk} =0$ for $(l,k)\not\in \mathcal{T}_\text{int}$.
When it comes to the true sampling density $f_0$, we obtain the similar expression, denoting $p_{0,\epsilon}\coloneqq P_0(I_\epsilon)$ and $y_{\epsilon0}\coloneqq\frac{P_0\left(I_{\epsilon0}\right)}{P_0\left(I_{\epsilon}\right)}$,
\[ f_{0, lk} = 2^{l/2}p_{0,\epsilon}(1-2y_{\epsilon0}),\]
and, for densities $f$ sampled from the posterior distribution given $\mathcal{T}$, with $p_\epsilon\coloneqq \prod_{i=1}^{|\epsilon|}Y_{\epsilon^{[i]}}$,
\[ f_{lk} = 2^{l/2}\Tilde{p}_\epsilon\left(1-2Y_{\epsilon0}\right)\mathds{1}_{(l,k)\in\mathcal{T}_{\text{int}}}.\]
From now on, for simplicity of notations, $\epsilon=\epsilon(k,l)$ as the context will make it clear what the pair $(l,k)$ is. For any $\mathcal{T}\in \mathbb{T}_n$, one can bound $\left|f_{lk} - f_{0,lk}\right|\leq \left|f_{lk} - \Bar{f}_{\mathcal{T},lk}\right| + \left|\Bar{f}_{\mathcal{T},lk} - f_{0,lk}\right|$.
Using the above expressions, the second term is rewritten as
\[ \left|\Bar{f}_{\mathcal{T},lk} - f_{0,lk}\right| = \Bigg|  f_{0,lk}\left[ \frac{\Bar{p}_\epsilon}{p_{0,\epsilon}}  -1\right]  + 2^{l/2+1}(y_{\epsilon0}-\Bar{Y}_{\epsilon0})\Bigg|.  \]
Then, as we are on the event $\mathcal{B}_M$, we bound the two terms above by means of Lemmas 1 and 2 from \cite{c17} (which are valid for some $c_0$ small enough) and the bound $p_{0,\epsilon}\lesssim 2^{-|\epsilon|}$ (as $f_0$ is upper bounded), which give uniformly on $\mathcal{T}\in \mathbb{T}_n$ and $(l,k)\in \mathcal{T}_{\text{int}}$,

\begin{align}
\label{bound bias}
\begin{split}
\left|\Bar{f}_{\mathcal{T},lk} - f_{0,lk}\right| &\lesssim |f_{0,lk}| \left[    a\frac{2^l}{n}+ \sqrt{\frac{L_n2^l}{n}} \right]  + \left[ |f_{0,lk}| \frac{a2^l}{n} + \sqrt{\frac{L_n}{n}} \right]\\
&\lesssim  |f_{0,lk}| \left[    a\frac{2^l}{n}+ \sqrt{\frac{L_n2^l}{n}} \right]  + \sqrt{\frac{\log n}{n}} \qquad \text{as $L_n\lesssim \log n$}.
\end{split}
\end{align}
Since $f_0$ is $\alpha$-Hölder, $|f_{0,lk}|\lesssim 2^{-l(1/2+\alpha)}$, and the last quantity in the above inequality is smaller (up to a constant) than $\sqrt{n^{-1}\log n}$ as $l\leq L_n$. It then remains to bound the term

\[\int \underset{(l,k)\in \mathcal{T_\text{int}}}{\max} \left|f_{lk} - \Bar{f}_{\mathcal{T},lk} \right|d\Pi[ f \given \mathcal{T}, X]. \]
To do so, let's first define the event
\[\mathcal{A}= \underset{\epsilon: |\epsilon| < L_n}{\cap} \left\{  |\Bar{Y}_{\epsilon0} - Y_{\epsilon0} |\leq M'\sqrt{\frac{L_n}{nP_0(I_{\epsilon0})}}  \right\}\]for $M'>0$. By Lemma \ref{beta control}, it follows that, for $d$ a small constant, 
\begin{align}
\label{quick decrease event}
\begin{split}
\Pi\left[\mathcal{A}^c \given \mathcal{T}, X\right]&\lesssim \sum_{l\leq L_n} 2^l \exp(-C{M'}^2\log n)
\lesssim  2^{L_n} \exp(-C{M'}^2\log n),
\end{split}
\end{align}
which is smaller than $\left(n/\log n\right)^{1/(1+2\alpha)}n^{-C{M'}^2}$. Then,

\[\left|f_{lk} - \Bar{f}_{\mathcal{T},lk} \right| = \Bigg| 2^{l/2+1}\Bar{p}_{\epsilon}\left(\Bar{Y}_{\epsilon0}-Y_{\epsilon0}\right) +\left[\frac{p_{\epsilon}}{\Bar{p}_{\epsilon}}-1\right]\left(\Bar{f}_{\mathcal{T},lk} + 2^{l/2+1}\Bar{p}_{\epsilon}(\Bar{Y}_{\epsilon0}-Y_{\epsilon0})\right) \Bigg|. \]
Applying Lemmas 2 and 3 from \cite{c17} (valid once again for some $c_0$ small enough), on the events $\mathcal{B}_M$ and $\mathcal{A}$, uniformly on $\epsilon$ such that $|\epsilon|=l$ for some $l\leq L_n$,
\[  \left|\frac{p_{\epsilon}}{\Bar{p}_{\epsilon}}-1\right| \lesssim \sum_{i=1}^{l} \sqrt{\frac{L_n}{nP_0(I_{\epsilon^{[i]}})}}  \lesssim \sqrt{  \frac{L_n2^l}{n}  } . \]
Therefore, we directly have that on the events $\mathcal{B}_M$ and $\mathcal{A}$, 

\begin{align}
\begin{split}
\left|f_{lk} - \Bar{f}_{\mathcal{T},lk} \right| &\lesssim \left| \Bar{f}_{\mathcal{T},lk} \right| \sqrt{  \frac{L_n2^l}{n}  } + 2^{l/2} \Bar{p}_{\epsilon} \left[ \sqrt{  \frac{L_n}{nP_0(I_{\epsilon0})} }+ \frac{L_n}{n}\sqrt{ \frac{2^l}{P_0(I_{\epsilon0})}    }  \right] \\
& \lesssim  \left| \Bar{f}_{\mathcal{T},lk} \right| \sqrt{  \frac{L_n2^l}{n}  } + \sqrt{\frac{L_n}{n}},
\end{split}
\end{align}
where we used that on $\mathcal{B}_M$, $\Bar{p}_{\epsilon}\lesssim 2^{-|\epsilon|}$ for $n$ large enough as $f_0$ is upper bounded, and $P_0(I_{\epsilon0})\gtrsim 2^{-|\epsilon|}$. Finally, with  $\left| \Bar{f}_{\mathcal{T},lk} \right|\leq \left|\Bar{f}_{\mathcal{T},lk} - f_{0,lk}\right| + \left|f_{0,lk}\right|$ and using the same computation as for \eqref{bound bias}, we have $\left|f_{lk} - \Bar{f}_{\mathcal{T},lk} \right| \lesssim \sqrt{ \frac{\log n}{n}  }$. This gives

\begin{align}
\begin{split}
\int& \underset{(l,k)\in \mathcal{T_\text{int}}}{\max}  \left|f_{lk} - f_{0,lk}\right|d\Pi\left[ f \given \mathcal{T}, X\right]  \lesssim \sqrt{\frac{\log n}{n}} + \int_{\mathcal{A}^c} \underset{(l,k)\in \mathcal{T_\text{int}}}{\max} \left|f_{ lk} - \Bar{f}_{\mathcal{T},lk} \right|d\Pi[ f \given \mathcal{T}, X]  \\
&\lesssim \sqrt{\frac{\log n}{n}} + 2^{L_n/2}\Pi[\mathcal{A}^c \given \mathcal{T}, X]
\lesssim \sqrt{\frac{\log n}{n}} + \left(\frac{n}{\log n}\right)^{\frac{\alpha/2}{2\alpha+1}} \left(\frac{n}{\log n}\right)^{\frac{1}{1+2\alpha}}n^{-dM'^2}\\
&\lesssim \sqrt{\frac{\log n}{n}} \qquad \text{for $M'$ large enough,}
\end{split}
\end{align}
where the second inequality comes from the fact that, for a density $f$, $\left| \langle f, \psi_{lk}\rangle\right| \leq 2^{l/2}$. This concludes the proof as this bound holds uniformly on $\mathcal{T}\in T_n$.
\end{proof}

\subsection{Proofs for confidence bands}

\begin{proof}[Proof of Proposition \ref{confidence_set_level}]
On the event $\mathcal{E}$ from Lemma \ref{median_tree_prop}, the bound on the median tree depth implies that for any $h,g\in C_n$,
\begin{align*}
\norm{h-g}_\infty&\leq \norm{h-f_{\mathcal{T}^*}}_\infty + \norm{g-f_{\mathcal{T}^*}}_\infty\\
&\leq 2  \sigma_n\\
&\leq 2A^{1/2} v_n\sqrt{\frac{\log n}{n}} 2^{L_n/2} \lesssim v_n \left(\frac{\log n}{n}\right)^{\frac{\alpha}{2\alpha+1}}.
\end{align*}
Also, Lemma \ref{lemma: supnorm convergence median tree} ensures that 
\[ \| \hat f_{\cT^*}-f_0 \|_\infty =O_{P_0}\left(\left(\frac{\log^2{n}}{n}\right)^{\frac{\al}{2\al+1}}\right).\]
Then, according to the proof of Proposition 3 in \cite{hoffmann2011}, for any $f_0\in\cS(\alpha, K, \eta)$ and $l_1$ large enough
\[ \underset{(l,k):\ l\geq l_1}{\sup} |\langle f_{0}, \psi_{lk}\rangle| \geq C2^{-l_1(\alpha+1/2)}.\]
For $\Delta_n>0$ and $\zeta>0$ such that
\[ \zeta\left(\frac{n}{\log^2 n}\right)^{1/(2\alpha+1)}\leq 2^{\Delta_n}\leq 2\zeta\left(\frac{n}{\log^2 n}\right)^{1/(2\alpha+1)},\]
this implies that 
\[\underset{(l,k):\ l\geq \Delta_n}{\sup}|\langle f_{0}, \psi_{lk}\rangle| \geq C \zeta^{-\alpha-1/2}\frac{\log n}{\sqrt{n}}.\]
Therefore, if $\zeta$ is small enough, there exists $l\geq\Delta_n$ and $0\leq k<2^l$ such that $|\langle f_{0}, \psi_{lk}\rangle|>A\log n/\sqrt{n}$, and then $(l,k)\in\mathcal{T}^*$ on $\mathcal{E}$ according to Lemma \ref{median_tree_prop}. As a consequence,
\begin{equation}\label{eqn: lower bound sigma_n} \sigma_n\geq v_n\sqrt{\frac{\log n}{n}} 2^{\Delta_n/2}\geq C' \frac{v_n}{\log^{1/2}n}\left(\frac{\log^2n}{n}\right)^{\alpha/(2\alpha+1)},\end{equation}
and since $\log^{1/2}n=o(v_n)$,  $\norm{f_0-f_{\mathcal{T}^*}}_\infty\leq \sigma_n/2$ for $n$ large enough. This allows us to conclude that 
\[P_0\left[f_0\in \cC_n\right]=P_0\left[\{f_0\in \cC_n\}\cap \mathcal{E}\right] +o(1)= 1+o(1).\]

It remains to determine the credibility level of the set $\cC_n$. From Theorem \ref{contraction_rate} and Lemma \ref{lemma: supnorm convergence median tree}, the posterior contracts towards $f_0$ and the $\hat f_{\cT^*}$ converges to $f_0$ on an asymptotically certain event $\mathcal{E}$, both at a faster rate than $\sigma_n$ (see \eqref{eqn: lower bound sigma_n}). Therefore, an application of the triangular inequality gives
\[\Pi\left[ \cC_n \given X \right] \geq \Pi\left[ \norm{f-f_0}_\infty \leq \sigma_n/2 \given X\right] \mathds{1}_{\mathcal{E}} + \Pi\left[ \cC_n \given X\right] \mathds{1}_{\mathcal{E}^c} = 1+o_{P_0}(1).\]
\end{proof}

\begin{proof}[Proof of Proposition \ref{prop:cmulti}]
The credibility statement follows from the fact that $\cC_n$ (respectively the multiscale ball) has credibility $1$ (respectively $1-\gamma$) asymptotically.  The diameter statement follows from the inclusion $\cC_n^{\cM}\subset\cC_n$. For coverage, one combines Theorem \ref{confidence_set_level} which gives that $\cC_n$ has asymptotic coverage $1$, with Theorem 5 in \cite{cn14} which from the nonparametric BvM (Theorem \ref{BVM}) enables to deduce frequentist coverage of $\|\cdot\|_{\cM_0(w)}$--balls (hence the multiscale ball in the intersection defining $\cC_n^{\cM}$ has asymptotic coverage $1-\ga$).
\end{proof}


\section*{Acknowledgments}
The authors would like to thank Li Ma for insightful comments.
\bibliographystyle{siam}
\bibliography{biblio}

\pagebreak

\appendix
\begin{center}\textcolor{header1}{\textbf{\huge Supplementary material}}\end{center}
\vspace{0.3in}

\section{The classical P\'olya tree and $T$--P\'olya trees}\label{section: polya tree variants}

Let us partition the sample space $I_\varnothing=[0,1)$  as $I_{1,0}\cup I_{1,1}$, these two subsets being the level-$1$ elementary regions. These can in turn be partitioned as $I_{1,0}=I_{2,0}\cup I_{2,1}$ and $I_{1,1}=I_{2,2}\cup I_{2,3}$, involving level-$2$ elementary regions. Continuing this partitioning scheme gives the general level-$k$ elementary region, $k\geq1$, whose set will be written as $\mathcal{A}^k$. More precisely, we partition $I_{l,k}=I_{l+1,2k}\cup I_{l+1, 2k+1}$, $l\geq0, 0\leq k\leq 2^l-1$. From this recursive partitioning scheme, one defines a random recursive partition of $I_\varnothing$ and an associated random density.
 
 The Pólya Tree prior corresponding to the partitioning $\cup_{l=1}^\infty\mathcal{A}^l$ is the distribution on probability measure on $[0;1)$, whose samples are defined by the conditional probabilities
\begin{equation}\label{eq: Polya_mass_splitting}\epsilon\in\mathcal{E}^*,\ P\left(I_{\epsilon0}|I_\epsilon\right)= V_{\epsilon0}\sim \text{Beta}(\nu_{\epsilon0},\nu_{\epsilon0}) .\end{equation}
For an appropriate choice of Beta parameters $\nu_\epsilon,\ \epsilon\in\mathcal{E}^*$, samples from this prior actually extends almost surely to an absolutely continuous measure, so that it can be seen as a prior on densities. The Beta random variables $V_{\epsilon0}$ then corresponds to the share of the mass on $I_\epsilon$ that is allocated to $I_{\epsilon0}$. This mass allocation scheme is illustrated on Figure \ref{fig: Polya_tree}: the random mass of each interval $I_\epsilon$ is the product of Beta variables on the edges of the path from the root to the corresponding node. As a consequence, the random mass on $I_{\epsilon}$, $\epsilon\in\mathcal{E}^*$, is equal to $\prod_{i=1}^{|\epsilon|}V_{\epsilon^{[i]}}$.

\begin{figure}[!h]
\footnotesize
\centering
\begin{tikzpicture}[
very thick,
level 1/.style={sibling distance=8cm},
level 2/.style={sibling distance=3.5cm},
level 3/.style={sibling distance=1cm},
every node/.style={circle,solid, draw=black,thin, minimum size = 0.5cm},
emph/.style={edge from parent/.style={dashed,black,thin,draw}},
norm/.style={edge from parent/.style={solid,black,thin,draw}}
]
	\node [rectangle] (r){$I_{\varnothing}=[0;1)$}
		child {
			node [rectangle] (a) {$I_0=[0;1/2)$}
			child {
				node [rectangle] {$I_{00}=[0;1/4)$}
				child[emph] {
					node [rectangle, dotted] {}
				}
				child[emph] {
					node [rectangle, dotted] {}
				}
				edge from parent node[left, draw=none]{$Y_{00}\sim \text{Beta}\left(\nu_{00}, \nu_{01}\right)$}
			}
			child {
				 node [rectangle] {$I_{01}=[1/4;1/2)$}
				child[emph] {
					node [rectangle, dotted] {}
				}
				child[emph] {
					node [rectangle, dotted] {}
				}
				edge from parent node[right, draw=none]{$Y_{01}=1-Y_{01}$}
			}
			edge from parent node[left, draw=none]{$Y_0\sim \text{Beta}\left(\nu_{0}, \nu_{1}\right)\quad$}
		}
		child {
			node [rectangle] {$I_{1}=[1/2;1)$}
			child {
				node [rectangle] {$I_{10}=[1/2;3/4)$}
				child[emph] {
					node [rectangle, dotted] {}
				 }
				 child[emph] {
					node [rectangle, dotted] {}
				 }
				edge from parent node[left, draw=none]{$Y_{10}\sim \text{Beta}\left(\nu_{10}, \nu_{11}\right)$}
			}
			child {
				node [rectangle] {$I_{11}=[3/4;1)$}
				child[emph] {
					node [rectangle, dotted] {}
				}
				child[emph] {
					node [rectangle, dotted] {}
				}
				edge from parent node[right, draw=none]{$Y_{11}=1-Y_{10}$}
				}
			edge from parent node[right, draw=none]{$\quad Y_1=1-Y_0$}
		 };
\end{tikzpicture}
\caption{Pólya Tree process on the dyadic recursive partitioning, with splits at midpoints.}
\label{fig: Polya_tree}
\end{figure}
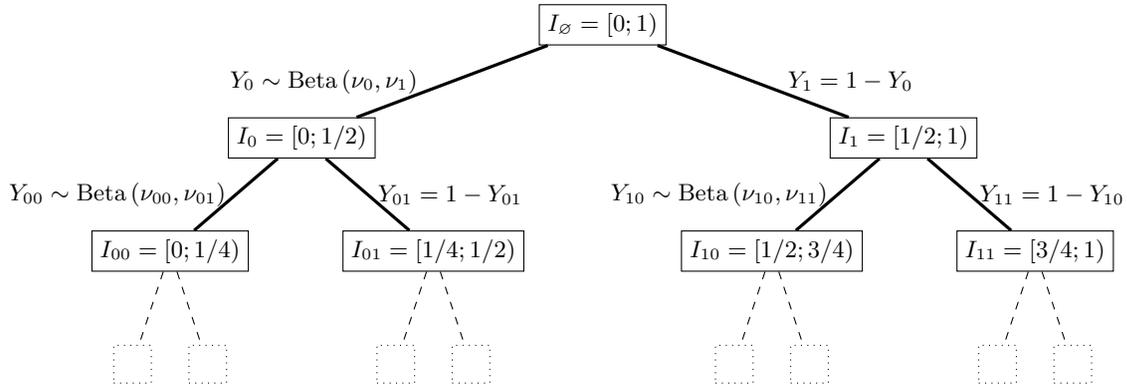

A simpler related prior on densities, the truncated Pólya Tree prior, stops the splitting of the mass at some level $L<\infty$ and has sampled densities which are constant on each set $I_{\epsilon}$ in $\mathcal{A}^L$, with value $\mu\left(I_{\epsilon}\right)^{-1}\prod_{i=1}^{|\epsilon|}V_{\epsilon^{[i]}}$. If one introduces the tree $T$ as \[T=\left\{(k,l),\ l\leq L, 0\leq k<2^l\right\},\]
that is the complete binary tree of depth $d(T)=L$, it corresponds to a T-Pólya tree distribution with $\Pi_\bT=\delta_{T}$.

\section{Tree posteriors: the Galton--Watson/P\'olya tree case} 
\label{sec: link_OPT_GW}
As shown in Subsection \ref{subsec: Partitioning}, the Markov process on trees $GW(p)$ can be seen as a distribution on partitions. We first show that it corresponds to the distribution introduced in  \cite{wm10}.

In the Optional Pólya Tree (OPT) construction, different recursive partitioning mechanism are allowed: each level-$k$ elementary region $A\in \mathcal{A}^k$ can be split in $M(A)$ different ways, the $j$-th being written as
\begin{equation}\label{eq: subdivision_elementary} A=\cup_{i=1}^{K_j(A)} A_{k}^j,\end{equation}where the $A_{k}^j$ are level-$(k+1)$ elementary regions (see Appendix \ref{section: polya tree variants}). Then, a random partition of the sample space $[0;1)$ is produced recursively. For $0\leq \rho([0,1))\leq 1$, the partition is the sample space itself with probability $\rho([0,1))$. Otherwise, one of the $M([0,1))$ partitions are drawn according to probability vector $\lambda([0;1))=\left(\lambda_1,\dots,\lambda_{M([0,1))}\right)$. The partitioning then continues: each elementary region $A$ stays intact with probability $\rho(A)$, otherwise it is split (a decision encoded by the variable $S(A)\sim \text{B}(\rho(A))$) and its partition is chosen according to probability vector $\lambda(A)$. Following the discussion in Subsection \ref{subsec: Partitioning}, the $GW(p)$ is a particular case, where $M(A)=1$, $\lambda(A)=1$ and $K_1(A)=2$, as the intervals are only ever split at their midpoints,
\begin{equation}\label{eq: subdivision_gwp} I_{l,k}=I_{l+1,2k}\cup I_{l+1,2k+1}.\end{equation}
The level-$k$ elementary regions are the $I_\epsilon$ with $|\epsilon|=k$.
 Also, it corresponds to the choice of \[\rho(I_{l,k})=1-p_{lk},\ l<L_{\text{max}},\qquad  \rho(I_{L_{\text{max}},k})=0.\]
 Given a partition $\mathcal{I}$, in OPT, a probability measure $Q$ is defined by the conditional probabilities, for $A$ an elementary region split as in \eqref{eq: subdivision_elementary},
 \[ \left(Q(A_1^j|A),\dots, Q(A_{K_j(A)}^j|A)\right)=Q(A)\theta(A),\qquad \theta(A)\sim \text{Dir}\left(\alpha_1^j(A),\dots,\alpha_{K_j(A)}^j(A)\right),\]
 with Dirichlet random variables $\theta$ mutually independent and independent from the variables $S(A')$ for $A \not\subset A'$, and $Q([0,1))=1$. For $M(A)=1$ and $K_1(A)=2$, it is similar to the mass allocation mechanism in \eqref{eq: Polya_mass_splitting} when $\alpha_1^1=\alpha_2^1=a$. However, whenever the recursive partitioning stops and gives a finite partition, these equations do not completely characterize a measure on Borelians of $[0,1)$, so that the measure $Q$ is defined on Borelians $B$ as
 \[ Q(B) = \sum_{A\in\mathcal{I}} Q(A) \frac{\mu\left(A\cap B\right)}{\mu(A)}.\]
This corresponds to the absolutely continuous measure with density constant on the elements of $\mathcal{I}$. Therefore, the distribution from Proposition \ref{opt} is actually a special case of OPT.

%

\section{The OPT posterior on trees} 
\label{sec: tree_GW_posterior}
In the following, we prove Propositions \ref{prop1} and \ref{prop2}. We first obtain a general formula for the posterior on trees, which implies an explicit formulation of $\Pi[\cdot\given X, \cT]$, and then focus on the OPT prior.
The posterior distribution on trees is given for $T\in\mathbb{T}_n$ by Bayes' formula as

\[ \Pi\left[T|X\right]=\frac{\int \Pi\left[X,T|f\right]d\Pi\left[f\right]}{\int \Pi\left[X|f\right]d\Pi\left[f\right]}.\]
Since $\Pi\left[X,T|f\right]=\mathds{1}_{\mathcal{T}=T}\prod_{i=1}^n f\left(X_i\right)$, the numerator is equal to

\[ \sum_{T'\in\mathbb{T}_n} \Pi\left[\mathcal{T}=T'\right]\mathds{1}_{\mathcal{T}=T} \int\prod_{i=1}^n f\left(X_i\right)d\Pi\left[f|T'\right]=\Pi\left[\mathcal{T}=T\right] \int\prod_{i=1}^n f\left(X_i\right)d\Pi\left[f|T\right].\]
Writing $N_T(X) \coloneqq \int\prod_{i=1}^n f\left(X_i\right)d\Pi\left[f|T\right]$ the marginal likelihood, the denominator can be expressed as

\[ \sum_{T'\in\mathbb{T}_n}  \Pi\left[\mathcal{T}=T'\right] \int \Pi\left[X, \mathcal{T}=T'|f\right]d\Pi\left[f\right] = \sum_{T'\in\mathbb{T}_n}  \Pi\left[\mathcal{T}=T'\right]  N_{T'}(X).\]
Let's compute $N_T(X)$. By definition, for any $i=1,\dots,n$,
\[ f\left(X_i\right) = \prod_{(l,k)\in T_{\text{ext}}} \left(\prod_{j=1}^l 2Y_{\epsilon(k,l)^{[j]}}\right)^{\mathds{1}_{X_i\in I_{lk}}},\]
and 
\begin{align*} 
\prod_{i=1}^n f\left(X_i\right) &= \prod_{(l,k)\in T_{\text{ext}}} \left(\prod_{j=1}^l 2Y_{\epsilon(k,l)^{[j]}}\right)^{N_X\left(I_{lk}\right)}\\
&= \prod_{(l,k)\in T\setminus\{(0,0)\}}  \left(2Y_{\epsilon(k,l)}\right)^{N_X\left(I_{lk}\right)}\\
&= \prod_{(l,k)\in T_{\text{int}}}  \left(2Y_{\epsilon(k,l)0}\right)^{N_X\left(I_{\epsilon(k,l)0}\right)}\left(2(1-Y_{\epsilon(k,l)0})\right)^{N_X\left(I_{\epsilon(k,l)1}\right)}.
\end{align*} 
On the one hand, we obtain that 
\begin{align*}
\Pi[f\given X, \cT]&= N_T(X)^{-1} \Pi[f, X \given \cT]=N_T(X)^{-1} \Pi[X \given f, \cT] \Pi[f \given \cT]\\
&=  C(X,T) \prod_{i=1}^n f\left(X_i\right) \prod_{(l,k)\in T_{\text{ext}}} \prod_{j=1}^l Y_{\epsilon(k,l)^{[j]}}^a \left(1-Y_{\epsilon(k,l)^{[j]}} \right)^a \\
&= C(X,T) \prod_{(l,k)\in T_{\text{int}}}  Y_{\epsilon(k,l)0}^{a+N_X\left(I_{\epsilon(k,l)0}\right)}\left((1-Y_{\epsilon(k,l)0})\right)^{a+N_X\left(I_{\epsilon(k,l)1}\right)},\\
\end{align*}for $C(X,T)$ a constant depending on $X$ and $T$ only, which proves the claim of Proposition \ref{prop1}.
On the other hand, for any variable $Y\sim \text{Beta}(a,a)$, one obtains
\[ E\left[Y^N(1-Y)^M\right]= \int_0^1 y^N(1-y)^M \frac{y^a(1-y)^a}{B(a.a)}dy = \frac{B(a+N,a+M)}{B(a,a)}.\]
Therefore, \[ N_T(X) =  \prod_{(l,k)\in T_{\text{int}}} 2^{N_X\left(I_{\epsilon(k,l)}\right)} \frac{B\left(a+N_X\big(I_{\epsilon(k,l)0}\big),a+N_X\big(I_{\epsilon(k,l)1}\big)\right)}{B(a,a)}.\]

Let's now focus on the special case of the $\operatorname{GW}(p)$ tree prior, as in Proposition \ref{prop2}. For any possible pair $(l,k)$, take $T\in\mathbb{T}_n$ such that $(l,k)\in T_{\text{ext}}$ and let \[T^+=T\cup\left\{(l+1,2k), (l+1,2k+1)\right\}.\] Then, 
\begin{equation}\label{eq: prior_T_Tplus} \Pi[T^+]=\Pi[T]\frac{p_{lk}}{1-p_{lk}}(1-p_{l+1,2k})(1-p_{l+1,2k+1}),\end{equation}
and 
\begin{align}
\begin{split}
\label{eq:posterior_proba_split}
\frac{\Pi[T^+|X]}{\Pi[T|X]} &= \frac{\Pi\left[\mathcal{T}=T^+\right] L_{T^+}(X)}{\Pi\left[\mathcal{T}=T\right] L_{T}(X)}\\
&= \frac{p_{lk}}{1-p_{lk}}(1-p_{l+1,2k})(1-p_{l+1,2k+1})\\
&\qquad2^{N_X\left(I_{\epsilon(k,l)}\right)}  \frac{B\left(a+N_X\big(I_{\epsilon(k,l)0}\big),a+N_X\big(I_{\epsilon(k,l)1}\big)\right)}{B(a,a)}.
\end{split}
\end{align}

This last quantity is independent of $T$ and $T^+$ and depends only on $(l,k)$. Therefore, if we can find $p_{lk}^X, p_{l+1,2k}^X, p_{l+1,2k+1}^X$ such that the last quantity in \eqref{eq:posterior_proba_split} is equal to
\[ \frac{p_{lk}^X}{1-p_{lk}^X}(1-p_{l+1,2k}^X)(1-p_{l+1,2k+1}^X),\] for any appropriate $(l,k)$, we obtain a formula similar to \eqref{eq: prior_T_Tplus} and the posterior on trees is a $GW(p^X)$ process. This defines a set of equations that has a solution, as for any $0\leq k< 2^{L_{\text{max}}}$, we necessarily have $p_{L_{\text{max}}k}=0$ and the equations can be solved to obtain $p^X$, starting from $l=L_{\text{max}}$ and solving the successive equations in a ``bottom--up" way up to the level $l=0$.

\section{Median tree properties}

\begin{lemma}
\label{median_tree_prop}
Under the same prior and assumptions as in Theorem \ref{contraction_rate}, there exists an event $\mathcal{E}$, such that $P_0\left[\mathcal{E}\right]=1+o(1)$, on which the following is true: for some constants $A>0,B>0$,
\begin{itemize}
\item $2^{d(\mathcal{T}^*)}\leq A2^{L_n}\asymp \left(n/\log n\right)^{1/(2\alpha+1)}$, $L_n$ as in \eqref{cutoff},
\item For any $(l,k)$ such that $|f_{0,lk}|\geq Bn^{-1/2}\log n$, $(l,k)\in \mathcal{T}_{\text{int}}^*$.
\end{itemize}
\end{lemma}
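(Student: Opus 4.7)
The plan is to take $\mathcal{E}$ to be the observation-set event $\mathcal{B}_M$ of Lemma \ref{obs_event} for some large constant $M$, which already satisfies $P_0[\mathcal{E}] = 1-o(1)$, and then combine Lemmas \ref{not too deep} and \ref{huge signals taken} with the thresholding definition \eqref{mediantree} of the median tree. The first thing to check is that the bounds obtained in the proofs of those two lemmas are in fact deterministic sequences in $n$ tending to zero, uniformly over realisations lying in $\mathcal{B}_M$: the upper bound \eqref{bnd_depth_prob} for $\Pi[d(\mathcal{T})>L_n\given X]$ is of the form $2^{L_n}n^{\bar C}/\Gamma^{L_n}$, and the corresponding bound for $\Pi[\{\mathcal{T}:S(f_0,\tau)\not\subset\mathcal{T}_{\text{int}}\}\given X]$ at the end of the proof of Lemma \ref{huge signals taken} is of the form $2^{L_n+1} n^{-(C_2\tau^2-C')\log n}\log n$, both deterministic in $n$. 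Consequently, for $\Gamma$ and $\tau$ large enough and all $n$ sufficiently large,
$$\Pi[d(\mathcal{T})>L_n\given X]<1/4 \qquad\text{and}\qquad \Pi\bigl[\{\mathcal{T}:S(f_0,\tau)\not\subset \mathcal{T}_{\text{int}}\}\given X\bigr]<1/4$$
hold pointwise on $\mathcal{E}$.

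For the depth bound, I would use the elementary observation that any internal node $(l,k)\in\mathcal{T}_{\text{int}}$ forces the ambient tree to have depth at least $l+1$, so that for $l\ge L_n$,
$$\Pi\bigl[(l,k)\in\mathcal{T}_{\text{int}}\given X\bigr]\le \Pi[d(\mathcal{T})\ge l+1\given X]\le \Pi[d(\mathcal{T})>L_n\given X]<1/2$$
on $\mathcal{E}$. By definition \eqref{mediantree}, no such $(l,k)$ belongs to $\mathcal{T}^*_{\text{int}}$, hence all internal nodes of $\mathcal{T}^*$ sit at levels strictly less than $L_n$, their children at levels at most $L_n$, and $d(\mathcal{T}^*)\le L_n$; in particular one may take $A=1$.

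For the second item, I would set $B=\tau$ with $\tau$ as in Lemma \ref{huge signals taken}. Any node $(l,k)$ with $|f_{0,lk}|\ge Bn^{-1/2}\log n$ is by definition an element of $S(f_0,\tau)$, so that
$$\Pi\bigl[(l,k)\notin \mathcal{T}_{\text{int}}\given X\bigr]\le \Pi\bigl[S(f_0,\tau)\not\subset \mathcal{T}_{\text{int}}\given X\bigr]<1/2$$
on $\mathcal{E}$ for $n$ large enough. This immediately yields $\Pi[(l,k)\in\mathcal{T}_{\text{int}}\given X]>1/2$, i.e.\ $(l,k)\in \mathcal{T}^*_{\text{int}}$. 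There is no genuine obstacle in the argument: all the work has already been carried out in Lemmas \ref{not too deep} and \ref{huge signals taken}, and the median tree is engineered precisely so that vanishing-on-$\mathcal{B}_M$ posterior probabilities of ``bad'' structural events translate into hard structural properties of $\mathcal{T}^*$; the only mild point to verify is the uniformity (over $\omega\in\mathcal{B}_M$) of the two posterior bounds, which is evident from the displayed estimates in the respective proofs.
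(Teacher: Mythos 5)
Your proof is correct, and the structure is essentially that of the paper. The paper also takes $\mathcal{B}_M$ as the starting event and uses that the posterior bounds from Lemmas \ref{not too deep} and \ref{huge signals taken} are deterministic sequences tending to zero on $\mathcal{B}_M$ (all intermediate estimates in those proofs rely only on the bounds defining $\mathcal{B}_M$, so the hidden constants and ``for $n$ large enough'' thresholds are indeed uniform over realisations in $\mathcal{B}_M$). For the second bullet your argument coincides with the paper's: for $(l,k)\in S(f_0,\tau)$ one has $\{(l,k)\notin\mathcal{T}_{\text{int}}\}\subset\{S(f_0,\tau)\not\subset\mathcal{T}_{\text{int}}\}$, whose posterior mass is eventually below $1/2$ on $\mathcal{B}_M$, which forces $(l,k)\in\mathcal{T}^*_{\text{int}}$. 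For the first bullet you simplify: you observe that $\{(l,k)\in\mathcal{T}_{\text{int}}\}\subset\{d(\mathcal{T})>L_n\}$ for every $l\geq L_n$ \emph{simultaneously}, so the single fact $\Pi[d(\mathcal{T})>L_n\given X]<1/2$ on $\mathcal{B}_M$ rules out all such nodes at once, giving $d(\mathcal{T}^*)\leq L_n$ (so $A=1$) directly. The paper instead shows $P_0[\{\mathcal{T}^*\notin\mathbb{T}^{(1)}\}\cap\mathcal{B}_M]=o(1)$ via Markov's inequality and a union bound over all deep nodes $(l,k)$, and then takes $\mathcal{E}=\{\mathcal{T}^*\in\mathbb{T}^{(1)}\}\cap\mathcal{B}_M$. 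Your route avoids the Markov step and the union bound entirely, and lets you take $\mathcal{E}=\mathcal{B}_M$ itself; this is a genuine (if modest) streamlining, buying a cleaner proof with a sharper constant $A$ at no extra cost. The paper's more roundabout route is not exploiting any additional structure that your argument misses, so nothing is lost by your shortcut.
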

\begin{proof}
On the event $\mathcal{B}_M$ from Lemma \ref{obs_event}, Lemma  \ref{huge signals taken} shows that the set $\mathbb{T}^{(2)}$ of trees satisfying the second condition in the lemma, for $B$ large enough, is such that $\Pi\left[\mathbb{T}^{(2)}|\ X\right]\to1$. Therefore the event \[ \Tilde{\mathcal{E}} =\left\{\Pi\big[\mathbb{T}^{(2)}|\ X\big]\geq 3/4\right\}\supset \mathcal{B}_M\]
is asymptotically certain.\\
For any node $(l,k)$ such that $|f_{0,lk}|\geq Bn^{-1/2}\log n$, since it belongs to the interior nodes of any tree in $\mathbb{T}^{(2)}$  by definition,
\[ \Pi\left[(l,k)\in \mathcal{T}_{\text{int}}| \ X\right] = \sum_{\mathcal{T}\in\mathbb{T}_n:\ (l,k)\in \mathcal{T}_{\text{int}}} \Pi\left[\mathcal{T}|\ X\right]\geq \Pi\left[\mathbb{T}^{(2)}|\ X\right].\]
Then, on $\Tilde{\mathcal{E}}$, $(l,k)\in \mathcal{T}^*$ by definition and $\mathcal{T}^*$ satisfies the second condition of the lemma.\\
Let's now turn to the set $\mathbb{T}^{(1)}$ of trees satisfying the first condition in the lemma. Using the same arguments as for \eqref{bnd_depth_prob}, there exists $C>0$ such that for any $l$ such that $2^{l}\gtrsim 2^{L_n}$ and $\Gamma>0$ large enough, 
\[ \Pi\left[d\big(\mathcal{T}\big)>l\ |\ X\right] \leq n^C \left(2/\Gamma\right)^l,\]
which holds on the event $\mathcal{B}_M$. Then, since \[ \Pi\left[(l,k)\in \mathcal{T}_{\text{int}}| \ X\right] \leq  \Pi\left[d\left(\mathcal{T}\right)>l | \ X\right],\] Markov's inequality implies
\begin{align*}
P_0\left[\big\{\mathcal{T}^*\notin\mathbb{T}^{(1)}\big\}\cap \mathcal{B}_M\right]&= P_0\left[\big\{\exists (l,k):\ 2^{l}> A2^{L_n},\ (l,k)\in\mathcal{T}^*\big\}\cap \mathcal{B}_M\right]\\
&\leq \sum_{l:\ 2^{l}> A2^{L_n}}^{L_{\text{max}}} \sum_{0\leq k<2^l-1}  P_{0}\left[\big\{\Pi[(l-1,\lfloor k/2\rfloor)\in\mathcal{T}_{\text{int}}\ |\ X]>1/2\big\}\cap \mathcal{B}_M\right]\\
&\leq \sum_{l:\ 2^{l}> A2^{L_n}}^{L_{\text{max}}} 2 \sum_{0\leq k<2^l-1}   E_{0}\left[\Pi[(l-1,\lfloor k/2\rfloor)\in\mathcal{T}_{\text{int}}\ |\ X] \mathds{1}_{\mathcal{B}_M}\right]\\
&=o(1)\text{ for $\Gamma$ large enough}.
\end{align*}
One concludes by noting that $\mathcal{B}_M$ is asymptotically certain according to Lemma \ref{obs_event}, and $\mathcal{E}=\big\{\mathcal{T}^*\in\mathbb{T}^{(1)}\big\}\cap \mathcal{B}_M$ satisfies the conditions of the lemma.
\end{proof}

\begin{lemma}\label{lemma: med tree depth}
Let $0<\alpha \leq 1$, $K>0$, $\mu>0$ and $\eta>0$. Let $\Pi$ be the same prior as in Theorem \ref{confidence_set_level}, then for $f_0\in\cS(\alpha, K, \eta)\cap \mathcal{F}(\alpha,K,\mu)$, 
\[ \left(n/\log^2 n\right)^{1/(2\alpha+1)} \lesssim 2^{d(\cT^*)}\lesssim \left(n/\log n\right)^{1/(2\alpha+1)},\] on an event of probability converging to $1$.
\end{lemma}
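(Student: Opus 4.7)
The upper bound is essentially already established: Lemma \ref{median_tree_prop} directly yields $2^{d(\cT^*)} \leq A \cdot 2^{L_n} \asymp (n/\log n)^{1/(2\alpha+1)}$ on an asymptotically certain event $\cE$, under the prior and regularity assumptions of Theorem \ref{contraction_rate}, which are in force here. So only the lower bound requires a genuinely new argument, relying crucially on the self-similarity assumption $f_0 \in \cS(\alpha,K,\eta)$.

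For the lower bound, the plan is to exhibit a wavelet coefficient of $f_0$ that is simultaneously large enough to force inclusion in the median tree (via the second bullet of Lemma \ref{median_tree_prop}) and sits at a sufficiently deep level. Concretely, choose $\Delta_n$ such that $\zeta(n/\log^2 n)^{1/(2\alpha+1)}\leq 2^{\Delta_n}\leq 2\zeta(n/\log^2 n)^{1/(2\alpha+1)}$ for some small constant $\zeta>0$. By the self-similarity condition and the Haar-wavelet characterization of $\|K_j(f_0)-f_0\|_\infty$ (see the argument in the proof of Proposition \ref{confidence_set_level}, invoking Proposition 3 of \cite{hoffmann2011}), one gets for $n$ large enough
\[ \sup_{(l,k):\, l\geq \Delta_n} |f_{0,lk}| \,\geq\, C\, 2^{-\Delta_n(\alpha+1/2)} \,\geq\, C\zeta^{-\alpha-1/2}\,\frac{\log n}{\sqrt{n}}. \]
Taking $\zeta$ small enough so that $C\zeta^{-\alpha-1/2} \geq B$, where $B$ is the constant from the second bullet of Lemma \ref{median_tree_prop}, yields a node $(l^\star,k^\star)$ with $l^\star \geq \Delta_n$ and $|f_{0,l^\star k^\star}| \geq B n^{-1/2}\log n$. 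By Lemma \ref{median_tree_prop}, on the event $\cE$ this node lies in $\cT^*_{\text{int}}$, hence $d(\cT^*) \geq l^\star \geq \Delta_n$, which gives $2^{d(\cT^*)} \gtrsim (n/\log^2 n)^{1/(2\alpha+1)}$ as required.

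Combining the two bounds on the intersection of $\cE$ with $\cB_M$ (both asymptotically certain by Lemma \ref{median_tree_prop} and Lemma \ref{obs_event}) completes the argument. The only mildly delicate point is verifying that the constants can be chosen compatibly: one must pick $\zeta$ small enough that the bound produced from self-similarity beats the threshold $B\,n^{-1/2}\log n$, while the constants $A,B$ delivered by Lemma \ref{median_tree_prop} depend only on $\alpha_1,\alpha_2,K,\mu,\eta$ and the prior parameters; this is essentially the same bookkeeping already carried out in the proof of Proposition \ref{confidence_set_level} and presents no real obstacle.
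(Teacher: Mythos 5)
Your proposal is correct and follows essentially the same approach as the paper, which proves this lemma by simply pointing to Lemma \ref{median_tree_prop} for the upper bound and referring back to the self-similarity argument (the block around equation \eqref{eqn: lower bound sigma_n} in the proof of Theorem \ref{confidence_set_level}) for the lower bound -- the argument you have unpacked in detail. The only cosmetic point worth flagging is that the paper's text above \eqref{eqn: lower bound sigma_n} writes $A\log n/\sqrt{n}$ where, as you correctly note, the relevant threshold constant is the $B$ from the second bullet of Lemma \ref{median_tree_prop}.
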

\begin{proof}
Using the same argument as above \eqref{eqn: lower bound sigma_n}, we obtain the lower bound. Lemma \ref{median_tree_prop} gives the upper bound.
\end{proof}

\begin{lemma}
\label{lemma: coeff med tree}
 Let $f_0$ and $\ell_0$ be as in Theorem \ref{BVM}, $\Pi$ as in  Proposition \ref{median_tree_prop} and  $\hat f_{\cT^*}$ as defined in \eqref{medest}. The median tree estimator then satisfies 
 \[ 
\max_{l> \ell_0(n)} \max_k |\hat f_{\cT^*,lk}-f_{0,lk}| =O_{P_0}\left(\frac{\log{n}}{\sqrt{n}}\right).
\]
\end{lemma}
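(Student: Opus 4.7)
The plan is to split the claim into two cases depending on whether the node $(l,k)$ belongs to $\cT_{\text{int}}^*$ or not, treating the two regimes $l \le L_{\max}$ and $l>L_{\max}$ separately. For nodes $(l,k) \in \cT_{\text{int}}^*$, formula \eqref{medest} together with the identity $f_{0,lk}=2^{l/2}\bigl(P_0(I_{(l+1)(2k+1)})-P_0(I_{(l+1)(2k)})\bigr)$ expresses the difference as
\[
\hat f_{\cT^*, lk}-f_{0,lk} \;=\; \frac{2^{l/2}}{n}\bigl[(N_X-nP_0)(I_{(l+1)(2k+1)}) - (N_X-nP_0)(I_{(l+1)(2k)})\bigr],
\]
i.e.\ $2^{l/2}$ times a centred empirical process on a dyadic interval. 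For nodes with $(l,k) \notin \cT_{\text{int}}^*$, formula \eqref{medest} gives $\hat f_{\cT^*,lk}=0$, so $|\hat f_{\cT^*,lk}-f_{0,lk}|=|f_{0,lk}|$, and the conclusion of Lemma \ref{median_tree_prop} (second bullet) furnishes, on its event $\mathcal{E}$, the desired bound $|f_{0,lk}|< B\log n/\sqrt{n}$.

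For the interior--node case with $\ell_0(n)<l\le L_{\max}$, I would apply Bernstein's inequality to each of the two counts $N_X(I_{(l+1)(2k')})$, using the variance bound $n P_0(I_{(l+1)(2k')})\lesssim n\cdot 2^{-l}$ coming from $f_0$ being upper bounded. Choosing a threshold of order $\sqrt{n\cdot 2^{-l}\log n}+\log n$ with a large enough constant yields
\[
|\hat f_{\cT^*,lk}-f_{0,lk}| \;\lesssim\; 2^{l/2}\Bigl(\sqrt{\tfrac{2^{-l}\log n}{n}} + \tfrac{\log n}{n}\Bigr) \;=\; \sqrt{\tfrac{\log n}{n}}+\tfrac{2^{l/2}\log n}{n},
\]
with probability at least $1-n^{-c}$ for $c>0$ as large as desired. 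A union bound over the $\lesssim 2^{L_{\max}+1}\lesssim n/\log^2 n$ pairs $(l,k)$ preserves an overall probability $\to 1$. Since $2^{l/2}\le 2^{L_{\max}/2}\lesssim \sqrt{n/\log^2 n}$ for $l\le L_{\max}$, the second summand is $O(1/\sqrt{n})$, so the whole bound collapses to $O(\sqrt{\log n/n})=O(\log n/\sqrt n)$, which is actually slightly sharper than required.

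The residual regime $l>L_{\max}$ is handled without any probabilistic argument: trees from the prior lie in $\mathbb{T}_n$, hence $d(\cT^*)\le L_{\max}$ and $\hat f_{\cT^*,lk}=0$, so it suffices to control $|f_{0,lk}|$. Since $f_0 \in \Sigma(\alpha,K)$ with $\alpha>0$, standard Haar-coefficient bounds give $|f_{0,lk}|\lesssim 2^{-l(1/2+\alpha)}\lesssim (\log^2 n/n)^{1/2+\alpha}=o(\log n/\sqrt n)$. Combining the three cases on the intersection of $\mathcal{E}$ and the Bernstein event yields the claimed uniform rate.

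The main obstacle, modest as it is, is the union bound: one must ensure that Bernstein's inequality produces a tail decaying faster than $1/n^{1+\delta}$ uniformly over $\lesssim n$ dyadic intervals of varying sizes, which forces the logarithmic factor inside the deviation threshold. Everything else is essentially bookkeeping once Lemma \ref{median_tree_prop} provides the key control on which nodes the median tree does and does not select.
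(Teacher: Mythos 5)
Your proof is correct and follows essentially the same route as the paper: split according to whether $(l,k)$ lies in $\cT^*_{\text{int}}$, use the contrapositive of the second bullet of Lemma~\ref{median_tree_prop} (on $\mathcal{E}$) to bound $|f_{0,lk}|$ for the excluded nodes, and control the empirical--process fluctuations $|(P_n-P_0)\psi_{lk}|$ for the included nodes via a Bernstein deviation bound with a $\sqrt{\log n}$--inflated threshold.

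Two cosmetic differences from the paper's proof are worth flagging. First, instead of re-deriving a fresh Bernstein inequality and union bound, the paper simply works on the already-established event $\mathcal{B}_M$ from Lemma~\ref{obs_event}, whose threshold $M_{n,l}$ was designed precisely so that $2^{l/2}n^{-1}M_{n,l+1}\lesssim\sqrt{\log n/n}$ uniformly in the relevant range; this avoids redoing the tail computation and makes the dependence on the constant $M$ explicit. Second, your separate treatment of the regime $l>L_{\max}$ via Hölder decay $|f_{0,lk}|\lesssim 2^{-l(1/2+\alpha)}$ is redundant: for those $l$, $(l,k)\notin\cT^*_{\text{int}}$ holds automatically, and the contrapositive of Lemma~\ref{median_tree_prop} already gives $|f_{0,lk}|<Bn^{-1/2}\log n$ on $\mathcal{E}$ without appealing to smoothness at all (indeed the proof of Lemma~\ref{huge signals taken} shows that for $\tau=B$ large enough, no node with $l>L_n$ can satisfy $|f_{0,lk}|\geq B\log n/\sqrt n$, making the bullet vacuously true there). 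You also restrict the interior--node case to $l\leq L_{\max}$, whereas the paper uses the sharper depth bound $2^{d(\cT^*)}\leq A2^{L_n}$ from Lemma~\ref{median_tree_prop}; both suffice here since all that matters is $l\lesssim\log n$. None of these choices affects the validity of the argument.
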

\begin{proof}
Let $Q=\max_{l> \ell_0(n)} \max_k |\hat f_{\cT^*,lk}-f_{0,lk}|$. On the event $\mathcal{E}$ from Proposition \ref{median_tree_prop}, one has for $B$ as in the proposition,
\[ Q\leq\left(B\frac{\log n}{n^{1/2}}\right) \vee \max_{(l,k)\in \cT_{\text{int}}^*, l> \ell_0(n)} |\hat f_{\cT^*,lk}-f_{0,lk}|.\] Indeed, for $(l,k)\not\in \cT_{\text{int}}^*$, we necessarily have $\hat f_{\cT^*,lk}=0$ and $|f_{0,lk}|<Bn^{-1/2}\log n$ on $\mathcal{E}$.
From \eqref{medest2}, it also follows that for $A$ as in the proposition and $L_n$ defined in \eqref{cutoff}
\[ \max_{(l,k)\in \cT_{\text{int}}^*,\ l> \ell_0(n)} |\hat f_{\cT^*,lk}-f_{0,lk}| \leq \max_{(l,k),\ 2^{\ell_0(n)}<2^l < A2^{L_n}} |P_n\psi_{lk}-P_0\psi_{lk}|\eqqcolon Q_n.\]
We have that \[|P_n\psi_{lk}-P_0\psi_{lk}|\leq 2^{l/2}n^{-1}\left(|N(I_{l+1,2k})-nP_0(I_{l+1,2k})|+ |N(I_{l+1,2k+1})-nP_0(I_{l+1,2k+1})|\right).\] Therefore, on the event $\mathcal{B}_M$ from Lemma \ref{obs_event}, for some constant $C$ depending on $M,A,c_0$ and $\alpha$ only, and any $l$ as in the above supremum,
\begin{equation}\label{eq: bound empir. process}
|P_n\psi_{lk}-P_0\psi_{lk}| \leq C\sqrt{\frac{\log n}{n}}.
\end{equation} 
It follows that $Q\lesssim  n^{-1/2} \log n$ on the event $\mathcal{E}\cap\mathcal{B}_M$ that is such that $P_0\left(\mathcal{E}\cap\mathcal{B}_M\right)=1+o(1)$.
\end{proof}

\begin{lemma}\label{lemma: supnorm convergence median tree}
Let $\mathcal{T}^*$ as in \eqref{mediantree} and $\hat f_{\cT^*}$ as in \eqref{medest}. Then, for $f_0\in\mathcal{F}(\alpha, K,\mu)$,
\[ \| \hat f_{\cT^*}-f_0 \|_\infty =O_{P_0}\left(\left(\frac{\log^2{n}}{n}\right)^{\frac{\al}{2\al+1}}\right).\]
\end{lemma}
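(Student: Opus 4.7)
The plan is to decompose $\hat f_{\cT^*}-f_0$ in the Haar wavelet basis and bound the stochastic and the bias parts separately on the high-probability event $\mathcal{E}\cap\mathcal{B}_M$ furnished by Lemma \ref{median_tree_prop} and Lemma \ref{obs_event}. Since $\hat f_{\cT^*,lk}=P_n\psi_{lk}$ and $f_{0,lk}=P_0\psi_{lk}$, one has
\[
\hat f_{\cT^*}-f_0\;=\;\sum_{(l,k)\in\cT^*_{\text{int}}}(P_n-P_0)\psi_{lk}\;\psi_{lk}\;-\;\sum_{(l,k)\notin\cT^*_{\text{int}}}f_{0,lk}\,\psi_{lk},
\]
and for each fixed $l$ the functions $\psi_{lk}$ have pairwise disjoint supports, so the supremum norm of a single level-$l$ layer is exactly $2^{l/2}$ times the maximum over $k$ of the corresponding coefficients.

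For the stochastic layer, on $\mathcal{E}$ the tree depth satisfies $d(\cT^*)\le L_n$ with $2^{L_n}\asymp(n/\log n)^{1/(2\alpha+1)}$ by Lemma \ref{median_tree_prop}, while the Bernstein-type bound \eqref{eq: bound empir. process} gives $\max_{l\le L_n}\max_k|(P_n-P_0)\psi_{lk}|\lesssim\sqrt{\log n/n}$ on $\mathcal{B}_M$. Geometric summation over $l\le L_n$ then yields
\[
\Big\|\sum_{(l,k)\in\cT^*_{\text{int}}}(P_n-P_0)\psi_{lk}\,\psi_{lk}\Big\|_\infty\;\lesssim\; 2^{L_n/2}\sqrt{\frac{\log n}{n}}\;\lesssim\;\Big(\frac{\log n}{n}\Big)^{\alpha/(2\alpha+1)},
\]
which is already strictly below the target rate.

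For the bias layer I would split into three contributions. First, levels $l>L_n$ contribute at most $\sum_{l>L_n}2^{l/2}\max_k|f_{0,lk}|\lesssim\sum_{l>L_n}2^{-l\alpha}\lesssim 2^{-L_n\alpha}\lesssim(\log n/n)^{\alpha/(2\alpha+1)}$ by the H\"older estimate $|f_{0,lk}|\le C2^{-l(1/2+\alpha)}$. Second, for levels $l\le L_n$ with $(l,k)\notin\cT^*_{\text{int}}$, the second bullet of Lemma \ref{median_tree_prop} forces $|f_{0,lk}|<B\sqrt{\log^2 n/n}$ on $\mathcal{E}$, and combining this with the H\"older bound gives $|f_{0,lk}|\le\min\{B\sqrt{\log^2 n/n},\,C2^{-l(1/2+\alpha)}\}$. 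Balancing the two bounds at the oracle cut-off $L^*$ defined by $2^{L^*}\asymp(n/\log^2 n)^{1/(2\alpha+1)}$ -- i.e. using the $\sqrt{\log^2 n/n}$ bound for $l\le L^*$ and the H\"older bound for $L^*<l\le L_n$ -- produces a contribution of order $2^{L^*/2}\sqrt{\log^2 n/n}+2^{-L^*\alpha}\lesssim(\log^2 n/n)^{\alpha/(2\alpha+1)}$, which drives the final rate. Summing the three pieces and using $P_0(\mathcal{E}\cap\mathcal{B}_M)=1+o(1)$ gives the announced $O_{P_0}$ statement.

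No serious obstacle is expected: all of the heavy lifting (uniform concentration of $(P_n-P_0)\psi_{lk}$ on dyadic intervals, control of the median tree depth, and the guarantee that all sufficiently large $f_{0,lk}$ belong to $\cT^*_{\text{int}}$) has already been carried out in Lemma \ref{obs_event} and Lemma \ref{median_tree_prop}. The only computation specific to this lemma is the choice of cut-off $L^*$, which is dictated by the two competing bounds on $f_{0,lk}$ and admits no real freedom; it is also the step that explains the appearance of $\log^2 n$ rather than $\log n$ in the final rate.
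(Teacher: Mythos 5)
Your proposal is correct and matches the paper's own proof essentially step for step: same wavelet-layer decomposition, same use of Lemmas \ref{median_tree_prop} and \ref{obs_event} (events $\mathcal{E}$ and $\mathcal{B}_M$), same geometric summation for the stochastic and tail terms, and crucially the same cut-off $2^{L^*}\asymp(n/\log^2 n)^{1/(2\alpha+1)}$ balancing the $\log n/\sqrt{n}$ small-coefficient guarantee against the H\"older decay, which is precisely where the $\log^2 n$ in the final rate originates. The only cosmetic difference is that you separate the stochastic part (nodes in $\cT^*_{\text{int}}$) and the bias part (nodes not in $\cT^*_{\text{int}}$) into two explicit sums, whereas the paper carries both within one sum via an inner $\max$; the bounds and conclusion are identical.
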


\begin{proof}
Let $\mathcal{E}$ as in Lemma \ref{median_tree_prop} and $\mathcal{B}_M$ as in Lemma \ref{obs_event}. On $\mathcal{E}\cap\mathcal{B}_M$, for $M$ large enough,
\begin{align*} \norm{f_0-f_{\mathcal{T}^*}}_\infty\leq& \sum_{l:\ 2^l< A2^{L_n}} 2^{l/2} \max\left[\underset{0\leq k<2^l,\ (l,k)\in\mathcal{T}_{\text{int}}^*}{\max} |\langle f_0-f_{\mathcal{T}^*}, \psi_{lk}\rangle|,\ \underset{0\leq k<2^l,\ (l,k)\notin\mathcal{T}_{\text{int}}^*}{\max} |\langle f_{0}, \psi_{lk}\rangle|\right] \\&+ \sum_{l:\ 2^l\geq A2^{L_n}} 2^{l/2} \underset{0\leq k<2^l}{\max} \left| \langle f_0, \psi_{lk}\rangle\right|,\end{align*}
using the usual inequality for densities $h,g$, $\norm{h-g}_\infty\leq \sum_{l\geq0}2^{l/2}\underset{0\leq k<2^l}{\max} \left| \langle h-g, \psi_{lk}\rangle\right|$. Since $f_0\in\Sigma(\alpha,K)$, the second term is smaller than $2^{-\alpha L_n}=O\left((n/\log n)^{-\alpha/(2\alpha+1)}\right)$ (up to a constant depending only on $\alpha$, $K$ and the constant $A$ from Lemma  \ref{median_tree_prop}). Then, the first term can itself be upper bounded by the sum of 
\[  \sum_{l:\ 2^l< A2^{L_n}} 2^{l/2} \underset{0\leq k<2^l,\ (l,k)\in\mathcal{T}_{\text{int}}^*}{\max} |\langle f_0-f_{\mathcal{T}^*}, \psi_{lk}\rangle|\lesssim 2^{L_n/2}\sqrt{\frac{\log n}{n}} =o\left(\Big(\frac{\log^2 n}{n}\Big)^{\alpha/ (2\alpha+1)}\right),\]
where we used that the argument of \ref{eq: bound empir. process} can be extended to $l\leq \ell_0(n)$ on $\mathcal{E}\cap\mathcal{B}_M$, and the term
\[ \sum_{l:\ 2^l<A2^{L_n}} 2^{l/2} \underset{0\leq k<2^l,\ (l,k)\notin\mathcal{T}_{\text{int}}^*}{\max} |\langle f_{0}, \psi_{lk}\rangle|.\]
It remains to upper bound this last quantity. Let's introduce \[L^*=\max\left\{l: \underset{0\leq k<2^l}{\max} |\langle f_{0}, \psi_{lk}\rangle|\geq Bn^{-1/2}\log n\right\}\] which is such that $2^{L^*}\asymp\left(\frac{n^{1/2}}{\log n}\right)^{1/(\alpha+1/2)}$ since $\underset{0\leq k<2^l}{\max} |\langle f_{0}, \psi_{lk}\rangle|\lesssim2^{-l(1/2+\alpha)}$. Then, on the event $\mathcal{E}$, the term in the above display is bounded by 
\begin{align*}
\sum_{l:\ 2^l< A2^{L_n}} 2^{l/2} \left(B\frac{\log n}{\sqrt{n}}\right)\wedge\underset{0\leq k<2^l}{\max} \left|\langle f_{0}, \psi_{lk}\rangle\right|&\leq\sum_{l:\ l\leq L^*} 2^{l/2} \left(B\frac{\log n}{\sqrt{n}}\right) +\\
&\qquad\qquad\sum_{l:\ 2^{L^*}<2^l< A2^{L_n}} 2^{l/2} \underset{0\leq k<2^l}{\max} |\langle f_{0}, \psi_{lk}\rangle|\\
&\lesssim \sqrt{2^{L^*}\frac{\log^2 n}{n}} + 2^{-\alpha L^*}\lesssim \left(\frac{\log^2 n}{n}\right)^{\alpha/ (2\alpha+1)}.
\end{align*}
Combining the previous bounds leads to, on $\mathcal{E}\cap\mathcal{B}_M$, \[\norm{f_0-f_{\mathcal{T}^*}}_\infty\leq C\left(\log^2 n/n\right)^{\alpha/ (2\alpha+1)}.\]

\end{proof}

\section{Nonparametric BvM theorem} \label{sec:npbvm}

%
%
%
%

\subsection{Space $\cM_0$ and limiting Gaussian process $\cN$}

Recall the definition of the space $\cM_0$ from \eqref{aimezero}, using an `admissible' sequence $w=(w_l)_{l\geq 0}$ such that $w_l/\sqrt{l} \to \infty$ as $l \to \infty$,
\begin{equation*}
\mathcal{M}_0=\mathcal{M}_0(w)=\left\lbrace x=(x_{lk})_{l,k}\text{ ; }\lim_{l \to \infty} \max_{0\leq k < 2^l} \frac{|x_{lk}|}{w_l}=0\right\rbrace.
\end{equation*}

\noindent Equipped with the norm $\di \|x\|_{\mathcal{M}_0}= \sup_{l\geq 0}\max_{0\leq k < 2^l} |x_{lk}|/w_l$, this is a separable Banach space. In a slight abuse of notation, we  write $f \in \mathcal{M}_0$ if the sequence of its Haar wavelet coefficients belongs to that space $(\langle f, \psi_{lk}\rangle)_{l,k}\in \mathcal{M}_0$ and for a process $(Z(f),\, f\in L^2)$, we write $Z\in\cM_0$ if the sequence $(Z(\psi_{lk}))_{l,k}$ belongs to $\cM_0(w)$ almost surely.  

{\em White bridge process.} For $P$ a probability distribution on $[0,1]$,  following \cite{cn14} one defines the $P$-white bridge process, denoted by $\mathbb{G}_P$, as  the centered Gaussian process indexed by the Hilbert space $\di L^2(P)=\{f:[0,1]\to \mathbb{R}; \int_0^1f^2dP<\infty\}$ with covariance
\begin{equation}\label{covv}
E[\mathbb{G}_P(f)\mathbb{G}_P(g)]=\int_0^1(f-\int_0^1fdP)(g-\int_0^1gdP)dP.
\end{equation} 
We  denote by $\mathcal{N}$ the law induced by $\mathbb{G}_{P_0}$ (with $P_0=P_{f_0}$) on $\cM_0(w)$. The sequence $(\mathbb{G}_P(\psi_{lk}))_{l,k}$  indeed defines a tight Borel Gaussian variable in $\cM_0(w)$, by Remark 1 of \cite{cn14}.

{\em Admissible sequences $(w_l)$.} 
The main purpose of the sequence $(w_l)$ is to ensure that $(\mathbb{G}_P(\psi_{lk}))_{l,k}$ belongs to $\mathcal{M}_0$. 
We refer to \cite{cn14}, Section 2.1 and Remark 1, for more background on the choice of $(w_l)$ in the present multiscale setting, and to \cite{cn13}, Section 1.2, for a similar discussion in an Hilbert space setting where the targeted loss is the $L^2$--norm.

To establish a nonparametric Bernstein--von Mises (BvM) result, following \cite{cn14} one first finds a space $\mathcal{M}_{0}$ large enough to have convergence at rate $\sqrt{n}$ of the posterior density to a Gaussian process. One can then derive results for some other spaces $\mathcal{F}$ using continuous mapping for continuous functionals $\psi : \mathcal{M}_0 \to \mathcal{F}$.

{\em Recentering the distribution.} To establish the BvM result, one also has to find a suitable way to center the posterior distribution. A possible centering is the median tree estimator $\hat f_{\cT^*}$ as in \eqref{medest}. Other centerings are possible, typically appropriately `smoothed' versions of the empirical measure $P_n$ associated to the sample $X_1,\ldots,X_n$
\begin{equation} \label{empmeas}
P_n=\frac{1}{n}\sum_{i=1}^n \delta_{X_i}.
\end{equation}
Let us now also note that another way to write the median tree estimator \eqref{medest} is
\begin{equation} \label{medest2}
 f_{\cT^*} = 1+\sum_{(l,k)\in \cT^*_{\text{int}}} (P_n\psi_{lk})\cdot \psi_{lk},
\end{equation}
where $P_n\psi_{lk}=n^{-1}\sum_{i=1}^n \psi_{lk}(X_i)$ are the empirical wavelet coefficients, and only terms corresponding to interior nodes $(l,k)$ in the median tree $\cT^*$ are active in the sum from the last display. From this we see that the median tree estimator \eqref{medest} can also be interpreted as a smoothed (or `truncated') version of the empirical measure $P_n$ in \eqref{empmeas}, with truncation occuring along the median tree $\cT^*$. Note also that if the prior $\Pi$ has flat initialisation up to level $l_0(n)$, then all nodes $(l,k)$ with $l\le l_0(n)$ are present in the above sum over $(l,k)\in\cT^*_{\text{int}}$. 

%

%
%
%
%

\subsection{Nonparametric BvM: statement} For the following result, we work with OPTs with flat initialisation as defined in Section \ref{cs-func}. This is discussed below the next statement.

We have the following Bernstein-von Mises phenomenon for $f_0$ in H\"older balls. For $C_n$ a function to be specified, we denote by $\tau_{C_n}$ the map $\tau_{C_n}:f\to \sqrt{n}(f-C_n)$.
\begin{theorem}\label{BVM}
Let $\mathcal{N}$ denote the distribution induced on $\cM_0(w)$ by the $P_0$--white bridge  $\mathbb{G}_{P_0}$ as defined in \eqref{covv} and let $C_n=\hat f_{\cT^*}$ the median tree estimator as in \eqref{medest}.  Let $\Pi$ be an OPT prior with flat initialisation with $l_0(n)$ that verifies $\sqrt{\log{n}}\le l_0(n)\le \log{n}/\log\log{n}$, and other than that for $l>l_0(n)$ with same parameters as the prior in Theorem \ref{contraction_rate}.  
Then 
 for every $\al\in(0,1]$, for $\mu>0$, $K\geq0$ and $\eta>0$,  
$$\di \sup_{f_0 \in \cF(\alpha,K,\mu)} E_{f_0}\left[\beta_{\mathcal{M}_0(w)}(\Pi(\cdot|X)\circ \tau_{C_n}^{-1},\mathcal{N})\right]\to 0,$$
as $n \to \infty$, for the admissible sequence $w_l=l^{2+\delta}$ for some $\delta>0$. 
\end{theorem}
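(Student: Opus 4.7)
The overall strategy follows the template for nonparametric Bernstein--von Mises theorems in weak multiscale sequence spaces introduced in \cite{cn14}, combined with the P\'olya--tree Gaussian approximation of \cite{c17}; the flat initialisation is used crucially to freeze the tree at coarse scales and thereby reduce those scales to a classical P\'olya tree. Writing $\tau_{C_n}(f) = \sqrt{n}(f - \hat f_{\cT^*})$, the bounded Lipschitz distance on $\cM_0(w)$ may be decomposed, in view of the definition of $\|\cdot\|_{\cM_0(w)}$, into a contribution from the finite--dimensional marginal at low frequencies $l \le l_0(n)$, plus the worst--case $\cM_0(w)$--seminorm of the tail at $l > l_0(n)$ under each measure. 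The proof thus reduces to (a) negligibility of the tail at $l > l_0(n)$ under both distributions, and (b) a finite--dimensional BvM at $l \le l_0(n)$.

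For the tail bound, the Gaussian side is straightforward: each $\mathbb{G}_{P_0}(\psi_{lk})$ is a centred Gaussian of variance $\le \|f_0\|_\infty$, so by Gaussian maximal inequalities $\max_k |\mathbb{G}_{P_0}(\psi_{lk})| \lesssim \sqrt{l}$ with high probability, and dividing by $w_l=l^{2+\delta}$ yields a summable bound tending to zero uniformly in $l>l_0(n)$. The posterior side requires a case analysis based on whether $(l,k)$ belongs to $\cT_{\text{int}}$ (the random posterior tree) and/or $\cT^*_{\text{int}}$. When $(l,k)\in\cT_{\text{int}}$, the coefficient decomposes as $f_{lk}=2^{l/2}P_f(I_{lk})(1-2Y_{\epsilon(l,k)})$, and the Gaussian approximation of the posterior Beta$(a+N_X(I_{\veps0}),a+N_X(I_{\veps1}))$ variables (valid on $\mathcal{B}_M$ since $N_X(I_{lk})\gtrsim n2^{-l}$ for $l\le L_{\text{max}}$) gives $\sqrt{n}|f_{lk}-\bar f_{lk}|\lesssim\sqrt{l}$, while $|\bar f_{lk}-\hat f_{\cT^*,lk}|=O(2^l/n)=o(1)$ when $(l,k)\in\cT^*_{\text{int}}$. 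When $(l,k)\in\cT_{\text{int}}\setminus\cT^*_{\text{int}}$, Lemma \ref{median_tree_prop} yields $|f_{0,lk}|\lesssim n^{-1/2}\log n$, hence $\sqrt{n}|f_{lk}|\lesssim\log n+\sqrt{l}$ uniformly under the posterior; the cases with $(l,k)\notin\cT_{\text{int}}$ are handled analogously using $\sqrt{n}|P_n\psi_{lk}|\lesssim\sqrt{l}$. Summed over $l>l_0(n)\ge\sqrt{\log n}$ and divided by $w_l$, all contributions vanish: the calibration $l_0(n)\ge\sqrt{\log n}$ is precisely what ensures $\log n/w_{l_0(n)}\le(\log n)^{-\delta/2}\to0$.

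For the finite--dimensional BvM at $l\le l_0(n)$, the flat initialisation makes all these nodes deterministically internal, so the posterior on $(f_{lk})_{l\le l_0(n)}$ is that of a classical P\'olya tree with Beta updates given by Proposition \ref{prop1}, independently of the random tree at deeper levels. Each coefficient is a smooth polynomial function of the at most $O(2^{l_0(n)})=n^{o(1)}$ independent posterior Beta variables involved (thanks to the upper bound $l_0(n)\le\log n/\log\log n$), and a multivariate delta--method/Laplace expansion in the spirit of \cite{c17} yields joint Gaussian convergence of $\sqrt{n}(f_{lk}-P_n\psi_{lk})_{l\le l_0(n)}$ to the corresponding finite--dimensional restriction of $\mathbb{G}_{P_0}$ in bounded Lipschitz distance; note that for these levels $\hat f_{\cT^*,lk}=P_n\psi_{lk}$ since flat initialisation forces $(l,k)\in\cT^*_{\text{int}}$. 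The main obstacle is step (a) in the case $(l,k)\in\cT_{\text{int}}\setminus\cT^*_{\text{int}}$: one must perform a union bound over a random, potentially large collection of coefficients while retaining the P\'olya tree Gaussian approximation uniformly, and the precise interplay between the calibration $l_0(n)\ge\sqrt{\log n}$ and the weights $w_l=l^{2+\delta}$ is essential to absorb the $\log n$ factor produced by Lemma \ref{median_tree_prop}.
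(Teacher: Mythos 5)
Your overall plan (isolate a low--frequency finite--dimensional piece, show the high--frequency tail is negligible, and glue) is the same philosophy as the paper's proof, but the way you carve it up creates a genuine gap.

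The paper follows the standard template of \cite{cn14} literally: (i) convergence of finite--dimensional distributions at a \emph{fixed} depth $L$ (taking $n$ large enough that $\ell_0(n)>L$, so that on $l\le L$ the prior is exactly a classical P\'olya tree and the fidi BvM of \cite{c17} applies verbatim), and (ii) tightness of $\cL(\sqrt{n}(f-C_n)\mid X)$ in the slightly \emph{smaller} space $\cM_0(\bar w)$ with $\bar w_l=l^{2+\delta/2}$, obtained by separately bounding $\cL(\sqrt{n}(f-f_0)\mid X)$ and $\sqrt{n}(f_0-C_n)$, the latter via the median--tree Lemma \ref{lemma: coeff med tree} and the former via the machinery already assembled for Theorem \ref{contraction_rate}. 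Fidi convergence plus tightness in $\cM_0(\bar w)\subsetneq\cM_0(w)$ then yields convergence in $\beta_{\cM_0(w)}$ by the argument of Theorem 3 of \cite{cm21}. The key role of $\ell_0(n)\ge\sqrt{\log n}$ is then only to guarantee $\bar w_{\ell_0(n)}\gtrsim\log n$, so the $\log n/\sqrt n$ bounds at depths $>\ell_0(n)$ are absorbed.

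You instead split directly at the \emph{growing} level $l_0(n)$ and propose to prove a ``finite--dimensional'' BvM over all coefficients $l\le l_0(n)$, i.e.\ in dimension $\asymp 2^{l_0(n)}=n^{o(1)}$. This is where the argument breaks down. A multivariate delta method/Laplace expansion ``in the spirit of \cite{c17}'' gives joint Gaussian limits in fixed dimension, but does not automatically yield convergence in bounded--Lipschitz metric on $\cM_0(w)$ in a dimension growing with $n$: you would need a quantitative, dimension--uniform Gaussian approximation (e.g.\ an explicit coupling or Berry--Esseen type bound for products of Beta posteriors with error controlled by $\sqrt{n}^{-1}\text{poly}(2^{l_0(n)})$), which is an additional and nontrivial estimate that your proposal asserts rather than proves. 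The paper's fixed--$L$ fidi step avoids this entirely. Relatedly, your decomposition of $\beta_{\cM_0(w)}$ into ``fidi part at $l\le l_0(n)$ plus tail seminorm at $l>l_0(n)$'' is not a legitimate identity for the BL distance on $\cM_0(w)$: that separation is exactly what the fidi--plus--tightness lemma formalizes, and without invoking it (or reproving it) the two pieces cannot simply be added.

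A secondary issue is quantitative: your bound $|\bar f_{lk}-\hat f_{\cT^*,lk}|=O(2^l/n)$ (intended for $(l,k)\in\cT^*_{\text{int}}$) is not enough as written, since what is needed is $\sqrt{n}\,|\bar f_{lk}-\hat f_{\cT^*,lk}|/w_l\to 0$ uniformly over $l\lesssim L_n$; with $2^{L_n}\asymp (n/\log n)^{1/(1+2\alpha)}$ the quantity $\sqrt{n}\cdot 2^l/n$ is not $o(1)$ for $\alpha\le 1/2$. A more careful computation of the prior--induced bias in the Beta posterior means (keeping track of the $2^{l/2}$ normalization of $\psi_{lk}$) is required to recover a usable bound. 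I would recommend adopting the fidi--at--fixed--$L$ plus $\cM_0(\bar w)$--tightness structure, after which your tail estimates (which are essentially the right ingredients) slot into the tightness step rather than having to carry the full weight of the BL metric.
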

\begin{remark}\label{rem-l} 
Recalling that the typical nonparametric cut--off sequence $\cL$ verifies $2^\cL\asymp n^{1/(1+2\al)}$, assuming $\ell_0(n)=o(\log{n})$ amounts to say that $\ell_0(n)$ does not `interfere' with the nonparametric cut-off $\cL$. Similar choices are made in \cite{ray17},  Corollary  3.6. Other choices of sequence $\ell_0(n)$ would also be possible, up to adjusting the sequence $(w_l)$ -- one can check that it suffices to have an increasing sequence $(w_l)$ such that $w_{l_0(n)}/\log{n} \to \infty$ (see, e.g. Theorem S--3 in the Supplement of \cite{cr21}) --; we do not consider these refinements here.  
\end{remark}
Theorem \ref{BVM}  states that the posterior limiting distribution is  Gaussian after rescaling; note that, similar to the first such result recently obtained in \cite{ray17}, one slightly modifies the OPT prior to fit the  first levels by assuming a flat initialisation. This is in fact necessary for the result to hold, as otherwise the posterior would not be tight at rate $1/\sqrt{n}$ in the space $\cM_0(w)$, as was noted in the white noise model in \cite{ray17}, Proposition 3.7. Let us also briefly comment on the recentering $C_n$: as follows from the proof of Theorem \ref{BVM}, one can replace $C_n=\hat f_{\cT^*}$ by another estimator that fits all first wavelet coefficients up to $\ell_0(n)$ and  such that $\|C_n-f_0\|_{\cM_0(\bar w)}=O_{P_0}(1/\sqrt{n})$, for $\bar w$ as in that proof, see also Remark \ref{rem-center} for more on this. 

\subsection{Nonparametric BvM: implications}
Using the methods of \cite{cn14}, this result  leads to several applications. A first direct implication (this follows from Theorem 5 in \cite{cn14}) is the derivation of a confidence set in $\cM_0(w)$. Setting
\begin{equation} \label{csmw}
\cD_n =\left\{f=(f_{lk}):\  \|f-C_n\|_{\cM_0(w)} \le \frac{R_n}{\rn} \right\},
\end{equation}
where $R_n$ is chosen in such a way that $\Pi[\cD_n\given X]=1-\ga$, for some $\ga>0$ (or taking the generalised quantile for the posterior radius if the equation has no solution) leads to a set $\cD_n$ with the following properties: it is a credible set by definition which is also asymptotically a confidence set in $\cM_0(w)$ and the rescaled radius $R_n$ is bounded in probability. 
Other applications are BvM theorems for functionals, as given a continuous map $\psi:\cM_0(w)\to \cE$ for some metric space $\cE$, convergence results in $\cM_0(w)$ can be translated into convergence in $\cE$ via the continuous mapping theorem, see \cite{cn14}. This is also at the basis of the proof of the Donsker Theorem \ref{thm-Donsker}.


\section{Proof of limiting shape results}
In this section we prove the nonparametric BvM Theorem \ref{BVM} and, as a fairly direct consequence given the results of \cite{cn14}, the Bayesian Donsker Theorem \ref{thm-Donsker}. 

\begin{proof}[Proof of Theorem \ref{BVM}]
The proof is similar to the corresponding proofs for P\'olya trees or spike--and--slab P\'olya trees, so we highlight only the few differences. The proof consists in two steps. First, proving convergence of finite--dimensional distributions and second, showing tightness of the rescaled posterior in a slightly smaller space.

Regarding convergence of finite--dimensional distributions, it suffices to note that for a  fixed depth $L>0$, the prior on wavelet coefficients of levels $l\le L$  (for large enough $n$ so that $\ell_0(n)>L$) coincides with the prior induced by a standard P\'olya tree, for which the convergence of finite--dimensional distributions is shown in \cite{c17}. 

Regarding tightness, let $\bar w=(\bar w_l)$ be the sequence $\bar w_l=w_l/l^{\delta/2}=l^{2+\delta/2}$. This sequence is increasing in $l$ and verifies $\bar w_l\geqa \sqrt{l}$,  $\bar w_l=o(w_l)$ as $l\to\infty$, and $\bar w_{\ell_0(n)}\ge \log{n}$, using the assumption on $\ell_0(n)$. Now by the same argument as in the proof of Theorem 3 in \cite{cm21}, to establish the nonparametric BvM it suffices to prove that  the distribution $\cL(\sqrt{n}(f-C_n)\given X)$ is tight in $\cM_0(\bar w)$, which is true if both laws  $\cL(\sqrt{n}(f-f_0)\given X)$ and $\cL(\sqrt{n}(f_0-C_n))$ are tight. 

Focusing first on the tightness of $\cL(\sqrt{n}(f-f_0)\given X)$, we wish to show that for any $\eta\in(0,1)$, one can find $M=M(\eta)$ large enough such that
\begin{equation} \label{tight}
 E_{f_0} \Pi[\|f-f_0\|_{\cM_0(\bar{w})}>M/\sqrt{n} \given X] \le \eta.
\end{equation}
We split, for $g=f-f_0$,
\[ \|g\|_{\cM_0(\bar{w})} \le \max_{l\le \ell_0(n), k} |g_{lk}|/{\bar{w}_l}+
 \max_{l> \ell_0(n), k} |g_{lk}|/{\bar{w}_l}=:(I)+(II).
 \]
For the term (I), as noted above, since the prior has a flat initialisation up to level $\ell_0(n)$, the induced prior and posterior on the first layers $l\le \ell_0(n)$ of wavelet coefficients coincide with the prior/posterior of a standard P\'olya tree, for which the corresponding tightness is proved in \cite{c17} (proof of Theorem 3). For the term (II), it follows from the proof of Theorem \ref{contraction_rate} (noting that the proof goes through with a prior with flat initialisation) that for $T_n$ as in that proof and given $l>\ell_0(n)$, for any $\cT\in T_n$ and on the event $\cB_M$,
\[ \int  \max_{k:\, (l,k)\in\cT_{int}} |f_{lk}-f_{0,lk}| d\Pi(f\given \cT,X)
\le C \sqrt{\frac{\log{n}}{n}} \]
and
\[ \max_{k:\, (l,k)\notin {\cT_{int}}} |f_{0,lk}| \le C\frac{\log{n}}{\sqrt{n}}. \]
Since $\bar w_{\ell_0(n)}\ge \log{n}$ as verified above, one deduces that for any $\cT\in T_n$ and on $\cB_M$ the term (II) above is $O(1/\sqrt{n})$. Putting pieces together what precedes implies, with $\cE=\{f_\cT,\ \cT\in T_n\}$ as in the proof of Theorem \ref{contraction_rate},
\[ \int_{\cE} \|f-f_0\|_{\cM_0(\bar w)} d\Pi(f\given X) =O_{P_0}(1/\sqrt{n}),\]
which in turn implies \eqref{tight} using $\Pi[\cE^c\given X]=o_{P_0}(1)$. 

It remains to prove tightness of $\cL(\sqrt{n}(f_0-C_n))$ in $\cM_0(\bar{w})$. Again, one splits along indices: for $l\le \ell_0(n)$, the posterior median tree estimator has same wavelet coefficients as the empirical measure $P_n$, and the estimate 
\[ E_{P_0} \max_{l\le \ell_0(n)} \max_k |\psg P_0-P_n,\psi_{lk} \psd| /{\bar w_l}\le C/\sqrt{n} \]
follows from the proof of Theorem 1 in \cite{cn14} (see equation (36) there and lines below). For $l>\ell_0(n)$, one  invokes the properties of the median tree estimator, namely
\begin{equation}\label{tr-tight}
\max_{l> \ell_0(n)} \max_k |\hat f_{\cT^*,lk}-f_{0,lk}| =O_{P_0}\left(\frac{\log{n}}{\sqrt{n}}\right),
\end{equation}
as in Lemma \ref{lemma: coeff med tree}, noting that the argument in that proof is unchanged for a prior with flat initialisation. This gives, using again $\bar w_{\ell_0(n)}\ge \log{n}$, that 
 \[ \max_{l\le \ell_0(n)} \max_k |\hat f_{\cT^*,lk}-f_{0,lk}| = O_{P_0}(1/\sqrt{n}),\]
 which gives the desired tightness property and concludes the proof. 
 \end{proof}

\begin{remark} \label{rem-center}
It follows from the proof of Theorem \ref{BVM} that there is quite some flexibility in the choice of the centering $C_n$. For instance, the projection $P_n(L_n)$ of the empirical measure $P_n$ onto the first $L_n$  levels of wavelet coefficients, with $L_n$ the oracle supremum--norm cut--off $(n/\log{n})^{1/(2\al+1)}$ can be used. This is because for $l\le \ell_0(n)$ the projection $P_n(L_n)$ has by definition same wavelet coefficients as the empirical measure $P_n$, while for $l>\ell_0(n)$ equation \eqref{tr-tight} holds for $\psg P_n(L_n), \psi_{lk}\psd$ instead of $f_{\cT^*, lk}$ (with the even better bound $O_{P_0}(\sqrt{\log{n}/n})$), as in the proof of Theorem 1 in \cite{cn14}.
\end{remark}

\begin{proof}[Proof of Theorem \ref{thm-Donsker}]
The results follows by applying Theorem 4 in \cite{cn14}: since the posterior distribution on $f$ satisfies the nonparametric BvM theorem \ref{BVM}, it suffices to check that the sequence $(w_l)$ satisfies the condition $\sum_l w_l2^{-l/2}<\infty$, which clearly holds,  and to note that the centering $C_n=f_{\cT^*}$ belongs to $L^2$. This shows that the Bayesian Donsker holds with centering $\hat F_n^{med}=\int_0^\cdot f_{\cT^*}$. By using remark \ref{rem-center}, the same result also holds with $\hat F_n^{med}$ replaced by the primitive, say $\mathbb{Z}_n(\cdot)$, of $P_n(L_n)$. But as noted in the proof of Corollary 1 in \cite{cn14} (see also Remark 9 in \cite{gn09}), we have $\|\mathbb{Z}_n-F_n\|_\infty=o_{P_0}(1/\sqrt{n})$, which implies the result with centering at $F_n$.
\end{proof}

\section{Miscellaneous}

We quickly remind that \[\Bar{Y}_\epsilon= E[Y_\epsilon\given X^{(n)}]=\frac{a+N_{X}(I_{\epsilon0})}{2a+N_X(I_\epsilon)}\] and we define $L_n$ as in \eqref{cutoff}.

\begin{lemma}
\label{obs_event}
Let $\alpha>0$, $K>0$ and $P_0$ be a distribution with a bounded density $f_0\in \Sigma(\alpha, K)$ w.r.t. Lebesgue density. Then, for any 
\[ M>\frac{1}{3}\left(\sqrt{\log 2}\sqrt{18\norm{f_0}_\infty+\log 2 }+\log 2\right),\]the event
\begin{align*}
&\mathcal{B}_M\coloneqq \Big\{\forall l\geq0, \quad \forall 0\leq k\leq 2^l-1, \\
&\qquad \qquad\qquad M^{-1}|N_X(I_{l,k})-nP_0(I_{l,k})|\leq \sqrt{\frac{n(l+L_n)}{2^l}}\vee (l+L_n)\eqqcolon M_{n,l}\Big\}
\end{align*}
is asymptotically certain under the law $P_0$ of the observations, i.e. 
\[P_0\left(\mathcal{B}_M^c\right)=o(1).\]
\end{lemma}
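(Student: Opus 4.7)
For each dyadic interval $I_{l,k}$, the count $N_X(I_{l,k})=\sum_{i=1}^n \mathds{1}_{X_i\in I_{l,k}}$ is a $\operatorname{Bin}(n,p_{l,k})$ variable with $p_{l,k}=P_0(I_{l,k})\le \|f_0\|_\infty 2^{-l}$, since $f_0$ is bounded. The plan is to apply Bernstein's inequality at each $(l,k)$ with deviation level $t=M M_{n,l}$, then take a union bound over $0\le k<2^l$ and $l\ge 0$. The definition of $M_{n,l}$ as a maximum splits naturally into two cases, and the whole argument reduces to checking that the resulting exponential decay rate beats the $2^l$ counting factor from the union bound.

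Bernstein's inequality gives
\[ P_0\bigl(|N_X(I_{l,k})-np_{l,k}|\ge t\bigr) \le 2\exp\!\left(-\frac{t^2}{2v_{l,k}+2t/3}\right), \qquad v_{l,k}\le \|f_0\|_\infty\, n 2^{-l}. \]
With $t=M M_{n,l}$, I would treat the two regimes separately. In the regime where $n2^{-l}\ge l+L_n$ (so $M_{n,l}=\sqrt{n(l+L_n)/2^l}$), one has $\sqrt{n(l+L_n)/2^l}\le n2^{-l}$, hence $2v_{l,k}+2t/3\le (2\|f_0\|_\infty+2M/3)n 2^{-l}$ and
\[ \frac{t^2}{2v_{l,k}+2t/3} \ge \frac{M^2(l+L_n)}{2\|f_0\|_\infty+2M/3}. \]
In the opposite regime (so $M_{n,l}=l+L_n$), one has $v_{l,k}\le \|f_0\|_\infty(l+L_n)$ and $2t/3=(2M/3)(l+L_n)$, giving exactly the same lower bound on the exponent. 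The key point is that both cases lead to the \emph{same} exponential rate $c(M):=M^2/(2\|f_0\|_\infty+2M/3)$.

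The union bound then yields
\[ P_0(\mathcal{B}_M^c) \le 2\sum_{l\ge 0}\sum_{k=0}^{2^l-1} e^{-c(M)(l+L_n)} \le 2e^{-c(M)L_n}\sum_{l\ge 0}(2e^{-c(M)})^l. \]
The series converges iff $c(M)>\log 2$; in that case the right-hand side is $O(e^{-c(M)L_n})=o(1)$ because $L_n\to\infty$. The condition $c(M)>\log 2$ is the quadratic inequality $M^2-(2\log 2/3)M-2\|f_0\|_\infty\log 2>0$, whose positive root is exactly $\tfrac{1}{3}(\log 2+\sqrt{\log 2}\sqrt{18\|f_0\|_\infty+\log 2})$, matching the threshold stated in the lemma.

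The computations are entirely routine; the only slightly delicate step is verifying that \emph{both} regimes of the maximum defining $M_{n,l}$ give rise to the same effective rate $c(M)$ in the Bernstein exponent, which is precisely what makes the stated form of $M_{n,l}$ natural and allows the quadratic in $M$ to pin down the sharp constant.
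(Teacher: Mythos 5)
Your proof is correct and follows the same route as the paper: Bernstein's inequality at each cell, the observation that both regimes of the maximum defining $M_{n,l}$ yield the common exponent $M^2(l+L_n)/(2\|f_0\|_\infty+2M/3)$, a union bound over all $(l,k)$, and the quadratic in $M$ that pins down the threshold. You spell out the two-regime algebra slightly more explicitly than the paper does, but there is no substantive difference.
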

\begin{proof}
According to Bernstein's inequality, for any $l\geq0,\ 0\leq k\leq 2^l-1$,
\[ P_0\left(|N_X(I_{l,k})-nP_0(I_{l,k})|>MM_{n,l}\right)\leq 2\exp\left(-\frac{M^2M_{n,l}^2/2}{nP_0(I_{l,k})(1-P_0(I_{l,k}))+MM_{n,l}/3}\right).\]
By assumption, $P_0(I_{l,k})(1-P_0(I_{l,k}))\leq \norm{f_0}_\infty 2^{-l}$. Then, whenever $M_{n,l}=l+L_n$ (which is equivalent to $l+L_n\geq n2^{-l}$) or $M_{n,l}=\sqrt{\frac{n(l+L_n)}{2^l}}$, we can further upper bound the above quantity as
\[ P_0\left(|N_X(I_{l,k})-nP_0(I_{l,k})|>MM_{n,l}\right)\leq 2\exp\left(-\frac{M^2}{2\norm{f_0}_\infty+2M/3}(l+L_n)\right).\]
Therefore,
\[ P_0\left(\mathcal{B}_M^c\right) \leq 2\sum_{l\geq 0} 2^l\exp\left(-\frac{M^2}{2\norm{f_0}_\infty+2M/3}(l+L_n)\right) =O(2^{-L_n}) = O\left(\Big(\frac{\log n}{n}\Big)^{\frac{1}{2\alpha+1}}\right),\]
the latter equality being true whenever
\[ \frac{M^2}{2\norm{f_0}_\infty+2M/3}>\log 2,\]
i.e. $M>\frac{1}{3}\left(\sqrt{\log 2}\sqrt{18\norm{f_0}_\infty+\log 2 }+\log 2\right)$.
\end{proof}

\begin{lemma}
\label{beta control}
Suppose $f_0\in \Sigma(K, \alpha)$, with $0<\alpha\leq 1$. For $M'>0$, on the event $\mathcal{B}_M$ from Lemma \ref{obs_event}, the set

\[\mathcal{A}= \underset{\epsilon: |\epsilon| < L_n}{\cap} \left\{  |\Bar{Y}_{\epsilon0} - Y_{\epsilon0} |\leq M'\sqrt{\frac{L_n}{nP_0(I_{\epsilon0})}}  \right\} \]
is such that
\[\Pi[\mathcal{A}^c\given X] \lesssim \sum_{l\leq L_n} 2^le^{-{M'}^2\log n/4}\]
\end{lemma}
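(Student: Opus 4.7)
\textbf{Proof plan for Lemma \ref{beta control}.}

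The plan is to reduce the claim to a one-dimensional concentration inequality for Beta random variables, applied to each $\epsilon$ with $|\epsilon|<L_n$, followed by a union bound. The starting point is Proposition \ref{prop1}: since the conditional posterior $\Pi[\cdot\given X,\cT]$ is a $\cT$--P\'olya tree with parameters $(a+N_X(I_\veps))$ and these parameters do not depend on $\cT$, the marginal posterior of $Y_{\veps0}$ under $\Pi[\cdot\given X]$ is exactly a $\operatorname{Beta}(a+N_X(I_{\veps0}),\, a+N_X(I_{\veps1}))$ law, with mean $\bar Y_{\veps0}$.

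First, I would invoke a standard sub-Gaussian/Bernstein-type concentration for Beta variables: there exists a universal constant $c>0$ such that for $Y\sim\operatorname{Beta}(\alpha_0,\alpha_1)$ with $\alpha_0+\alpha_1\ge 1$,
\[ P\bigl(|Y-\alpha_0/(\alpha_0+\alpha_1)|>t\bigr)\le 2\exp\!\bigl(-c\,t^2(\alpha_0+\alpha_1)\bigr). \]
Applied here with $\alpha_0+\alpha_1=2a+N_X(I_\veps)$, this yields, for any $t>0$,
\[ \Pi\bigl[|Y_{\veps0}-\bar Y_{\veps0}|>t\,\bigm|\,X\bigr]\le 2\exp\!\bigl(-c\,t^2(2a+N_X(I_\veps))\bigr). \]

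Next, I would use the event $\cB_M$ to lower-bound $N_X(I_\veps)$ by a multiple of $nP_0(I_\veps)$. Since $f_0\ge\mu$ and $|\veps|<L_n$, one has $nP_0(I_\veps)\gtrsim \mu n\,2^{-L_n}$, which dominates the deviation term $MM_{n,l}$ appearing in the definition of $\cB_M$ (both $\sqrt{n(l+L_n)/2^l}$ and $l+L_n$ are of smaller order than $n2^{-l}$ for $l<L_n$). Consequently, on $\cB_M$, $2a+N_X(I_\veps)\gtrsim nP_0(I_\veps)\ge nP_0(I_{\veps0})$, using the trivial inclusion $I_{\veps0}\subset I_\veps$. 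Plugging in $t=M'\sqrt{L_n/(nP_0(I_{\veps0}))}$ then produces
\[ t^2(2a+N_X(I_\veps))\gtrsim M'^2\,L_n\,\frac{P_0(I_\veps)}{P_0(I_{\veps0})}\gtrsim M'^2\,L_n\gtrsim M'^2\log n, \]
since $L_n$ is of the order of $\log n$ by \eqref{cutoff}. Adjusting the universal constants (or reducing to $c_0$ small enough in the definition of $L_n$) yields $c\,t^2(2a+N_X(I_\veps))\ge M'^2\log n/4$.

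Finally, a union bound over the at most $\sum_{l\le L_n}2^l$ indices $\veps$ with $|\veps|<L_n$ gives
\[ \Pi[\cA^c\given X]\le \sum_{l\le L_n}\sum_{|\veps|=l}2\exp\!\bigl(-M'^2\log n/4\bigr)\lesssim \sum_{l\le L_n}2^l\,e^{-M'^2\log n/4}, \]
which is the claimed bound. The only real obstacle is the cited Beta concentration inequality with an explicit constant $c$; this can be obtained either by a Gamma-ratio representation of the Beta followed by Bernstein's inequality on each Gamma, or directly from the Marchal--Arbel sub-Gaussian bound. The rest is a careful bookkeeping of constants to ensure the exponent dominates $M'^2\log n/4$.
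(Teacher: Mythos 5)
Your proposal is correct and follows essentially the same route as the paper's proof: reduce to a one--dimensional concentration inequality for the posterior Beta law of each $Y_{\veps0}$, use $\cB_M$ to lower--bound $2a+N_X(I_\veps)$ by a multiple of $nP_0(I_\veps)$, plug in the threshold $t=M'\sqrt{L_n/(nP_0(I_{\veps0}))}$, observe $L_n\asymp\log n$, and union--bound over the at most $\sum_{l\le L_n}2^l$ indices.

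The one substantive difference is the source of the Beta concentration. The paper invokes Lemma 6 of \cite{c17}, which requires a preliminary check (via Lemma 2 of \cite{c17}) that the posterior mean $\Bar{Y}_{\veps0}$ stays bounded away from $0$ and $1$ uniformly over $|\veps|<L_n$. Your use of a universal sub--Gaussian bound (Marchal--Arbel, or Bernstein via the Gamma--ratio representation) sidesteps that intermediate verification, since the variance proxy $\frac{1}{4(\alpha_0+\alpha_1+1)}$ depends only on the parameter sum, not on where the mean sits. This is a mild simplification rather than a different argument. Two small points to keep straight if you write this out: (i) your opening observation that the marginal posterior of $Y_{\veps0}$ is a Beta with updated parameters (rather than a $\cT$--dependent mixture) relies on the paper's representation convention in Proposition \ref{prop1}, where the $\cT$--PT posterior updates \emph{all} $Y_\veps$, including ones not active in $\cT$ -- worth stating explicitly; and (ii) neither your argument nor the paper's literally produces the advertised constant $1/4$ in the exponent with $L_n$ as defined in \eqref{cutoff}, only a constant multiple of $M'^2\log n$, but that is all that is used downstream (one takes $M'$ large), so the discrepancy is harmless.
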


\begin{proof}
This proof comes from Lemmas $4$ and $5$ of \cite{cm21}. For completeness, we give here some details of the proof.
We have already explained that \[Y_{\epsilon0} \sim \text{Beta}(a+N_X(I_{\epsilon0}), a+N_X(I_{\epsilon0})).\]
We also noticed that on the event $\mathcal{B}_M$, $N_X(I_\epsilon)\to \infty$ uniformly for all $|\epsilon|\leq L_n$ for $n\to \infty$. Therefore, for $n$ sufficiently large, $a+N_X(I_{\epsilon0}) \wedge a+N_X(I_{\epsilon0}) \geq 8$ for $|\epsilon|<L_n$. Also, under our assumptions, Lemma [2] from \cite{c17} allows us to say that, for $n$ large enough, there exist $\mu, \nu$ such that

\[0<\mu\leq \frac{a+N_X(I_{\epsilon0})}{2a+N_X(I_{\epsilon0})+N_X(I_{\epsilon1})}\leq \nu<1\]
uniformly on all $|\epsilon|< L_n$. In addition, if $i=|\epsilon|$, we have that \[2a+N_X(I_{\epsilon0})+N_X(I_{\epsilon1}) \geq N_X(I_{\epsilon0}) \geq nP_0(I_{\epsilon0}) - M\sqrt{2nL_n2^{-i}}.\]Under our assumptions on $f_0$ and $L_n$, the last bound is itself lower bounded by $nP_0(I_{\epsilon0})/2$ for $n$ large enough. As a consequence, an application of Lemma 6 from \cite{c17} gives, for $x=M'L_n^{1/2}/2$, 

\[\Pi\left[|\Bar{Y}_{\epsilon0} - Y_{\epsilon0}| > \frac{x}{\sqrt{nP_0(I_{\epsilon0})}}\Big|\ X \right] \leq De^{-x^2/4}\] for some constant $D$. Finally, a union bound helps us to conclude that
\[\Pi[\mathcal{A}\given X] \lesssim \sum_{l\leq L_n} 2^l e^{-{M'}^2\log n/4}.\]
\end{proof}

\begin{lemma}[Theorem 1.5 of \cite{article_gamma}]
\label{bound_gamma}
For any $x>0$, 
\[ a\left(\frac{x+1/2}{e}\right)^{x+1/2} \leq \Gamma(x+1) \leq b\left(\frac{x+1/2}{e}\right)^{x+1/2},\]
where $\Gamma$ is usual Gamma function, and $a=\sqrt{2e}$ and $b=\sqrt{2\pi}$ are the best possible constants.
\end{lemma}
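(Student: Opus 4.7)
The plan is to define $g(x) = \Gamma(x+1)/\left((x+1/2)/e\right)^{x+1/2}$ and to prove that $g$ is strictly monotonically increasing on $[0,\infty)$ with $g(0)=\sqrt{2e}$ and $\lim_{x\to\infty}g(x)=\sqrt{2\pi}$. The two--sided bound is then immediate, and the constants are optimal because the lower value is attained at $x=0$ while the upper value is a genuine limit, so neither $a$ can be enlarged nor $b$ reduced.

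Both endpoint values follow from direct calculation: $g(0)=\Gamma(1)/((1/2)/e)^{1/2}=\sqrt{2e}$, and Stirling's formula $\Gamma(x+1)\sim\sqrt{2\pi x}(x/e)^x$ together with $((x+1/2)/e)^{x+1/2}=(x/e)^x(1+1/(2x))^x((x+1/2)/e)^{1/2}\sim (x/e)^x\sqrt{x}$ (using $(1+1/(2x))^x\to e^{1/2}$) yields $g(x)\to\sqrt{2\pi}$. The key analytic step is monotonicity. Taking logarithms gives $(\log g)'(x)=\psi(x+1)-\log(x+1/2)$, where $\psi=\Gamma'/\Gamma$ is the digamma function, so the task reduces to proving the \emph{digamma inequality} $\psi(x+1)>\log(x+1/2)$ for every $x>0$.

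To handle this inequality I would show that $h(x):=\psi(x+1)-\log(x+1/2)$ is strictly decreasing on $[0,\infty)$ with $h(x)\to 0$ as $x\to\infty$ and $h(0)=-\gamma+\log 2>0$ (using the classical estimate $\gamma\approx 0.577<0.693\approx\log 2$), which forces $h>0$ throughout. Using the integral representations $\psi'(x+1)=\int_0^\infty ue^{-u(x+1)}/(1-e^{-u})\,du$ and $1/(x+1/2)=\int_0^\infty e^{-u(x+1/2)}\,du$, one writes $h'(x)=\int_0^\infty e^{-u(x+1)}\bigl[u/(1-e^{-u})-e^{u/2}\bigr]du$, so the monotonicity reduces to the pointwise inequality $u/(1-e^{-u})<e^{u/2}$ on $(0,\infty)$. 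This is equivalent to $u<e^{u/2}-e^{-u/2}=2\sinh(u/2)$, which holds strictly since $2\sinh(u/2)-u=u^3/24+O(u^5)>0$ by the power series of $\sinh$.

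The main obstacle is precisely the bookkeeping of $O(1/x)$ correction terms in the digamma inequality: both $\psi(x+1)$ and $\log(x+1/2)$ share the same leading $\log x$ asymptotic, so positivity is only visible in lower--order terms. Verifying this uniformly down to $x=0$ (rather than merely asymptotically) is exactly what the monotonicity argument for $h$ accomplishes, reducing the whole question to an explicit $\sinh$-type inequality that can be verified by elementary power series comparison. The optimality of the constants $\sqrt{2e}$ and $\sqrt{2\pi}$ then follows immediately from the strict monotonicity of $g$ together with its attained value at $0$ and its limit at $\infty$.
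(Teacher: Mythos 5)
Your proof is correct and self-contained. The paper itself does not supply a proof of this lemma---it simply cites it as Theorem~1.5 of \cite{article_gamma}---so there is no ``paper approach'' to compare against; what you have produced is a full argument for a result the authors treat as imported from the literature.

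The decomposition is sound: writing $g(x)=\Gamma(x+1)/\bigl((x+1/2)/e\bigr)^{x+1/2}$, the boundary values $g(0)=\sqrt{2e}$ and $\lim_{x\to\infty}g(x)=\sqrt{2\pi}$ are computed correctly, and $(\log g)'(x)=\psi(x+1)-\log(x+1/2)$ is exact (the two extra terms from differentiating $(x+1/2)\log\frac{x+1/2}{e}$ cancel). Your reduction of the monotonicity of $g$ to the digamma inequality $\psi(x+1)>\log(x+1/2)$, and then to the pointwise bound $u/(1-e^{-u})<e^{u/2}$, i.e.\ $u<2\sinh(u/2)$, via the integral representations of $\psi'$ and of $1/(x+1/2)$, is the standard and cleanest route; the final inequality is immediate from the power series of $\sinh$. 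One small remark: since the monotone, nonnegative-limit argument already forces $h>0$, checking $h(0)=\log 2-\gamma>0$ is not strictly needed, though it is a useful sanity check. Also, the lemma is stated for $x>0$, so $g(0)=\sqrt{2e}$ is a continuous boundary value (an infimum over $(0,\infty)$) rather than an attained minimum; the optimality of both constants nonetheless follows exactly as you argue from strict monotonicity together with the value at $0$ and the limit at $\infty$.
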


\end{document}